\documentclass{article}
\usepackage{graphicx} 
\newcommand{\footremember}[2]{%
    \footnote{#2}
    \newcounter{#1}
    \setcounter{#1}{\value{footnote}}%
}
\title{Infinite Horizon Mean-Field Terminal Value Problem\thanks{Research of first and second authors was supported in part by the AFOSR Grant FA9550-24-1-0152}}
\author{%
  U\u{g}ur Ayd{\i}n\footremember{ua}{Department of Electrical and Computer Engineering and the Coordinated Science Laboratory, University of Illinois Urbana-Champaign, Urbana, IL, 61801, USA, uaydin2@illinois.edu}%
  \and Tamer Ba\c{s}ar\footremember{tb}{Department of Electrical and Computer Engineering and the Coordinated Science Laboratory, University of Illinois Urbana-Champaign, Urbana, IL, 61801, USA, basar1@illinois.edu}%
  \and Naci Saldi\footremember{ns}{Department of Mathematics, Bilkent University, \c{C}ankaya, Ankara, 06800, TURKEY, naci.saldi@bilkent.edu.tr}%
}
\date{}
\usepackage[a4paper]{geometry}
\usepackage[english]{babel}
\usepackage{a4wide}
\usepackage{indentfirst}
\usepackage{amsthm}
\usepackage{amsmath}
\usepackage{a4wide}
\usepackage{amstext}
\usepackage{amssymb}
\usepackage{tikz-cd}
\usepackage{float}
\usepackage[most]{tcolorbox}

\newtheorem{definition}{Definition}
\newtheorem{assumption}{Assumption}
\newtheorem{lemma}{Lemma}
\newtheorem{remark}{Remark}
\newtheorem{theorem}{Theorem}

\newtheorem{proposition}{Proposition}
\newtheorem{convention}{Convention}

\AtBeginEnvironment{algorithm}{\SetInd{.5em}{1em}}
\usepackage[boxruled]{algorithm2e}
\usepackage{algorithmic}

\usepackage{biblatex}
\bibliography{whole}
\usepackage{mathrsfs}
\usepackage{hyperref}
\usepackage{csquotes}
\begin{document}

\maketitle

\begin{abstract}
We introduce a class of finite- and infinite-horizon discrete-time control problems for studying macroscale systems in discrete time, which we refer to as mean-field terminal value problems (MFTVPs). An MFTVP takes a terminal \(Q\)-function as input and, starting from this terminal \(Q\)-function, seeks an indefinite backward evolution of optimal policies and state measures, as required for an infinite-horizon non-stationary mean-field equilibrium (MFE) of a discrete-time mean-field game (MFG). We study the existence and uniqueness properties of this problem.
As an application, using the uniqueness properties of MFTVPs, we show that, in the average-cost setting (and in the discounted setting for sufficiently small discount factors), MFTVPs can be used as an intermediate step to develop approximation schemes between finite-horizon MFEs and average-cost MFEs (respectively, stationary MFEs). In contrast to approaches that use infinite-horizon MFGs as intermediate problems, MFTVPs can be employed when the discount factor is sufficiently small (or, in the average-cost setting, when the drift factor is sufficiently small), together with a regularizer, to approximate a stationary MFE. Assuming uniqueness of the solution of a MFTVP, the restrictions we impose on the Lipschitz parameters of the system are weaker than the contraction conditions available for regularized stationary MFGs, and our approximation scheme remains applicable even in the presence of multiple stationary MFEs.
\end{abstract}

\begin{tcolorbox}[
    colback=gray!5,
    colframe=gray!60,
    boxrule=0.5pt,
    arc=2pt,
    left=6pt,
    right=6pt,
    top=5pt,
    bottom=5pt,
    title={Disclosure of AI Assistance},
    fonttitle=\bfseries
]
The first author used ChatGPT 6-Astra for language editing and to
assist in constructing the numerical examples presented in this
manuscript.
\end{tcolorbox}

\section{Introduction}
\subsection{Motivation and the Formulation of the Problem}
In the presence of a large number of players, equilibrium computation for Markov games is known to be intractable in general. Mean-field games (MFGs) provide a somewhat tractable framework for approximating the equilibria of Markov games when the players are \emph{homogeneous} and their interaction is \emph{weak}. Even in the discrete-time setting, the forward--backward coupling inherent in MFGs presents significant computational challenges and fundamental barriers to equilibrium computation \cite{lauriere2021numerical,lauriere2022learning}.

In this paper, we study the approximation of discrete-time infinite-horizon stationary discounted and average-cost mean-field equilibria (MFEs) under regularization via a class of extended regularized finite-horizon MFEs. Even in the regularized setting, which introduces a deviation from the true equilibria, the dynamical systems characterizing stationary equilibria are known to be contractive only when the Lipschitz parameters are sufficiently small \cite{guo2019learning,anahtarci2023q}. To approximate these equilibria {under a less restrictive contractive condition}, we study the limiting dynamical system obtained from the ``tail'' of a finite-horizon mean-field game (MFG) under a prescribed terminal $Q$-function. We refer to this limiting problem as a \emph{mean-field terminal value problem} (MFTVP).

Formally, an (infinite-horizon) MFTVP has a configuration similar to that of a discrete-time MFG \cite{SaBaRaSIAM}. It is specified by a tuple \((X,A,c,p,Q_0)\), where \(X\) is the state space, \(A\) is the action space, \(c:X\times A \times \mathcal P(X) \to \mathbb R\) is a one-stage cost function, and \(p:X\times A \times \mathcal P(X) \to \mathcal P(X)\) is a transition kernel. The additional parameter \(Q_0\) is specific to the MFTVP formulation and represents a prescribed terminal \(Q\)-function. Since our primary interest is computational, we assume throughout this work that \(X\) and \(A\) are finite sets equipped with the discrete metric.

{Unlike a standard discrete-time MFG \cite{SaBaRaSIAM}, an MFTVP is constructed by fixing a terminal \(Q\)-function and, in the infinite-horizon setting, extending the finite-horizon backward \(Q\)-function recursion indefinitely into the past. Thus, rather than evolving forward from a prescribed initial state distribution, the \(Q\)-functions evolve backward from the prescribed terminal condition \(Q_0\), with indices \(0,-1,-2,\ldots\). This indefinite backward recursion does not, in general, correspond to} the dynamic programming equations of a single global objective function for the representative agent. We therefore define an MFTVP directly through the coupled evolution of the \(Q\)-functions and population state measures \((Q_{-t},\mu_{-t})_{t\in\mathbb N}\). In particular, given a flow of state measures \((\mu_{-t})_{t=0}^{\infty}\), the \(Q\)-functions evolve backward according to backward dynamics inspired by the dynamic programming characterization of the objective function in discrete-time Markov decision processes. In particular, given a flow of state measures (representing the distribution of the population) \((\mu_{-t})_{t=0}^{\infty}\), the \(Q\)-functions \((Q_{-t})_{t=0}^{\infty}\) satisfy
\begin{equation}\label{eq:lim1}
H_1(Q_{-t},\mu_{-t-1})(x,a)=Q_{-t-1}(x,a)=c(x,a,\mu_{-t-1})+\beta\sum_{y \in X}\min_{b \in A}Q_{-t}(y,b)\,p(y \mid x,a,\mu_{-t-1}).
\end{equation}
For each \(t\in\mathbb N\), let \(\pi_{-t}:X\to\mathcal P(A)\) denote a \emph{policy} at time \(t\). Given a policy flow \((\pi_{-t})_{t\in\mathbb N}\), we seek a forward evolution of the state measures satisfying the consistency condition required for a discrete-time MFE:
\begin{equation}\label{eq:lim2}
H_2(\pi_{-t-1},\mu_{-t-1})=\mu_{-t}(\cdot)=\sum_{x \in X}\sum_{a \in A} p(\cdot \mid x,a,\mu_{-t-1})\,\pi_{-t-1}(a\mid x)\,\mu_{-t-1}(x).
\end{equation}
Unlike in the standard MFG setting, we index time by the negative integers to emphasize the backward evolution of the system. The \(Q\)-function \(Q_0\) is referred to as the terminal \(Q\)-function of the MFTVP and is a part of the problem configuration. A solution to an MFTVP is given by the following equilibrium notion.

\begin{definition}
Let \((X,A,c,p,Q_0)\) be an MFTVP. A \emph{time-reversed mean-field equilibrium} (TRMFE) obtained under a fixed terminal \(Q\)-function \(Q_{\mathrm{terminal}}\) is a family \((\pi_{-t},\mu_{-t})_{t=0}^{\infty}\) such that there exists a family of \(Q\)-functions \((Q_{-t})_{t=0}^{\infty}\), with \(Q_0 = Q_{\mathrm{terminal}}\), satisfying
\[
H_1(Q_{-t},\mu_{-t-1}) = Q_{-t-1},
\qquad
H_2(\pi_{-t-1},\mu_{-t-1}) = \mu_{-t},
\]
and \(\pi_{-t-1}(\cdot\mid x){\in \mathcal P(A)}\) is supported on the optimal actions of \(Q_{-t-1}(x,\cdot)\).
\end{definition}

{
\begin{remark}
Compared with the terminal cost problem, in which the terminal cost depends only on the state, our formulation considers a specific terminal cost that also depends on the action.
\end{remark}
}

In what follows, the terminal \(Q\)-function \(Q_0\) will be fixed, say \(Q_0 = Q_{\mathrm{terminal}}\). To emphasize the presence of a fixed terminal \(Q\)-function, we will denote an MFTVP by \((X,A,c,p,Q_{\mathrm{terminal}})\), or by \(\mathrm{MFTVP}_{Q_{\mathrm{terminal}}}\) when the tuple \((X,A,c,p)\) is fixed. Thus, the study of an MFTVP amounts to establishing the existence and uniqueness of a TRMFE under the prescribed terminal \(Q\)-function.

As mentioned, an MFTVP captures the tail behavior of a finite-horizon discrete-time MFG. In the finite-horizon setting, given an initial state measure \(\mu_{-T}\), a (forward) discrete-time MFG is described by a family \((\pi^T_{-t},\mu^T_{-t})_{t=0}^{T}\), where, for all \(t\le T\), we have
\[
\mu^T_{-t}(\cdot) = \sum_{x\in X}\sum_{a \in A}p(\cdot \mid x,a,\mu^T_{-t-1})\,\pi^T_{-t-1}(a\mid x)\,\mu^T_{-t-1}(x),
\]
and \(\pi_{-t}\) is concentrated on the optimal state-action pairs of the \(Q\)-function
\[
Q^T_{-t-1}(x,a) = H_1(Q^T_{-t},\mu^T_{-t-1})
= c(x,a,\mu^T_{-t-1}) + \beta\sum_{y\in X}\min_{b \in A}Q^T_{-t}(y,b)\,p(y \mid x,a,\mu^T_{-t-1}),
\]
for \(t\le T-1\). In the finite-horizon setting, we refer to \(Q^T_0\) as the terminal \(Q\)-function. For a (forward) MFG, one takes \(Q^T_0 \equiv 0\), which defines the objective function of the representative agent as
\[
J((\pi^T_{-t})_{t=0}^T)
=
E\left[
\sum_{t=0}^{T}
\beta^t c(x_{-T+t},a_{-T+t},\mu_{-T+t})
\right].
\]
\begin{definition}\label{def:fh}
    {A finite-horizon MFTVP is specified by a tuple \((X,A,c,p,Q_0,\mu_{-T})\). A solution consists of \(Q\)-functions \((\tilde Q^T_{-t})_{t=0}^{T}\), state measures \((\tilde\mu^T_{-t})_{t=0}^{T}\), and optimal policies \((\tilde\pi^T_{-t})_{t=0}^{T}\) satisfying
\[
\tilde Q^T_{-t-1}
=
H_1(\tilde Q^T_{-t},\tilde\mu^T_{-t-1}),
\qquad
\tilde Q^T_0=Q_0,
\]and
\[
\operatorname{supp}\tilde\pi^T_{-t}(\cdot\mid x)
\subseteq
\operatorname*{arg\,min}_{a\in A}
\tilde Q^T_{-t}(x,a).
\]The state measures evolve according to
$
\tilde\mu^T_{-t}
=
H_2(\tilde\pi^T_{-t-1},\tilde\mu^T_{-t-1}),
$
where
$
\tilde\mu^T_{-T}=\mu_{-T}.
$}
\end{definition}

\begin{remark}
    {Unlike in the infinite-horizon MFTVP, the initial state measure is fixed in a finite-horizon MFTVP.}
\end{remark}

{Even in the finite-horizon setting, a nontrivial terminal \(Q\)-function in an MFTVP need not correspond to a terminal cost because it is prescribed externally rather than derived from the representative agent’s objective function. Consequently, the resulting backward \(Q\)-recursion need not correspond to a single global objective function.

The infinite-horizon MFTVP can instead be viewed as a limit of these finite-horizon problems near their terminal times. More precisely, suppose that there exists a sequence of horizon lengths \(T_n\to\infty\) such that, for every fixed \(k\in\mathbb N\), the sequences \(\bigl(Q^{T_n}_{-k}\bigr)_n\) and \(\bigl(\mu^{T_n}_{-k}\bigr)_n\) converge. Passing to the limit for each fixed \(k\) yields an infinite family \(\bigl(Q_{-k},\mu_{-k}\bigr)_{k\in\mathbb N}\) satisfying the same coupled backward–forward relations and hence defining an infinite-horizon MFTVP.}

The primary goal of studying MFTVP is to develop algorithms for finite-horizon MFGs that converge to a stationary discounted or average-cost MFE. Indeed, it is easy to see that if, for any given terminal \(Q\)-function \(Q_0 = Q_{\mathrm{terminal}}\), there exists a unique TRMFE and \(Q_0 = Q_{-1}\), then one readily obtains a stationary MFE from the corresponding TRMFE. If the finite-horizon MFEs are constructed with terminal \(Q\)-functions satisfying \(Q_T = H_1(Q_T,\mu_T)\), then the finite-horizon MFEs on the horizons \(\{0,\ldots,T-1\}\) and \(\{0,\ldots,T\}\) start from the same terminal \(Q\)-function. Under mild assumptions on the system components \((c,p)\), this property can then be used to show that they converge to a stationary MFE. Taking this convergence for granted, the preceding program raises the following questions:
\begin{center}
\begin{enumerate}
    \item[Q1)] Under what conditions does there exist a unique TRMFE for any given terminal \(Q\)-function?
    \item[Q2)] Under what conditions can we find a finite-horizon MFE \((\pi_{-t},\mu_{-t})_{t=0}^T\) such that the terminal \(Q\)-function satisfies \(Q_0^T = H_1(Q_0^T,\mu_0^T)\)?
\end{enumerate}

\end{center}

To answer these questions, we construct a contractive mapping in the finite-horizon setting to search for a terminal \(Q\)-function satisfying
\(
Q_0 = H_1(Q_0,\mu_0)
\)
under a finite-horizon MFE. Under suitable Lipschitz continuity assumptions, we show that this mapping is contractive for any initial state measure whenever the finite-horizon MFE is contractive and the discount factor or the drift factor is sufficiently small. {In particular, provided that a unique TRMFE exists for every prescribed terminal \(Q\)-function, the finite-horizon contraction conditions, which are less restrictive than those required for stationary MFEs \cite{ayd}, yield an approximation method for stationary MFEs under weaker conditions.}

In contrast, when this smallness condition is violated, but the finite-horizon MFE remains contractive for every fixed terminal \(Q\)-function and every horizon length, we show that there exists a unique TRMFE associated with any prescribed terminal \(Q\)-function. The requirement of a sufficiently large discount factor is natural, as a strong coupling between successive \(Q\)-functions is needed to ensure that the backward evolution is uniquely determined by a given terminal \(Q\)-function.

On the other hand, when the discount factor is sufficiently small, a straightforward perturbation argument yields the desired contraction while preserving the finite-horizon MFE contraction property. Although the uniqueness of TRMFE requires a sufficiently large discount factor, constructing a terminal \(Q\)-function satisfying
\(
Q_T = H_1(Q_T,\mu_T)
\)
for arbitrarily large horizon lengths requires working in the complementary regime of sufficiently large discount factors.

Unlike Markov decision processes, discrete-time MFGs do not admit a natural contraction condition that is always satisfied by iterative algorithms. Consequently, it is desirable to develop algorithms that rely on weaker and less restrictive contraction conditions. In some settings, the contraction conditions required for the convergence of our proposed procedures are weaker than those needed to compute a stationary MFE directly, which is the main motivation for our study. From an application standpoint, our algorithms require knowledge of the underlying model and are therefore applicable to model-based learning schemes.
\subsection{Literature Review}
\paragraph{Mean-field Games} Mean-field games (MFGs) provide a powerful framework for approximating solutions to multi-agent problems with identical agents. The theory of MFGs was introduced independently by Lasry and Lions in \cite{LaLi07}, who coined the term ``mean-field games,'' and by Huang, Malham\'{e}, and Caines in \cite{HuMaCa06}, who formulated the problem from the viewpoint of stochastic dynamic games. These foundational have studied continuous-time noncooperative differential games with a large but finite number of agents, where the influence of each individual agent on the population becomes asymptotically negligible as the number of agents grows. The discrete-time counter part of MFGs that our framework relies on has been introduced in \cite{SaBaRaSIAM}. Due to the similarity of the structure of discrete-time MFGs with Markov decision processes (MDPs), adaptation of reinforcement learning techniques to compute MFE has gained attraction \cite{guo2019learning, subramanian2019reinforcement}. However, unlike MDPs \cite{hernandez2012adaptive}, discrete-time MFGs do not admit a natural global asymptotically stable equilibrium, which most learning algorithms rely on \cite{borkar1997stochastic, guo2019learning, anahtarci2023q,anahtarci2020value,cui2021approximately, subramanian2019reinforcement}. In fact, the work \cite{yardim2024mean} has demonstrated that under mere Lipschitz conditions, calculating stationary MFE belongs to the PPAD-complete complexity class, which is believed to be intractable \cite{papadimitriou1994complexity}.

\paragraph{Regularized MFGs} Due to lack of natural contraction conditions and the presence of computational complexity barriers, studying regularized MFGs has become a natural approach \cite{guo2022entropy,zhang2023learning,xie2021learning}. In our setting, regularization refers to perturbing the cost function with an appropriate regularizer to ensure that the resulting policies possess desirable properties, such as Lipschitz continuity. In the MFG literature, entropy regularization was first introduced in \cite{guo2019learning} through the use of ``soft-min'' policies. The work \cite{cui2021approximately} showed that soft-min policies are equivalent to entropy regularization in the MFG setting.

More generally, \cite{anahtarci2023q} studied a broad class of regularizers and established contraction conditions for regularized stationary MFGs in terms of the Lipschitz coefficients. These conditions require the Lipschitz parameters to be sufficiently small and deteriorate as the discount factor vanishes. Along similar lines, \cite{ayd} showed that, in the finite-horizon setting, one can establish contraction conditions for regularized non-stationary MFGs that do not deteriorate as the discount factor approaches one.

Our work is closely related to \cite{ayd}. We adapt the state-measure iteration proposed therein and extend it to an iteration of the \(Q\)-functions. Building on this analysis, we show that whenever the discount factor or the drift factor is sufficiently small, the invariance condition \(Q_0^T \equiv Q_{-1}^T\) holds under the finite-horizon MFE contraction condition of \cite[Theorem~2]{ayd}, which is less restrictive than the corresponding contraction condition for stationary MFEs established in \cite{anahtarci2023q}. Finally, we note that, beyond contraction-based methods, optimization formulations of MFGs \cite{guo2024mf} have also provided computational tools for computing non-stationary MFEs.

\paragraph{Finite-Horizon Approximations of Infinite-Horizon MFE} Our contraction conditions are established for the finite-horizon setting; therefore, one must justify the convergence of finite-horizon TRMFEs to their infinite-horizon counterpart. Unlike in Markov decision processes, proving the convergence of the value function alone \cite{kara2019robustness} is insufficient to guarantee the convergence of the MFE. For instance, in calculus of variations and game theory, it is well known that relaxed controls do not necessarily converge to the corresponding optimal controls \cite{milgrom1985distributional,braides2002gamma}. In the continuous-time setting, this finite-to-infinite-horizon convergence problem has been studied for the FBSDE formulation of MFGs under monotonicity conditions \cite{carmona2024probabilistic}. In the discrete-time setting, \cite{ayd} provides sufficient conditions, expressed in terms of Lipschitz parameters, for this convergence together with explicit convergence rates.

We emphasize that the {one-step invariance} condition \(Q_0^T \equiv Q_{-1}^T\), which is central to our approach, has no meaningful analogue in the continuous-time setting. Furthermore, monotonicity conditions are typically global in nature, whereas the condition \(Q_0^T \equiv Q_{-1}^T\) is local. Consequently, monotonicity alone cannot enforce the invariance property that we seek.

Another difficulty that arises in the infinite-horizon setting is the computational intractability of non-stationary solutions. The error bounds established in \cite{ayd} suggest that, at time step \(t\), the approximation error between finite-horizon and infinite-horizon non-stationary MFEs is of the form \(O(a^t b^T)\), where \(0 < a < 1\), \(b > 1\), and \(T\) denotes the horizon length. Consequently, even finite-horizon approximations cannot be used to efficiently compute infinite-horizon non-stationary MFEs. In contrast, stationary solutions possess a much simpler structure \cite{bacsar2026mean} and can themselves be used to approximate sufficiently long finite-horizon non-stationary MFEs \cite{bacsar2026mean}.

\subsection{Contributions}
Motivated by the structural restrictions and computational complexity barriers associated with computing stationary MFEs, we propose a method for approximating stationary MFEs via MFTVPs. The convergence of finite-horizon MFTVPs to their infinite-horizon counterpart, together with uniqueness of the latter, constitutes the primary qualitative contribution of this work. The computation of a TRMFE satisfying \(Q_0 = Q_{-1}\) is achieved by establishing a contraction condition, which constitutes our primary quantitative contribution.

Our contributions can be summarized as follows:
\begin{enumerate}
    \item It appears that, MFTVPs have not been studied previously in the literature in the infinite-horizon setting. Although finite-horizon MFGs with terminal conditions can be viewed as a special case of our problem, even in the finite-horizon setting, this problem has not been studied at the level of generality considered here. We provide existence (Theorem~\ref{thrm:a}) and uniqueness (Theorem~\ref{thrm:b}) conditions for time-reversed MFEs in the average-cost and discounted stationary settings. Since our primary motivation is to establish an approximation scheme between finite-horizon and stationary {discounted cost}/average-cost MFEs, we impose relatively restrictive assumptions to keep the exposition simple.
    \item To provide a computationally tractable method for computing a TRMFE under a terminal \(Q\)-function \(Q_0\) satisfying \(Q_0 = Q_{-1}\), we show that the infinite-horizon MFTVP can be approximated by finite-horizon MFTVPs. The resulting stability result (Theorem~\ref{thrm:1}) is then used to establish the existence of a TRMFE associated with a given terminal \(Q\)-function. We emphasize that this convergence is by no means necessary for proving existence; see, for example, \cite{SaBaRaSIAM}. While doing so, we use a trick due to Dobrushin \cite{dobrushin} to obtain improved contraction conditions that are also applicable to previous works in the literature \cite{yardim2023policy,anahtarci2020value, cui2021approximately, lauriere2021numerical}.
    \item We develop a novel approximation scheme for average-cost and {discounted cost} MFEs via finite-horizon {TR}MFEs. To this end, for each horizon length \(T\), we seek a one-step invariant terminal \(Q\)-function (i.e., \(Q_0^T = Q_{-1}^T\)) in the finite-horizon problem. This is achieved by constructing a contractive mapping that simultaneously searches for a terminal \(Q\)-function satisfying this invariance property (Theorem~\ref{thrm:3}).
Whenever there exists a unique TRMFE associated with a terminal \(Q\)-function, we show that the accumulation points of TRMFEs corresponding to terminal \(Q\)-functions satisfying \(Q_0^T = Q_{-1}^T\) are stationary MFEs. Existing approximation schemes in the literature based on (forward) MFGs require a sufficiently large discount factor or drift factor \cite{anahtarci2023q}. In contrast, our approximation scheme requires a small drift factor in the average-cost setting and a small discount factor in the discounted setting, while remaining applicable even in the presence of multiple MFEs. In either case, the required contraction conditions are weaker than that required for stationary MFGs. We provide an explicit example in Section \ref{sec:numerical-example2} to demonstrate that our algorithm converges to a stationary MFE but the previous algorithms in the literature fail to do so \cite{guo2019learning,anahtarci2020value,lauriere2022learning,lauriere2021numerical}.
\end{enumerate}

\section{Discrete-Time Mean-field Games}

Our primary motivation for introducing MFTVP is to develop approximation schemes between finite-horizon MFE and infinite-horizon stationary MFE. To this end, in this section, we review the theory of discrete-time MFGs introduced in \cite{SaBaRaSIAM}, which we will refer to as \emph{forward MFGs} whenever there is potential for confusion with MFTVPs. A forward MFG is characterized by the tuple \((X,A,c,p)\), where \(X\) is a finite state space, \(A\) is a finite action space, \(c:X \times A \times \mathcal P(X) \to \mathbb R\) is the one-stage cost function, and \(p:X \times A \times \mathcal P(X) \to \mathcal P(X)\) is the one-stage transition probability. Throughout this paper, for simplicity, we assume that \(X\) and \(A\) are finite sets. Given a fixed \(\mu_0 \in \mathcal P(X)\), representing the state distribution of the population at time \(t=0\), we denote by
\(
\mathrm{MFG}_{\mu_0} = (X,A,c,p,\mu_0)
\)
a forward discrete-time MFG.

\subsection{Discounted Discrete-Time Mean-field Games}

Let \(1>\beta>0\) be a given discount factor. As mentioned before, a discrete-time MFG is defined by the tuple $\mathrm{MFG}_{\mu_0}=(X,A,c,p,\mu_0)$. Informally, an MFG represents the limiting dynamics obtained from a homogeneous \(N\)-player game with weak interactions from the perspective of a representative player \cite{LaLi07,HuMaCa06}. The given input \(\mu_0\) represents the initial state distribution of a continuum of players obtained from the limiting game obtained from some \(N\)-player game. 

A solution to an infinite-horizon $\mathrm{MFG}_{\mu_0}$ is a pair consisting of a state-measure flow $\pmb \mu=(\mu_0,\mu_1,\cdots) \in \prod_{t=0}^{\infty}\mathcal P(X)$ and a policy flow $\pmb \pi = (\pi_0,\pi_1,\pi_2,\cdots)\in \prod_{t=0}^{\infty}\mathcal P(A)^X$ such that
\begin{equation}\label{eq:const1}
\mu_{t+1}(\cdot)=H_2(\pi_t,\mu_t)(\cdot) := \sum_{x \in X} \sum_{a \in A} p(\cdot \mid x,a,\mu_t)\,\pi_t(da\mid x)\,\mu_t(dx),
\end{equation}
and the corresponding objective function $J$ of the representative agent given by
\begin{equation}\label{eq:const3}
J(\tilde{\pmb \pi},\pmb \mu) = E_{\tilde{\pmb \pi}}\!\left[ \sum_{t=0}^\infty \beta^t c(x_t,a_t,\mu_t) \right],
\end{equation}
satisfying the inequality
\begin{equation}\label{eq:const2}
J(\tilde{\pmb \pi},\pmb \mu) \ge J(\pmb \pi,\pmb \mu)
\end{equation}
for any admissible policy $\tilde{\pmb \pi}$. In \eqref{eq:const3}, the evolutions of the states and actions are given by
\[
x_0 \sim \mu_0, \, a_t \sim \widetilde \pi_t(\cdot \mid x_t), \, x_{t+1} \sim p(\cdot \mid x_t,a_t,\mu_t).
\]
Informally, an MFE optimizes the objective of a representative player subject to a state-measure evolution that must be satisfied by the population distribution, for which the weak interaction assumption is crucial \cite{SaBaRaSIAM}. If the pair $(\pmb \pi, \pmb \mu)$ satisfies the constraints \eqref{eq:const1} and \eqref{eq:const2}, we call it a \emph{(discounted) mean-field equilibrium} (MFE). We denote the set of mean-field equilibria associated with $\mathrm{MFG}_{\mu_0}$ by $\mathrm{MFE}_{\mu_0}$ in the infinite-horizon non-stationary setting. Due to the required interaction between state measures and optimal policies, an MFE is neither a game equilibrium in the traditional sense nor the solution to a standard stochastic optimal control problem.

Given a flow of state-measures \((\mu_t)_t \in \prod_{t=0}^{\infty} \mathcal P(X)\), similarly to Markov decision processes \cite{hernandez2012discrete}, dynamic programming \cite{SaBaRaSIAM} implies that, under mild assumptions, there exists a unique family of $Q$-functions $(Q_t)_{t=0}^{\infty}$ such that
\begin{equation}\label{eq:discc}
Q_t(x,a)=c(x,a,\mu_t) + \beta \sum_{y \in X}\min_{b \in A}Q_{t+1}(y,b)\,p(y \mid x,a,\mu_t),
\end{equation}
where the expected value of $Q_0$ under the policy flow $\pmb \pi=(\pi_0,\pi_1,\cdots)$ recovers the value of the objective function $J(\pmb \pi,\pmb \mu)$ with respect to the given initial state-measure.

In the finite-horizon case, when the horizon length is $T$, one is instead interested in the objective function
\begin{equation}\label{eq:const4}
J(\tilde{\pmb \pi},\pmb \mu) = E_{\tilde{\pmb \pi}}\!\left[ \sum_{t=0}^{T} \beta^t c(x_{t},a_{t},\mu_{t}) \right].
\end{equation}
In the reinforcement learning literature \cite{cui2021approximately}, one often takes \(\beta=1\) in the finite-horizon setting. However, since we are interested in approximating infinite-horizon MFGs, our finite-horizon MFGs also depend on the same discount factor. In this case, under the dynamic programming formulation, the last $Q$-function in the recursion, $Q_{T}$, is given by
\(
Q_{T}(x,a)=c(x,a,\mu_{T}),
\)
i.e., there is no terminal cost. Usually, in the MDP setting, one is interested in terminal costs that depend only on the state. In our case, instead, we introduce a ``terminal $Q$-function'' by imposing the condition
\begin{equation}\label{eq:const5}
Q_{T}(x,a) = c(x,a,\mu_{T}) + \beta \sum_{y \in X}\min_{b \in A}Q_{\mathrm{terminal}}(y,b)\,p(y \mid x,a,\mu_{T})
\end{equation}
on $Q_{T}$ at the terminal stage. This creates an indefinite loop in the finite-horizon MFG at the terminal stage and therefore violates \eqref{eq:const4}. Consequently, an MFTVP does not necessarily admit a closed-form expression for its objective function. At this stage, the motivation for introducing the terminal \(Q\)-function is to provide a method for approximating \emph{stationary MFEs}; it is introduced purely for technical reasons to provide an additional invariant for computational purposes.

\begin{definition}
    Let \((X,A,c,p,\mu_0)\) be an infinite-horizon \(\beta\)-discounted MFG. We say that an MFE \( (\pmb \pi,\pmb \mu)\) is stationary if \(\mu_t = \mu_0\) and \(\pi_t = \pi_0\) for all \(t \in \mathbb N\).
\end{definition}

In general, when one seeks a stationary MFE, the initial state-measure \(\mu_0\) is part of the solution rather than part of the formulation. Thus, a \emph{stationary MFG} can also be referred to as the tuple \((X,A,c,p)\). For Markov decision processes, stationary solutions involve only the existence of a stationary policy in the infinite-horizon setting and are known to exist under mild assumptions \cite{hernandez2012discrete}. Such solutions are often preferred in practice due to their simpler structure. Since an MFE involves the distribution of a population, relaxing the constraint on \(\mu_0\) often breaks the formulation of the problem. However, it has also been observed in the MFG literature that non-stationary solutions are sometimes quite difficult to compute, whereas stationary solutions are easier to compute. One example is the renewable energy model formulated as a mean-field game in \cite[Subsection 15.7.1]{bacsar2026mean2}. Nevertheless, there are also practical applications of stationary MFEs. For instance, \cite{weintraub2008markov} models an investment problem for firms that make decisions based on the average quality of the other firms. The interaction among the firms is modeled as a finite-agent stochastic game with mean-field interactions. Their notion of an ``oblivious equilibrium'' requires sampling from a distribution that is not given a priori, which fits the definition of a stationary MFE.

The introduction of the terminal $Q$-function breaks the structure of the objective functions \eqref{eq:const3} and \eqref{eq:const4} for all $T \in \mathbb N\cup\{\infty\}$. However, these ``finite-horizon MFGs'' defined under a terminal $Q$-function allow us to exploit the discrete-time structure of MFGs to establish an approximation scheme between finite-horizon MFE and stationary and average-cost MFE by treating the terminal \(Q\)-function as an invariant. To see this, first note that when the initial state-measure $\mu_0 \in \mathcal P(X)$ is fixed for a finite-horizon MFE (or, for that matter, an infinite-horizon non-stationary MFE), $\mu_0$ can be viewed as an invariant of the corresponding dynamical system
\[
\begin{bmatrix}
    Q_{t+1} \\
    \mu_{t+1}
\end{bmatrix}
=
\begin{bmatrix}
    H_1(Q_{t+2},\mu_{t+1}) \\
    H_2(Q_{t},\mu_t)
\end{bmatrix},
\qquad \forall t \ge 0.
\]
By introducing the terminal $Q$-function in \eqref{eq:const5}, we impose a new invariant on the system by means of the $Q$-functions in the finite-horizon setting.

A state-action pair $(x,a) \in X\times A$ is said to be \emph{optimal} for a function $Q:X\times A \to \mathbb R$ if
\[
a \in \arg\min_{b \in A} Q(x,b)=: \mathrm{Opt}(Q(x,\cdot)).
\]
We say that a policy $\pi:X \to \mathcal P(A)$ \emph{concentrates} on the optimal state-action pairs of a $Q$-function $Q:X\times A \to \mathbb R$ if $\pi(\cdot\mid x)$ is supported on $\mathrm{Opt}(Q(x,\cdot))$ for all $x \in X$. We say that a flow of state-measures $(\mu_t)_{t \in \mathbb N} \in \mathcal P(X)^{\mathbb N}$ \emph{induces} a $\mathrm{MFE}_{\mu_0}$ if there exists a family of policies $(\pi_t)_{t \in \mathbb N}$ such that $(\pi_t,\mu_t)_{t \in \mathbb N} \in \mathrm{MFE}_{\mu_0}$. Similarly, we say that $(\mu^T_t)_{t=0}^{T-1} \in \mathcal P(X)^{T-1}$ \emph{induces} a $\mathrm{MFE}_{T,\mu_0,Q_{\mathrm{terminal}}}$ if there exists a family of policies $(\pi^T_t)_{t=0}^{T-1}$ and a corresponding family of $Q$-functions $(Q_t)_{t=0}^{T-1}$ satisfying $Q_t=H_1(Q_{t+1},\mu^T_t)$ and $Q_{T-1}=H_1(Q_{\mathrm{terminal}},\mu^T_{T-1})$ such that, for all $t=0,\ldots,T-1$, the policy $\pi^T_t$ concentrates on the optimal state-action pairs of $Q_t$.

For a given terminal $Q$-function $Q_{\mathrm{terminal}}$, we denote by $\mathrm{MFG}_{T,\mu_0,Q_{\mathrm{terminal}}}$ the finite-horizon MFG that starts from the initial state-measure $\mu_0$ and terminates with the terminal $Q$-function $Q_{\mathrm{terminal}}$ after $T$ steps. An MFE of $\mathrm{MFG}_{T,\mu_0,Q_{\mathrm{terminal}}}$ is a flow of policies and state-measures $(\pi_t,\mu_t)_{t=0}^T$ such that there exists a family of $Q$-functions $(Q_t)_{t=0}^{T-1}$ satisfying $Q_t=H_1(Q_{t+1},\mu_t)$ and $Q_{T-1}=H_1(Q_{\mathrm{terminal}},\mu_{T-1})$, where $\pi_t$ is concentrated on the optimal state-action pairs of $Q_t$. By $\mathrm{MFE}_{T,\mu_0,Q_{\mathrm{terminal}}}$, we denote the set of MFEs of $\mathrm{MFG}_{T,\mu_0,Q_{\mathrm{terminal}}}$, which is formally defined as
\begin{align*}
\mathrm{MFE}_{T,\mu_0,Q_{\mathrm{terminal}}}
&= \bigg\{ 
(\pi_t,\mu_t)_{t=0}^T : \exists (Q_t)_{t=0}^{T-1} \text{ s.t. } Q_t =H_1(Q_{t+1},\mu_t), \, Q_{T-1}=H_1(Q_{\mathrm{terminal}},\mu_{T-1}),
\\& \qquad \qquad \qquad \qquad \qquad\mu_{t+1}=H_2(Q_{t},\mu_t), \, \mathrm{supp}\,\pi_t(\cdot \mid x)\subset \mathrm{Opt}(Q_t(x,\cdot)), \, \forall x\in X \bigg\}.
\end{align*}

\begin{convention}
We will reindex the vectors $(\pi_t,\mu_t)_{t=0}^{T-1} \in \mathrm{MFE}_{T,\mu_0,Q_{\mathrm{terminal}}}$ as $(\pi_{-t},\mu_{-t})_{t=0}^{T-1}$ by shifting the indices by $T-1$. This fixes the terminal time index at $0$, which allows us to define a finite terminal time for $\mathrm{TRMFE}_{Q_{\mathrm{terminal}}}$ while approximating it via $\mathrm{MFE}_{T,\mu_0,Q_{\mathrm{terminal}}}$.
\end{convention}

For our algorithmic purposes, we require our policies to be \emph{deterministic}. This can be achieved by transforming the MFG into an equivalent one following \cite{anahtarci2023q}. We say that a policy $\pi$ is deterministic if there exists a function $f:X \to A$ such that $\pi(\cdot|x) = \delta_{f(x)}(\cdot)$, where $\delta_{f(x)}(\{a\}) := 1_{\{f(x)\in \{a\}\}}$. To ensure that our $Q$-functions always have a unique minimizer, we lift the action space from $A$ to $\mathcal P(A)$ by defining $C:X\times \mathcal P(A)\times \mathcal P(X) \to \mathbb R$ and $P: X \times \mathcal P(A) \times \mathcal P(X) \to \mathcal P(X)$, where for \(u \in \mathcal P(A)\),
\[
C(x,u,\mu) := \sum_{a \in A}c(x,a,\mu)u(a) \text{ and } P(\cdot \mid x,u,\mu) := \sum_{a \in A} p(\cdot \mid x,a,\mu)u(a).
\]
In the setting $(X,\mathcal P(A),C,P)$, the policies obtained from $(X,A,c,p)$ correspond to deterministic policies. Let $\Omega:\mathcal P(A) \to \mathbb R$ be $1$-strongly convex with respect to the $1$-norm, i.e., for all $u,v \in \mathcal P(A)$ and $\theta \in [0,1]$, it holds that
\[
\Omega(\theta x+(1-\theta)y) \leq \theta \Omega(x) +(1-\theta)\Omega(y) - \frac 12 \theta(1-\theta) \| x-y\|^2_1.
\]
Then, for a given $\rho>0$, we define a \emph{regularized MFG} as the tuple $(X,\mathcal P(A),C+\rho\Omega,P)$. Although the MFEs obtained from the system $(X,\mathcal P(A),C+\rho\Omega,P)$ deviate from those of $(X,\mathcal P(A),C,P)$, the corresponding $Q$-functions have a unique policy that is Lipschitz continuous in the setting $(X,\mathcal P(A),C+\rho\Omega,P)$.

\begin{convention}
From now on, whenever we refer to the operators \(H_1\) and \(H_2\), we take the cost function to be \(C+\rho\Omega\) and the transition probability kernel to be \(P\).
\end{convention}

\subsection{Average-Cost Discrete-time Mean-field Games}\label{sec:avg}
    In the average-cost case (sometimes also referred to as the ergodic case), the objective function of the representative agent is given as
\begin{equation}\label{eq:const6}
J(\tilde{ \pi}, \mu) = \limsup_{T \to \infty} \frac 1TE_{\tilde{ \pi}}\!\left[ \sum_{t=0}^{T-1} c(x_t,a_t,\mu) \right]
\end{equation}
instead of the discounted objective function \eqref{eq:const4}. However, the required consistency condition for \(\mu\) stays the same:
\[
\mu ( \cdot ) = \sum_{ x\in X} \sum_{ a\in A} p(\cdot \mid x,a,\mu)\,{\tilde \pi}(a\mid x)\,\mu(x).
\]
Since the objective function \eqref{eq:const6} does not involve a discount factor, the Bellman equation \eqref{eq:const5} cannot be used to characterize the corresponding $Q$-functions. However, under additional assumptions on the dynamics, one can characterize the $Q$-functions through the so-called \emph{asymptotic discount optimality equation} \cite{hernandez2012further}.

\begin{assumption}\label{ass:acmfg}
\begin{enumerate}
\item []
\item There exists a sub-probability measure $\lambda$ on $X$ such that
\(
\inf_{x,a,\mu}p(\,\cdot \mid x,a,\mu) \ge \lambda(\,\cdot\,).
\)

\item For some $\omega :X \times A \to [1,\infty)$, there exist a non-negative constant $\alpha \in (0,1)$ (called the drift factor)
and a non-negative real number $b$ such that
\[
\sum_{y \in X} \omega _{\max}(y)\, p(y \mid  x,a,\mu) \le \alpha\, \omega(x,a) + b,
\]
where \(\omega_{\max}(x):=\max_{a\in A}\omega(x,a)\).
\end{enumerate}
\end{assumption}

The conditions in Assumption \ref{ass:acmfg} ensure that the dynamics return to the ``center'' of the state space regularly, uniformly over all population distributions, while the durations of excursions away from this region remain tightly controlled. Moreover, there exists a sufficiently large level set of \(\omega\) that satisfies a minorization condition \cite{hairer2011yet}. For our purposes, we are interested in the regime where \(\alpha\) is sufficiently small, meaning that the state dynamics are strongly pulled back toward smaller values of the weight function \(\omega\), uniformly over all population distributions. The required threshold for \(\alpha\) will be made explicit in the next section.

{
\begin{definition}\label{lem:-1}
By \(\tilde p :X \times A \times \mathcal P(X) \to \mathcal P(X)\), we denote the sub-probability kernel
\(
\tilde p(\cdot \mid x,a,\mu) = p(\cdot \mid x,a,\mu) - \lambda (\cdot).
\)
\end{definition}
}

Similar to the discounted case, we study the regularized versions of these average-cost MFGs. In particular, we define
\(
\tilde P(\cdot \mid x,u,\mu) = \sum_{a \in A} \tilde p(\cdot \mid x,a,\mu)u(a).
\)
It is straightforward to verify that \( P\) satisfies Assumption \ref{ass:acmfg} with the weight function \(\tilde \omega (x,u) = \sum_{a \in A} \omega(x,a)u(a)\) for \(u \in \mathcal P(A)\). By a slight abuse of notation, since we always work under regularization, we write \(\omega\) instead of \(\tilde \omega\) for the lifted weight function.

\begin{convention}\label{conv}
By adjusting \(\alpha\), we can always ensure that
\(
\sum_{x\in X}\sup_{a \in A}\omega(x,a)\lambda(x) = b.
\)
We assume this implicitly throughout our average-cost results. See \cite[Remark~1]{anahtarci2020value}.
\end{convention}

Under Assumption \ref{ass:acmfg}, the objective function in \eqref{eq:const6} can be characterized via the $Q$-function
\begin{equation}\label{eq:const8}
Q(x,a) = c(x,a,\mu) + \sum_{y \in X}\min_{b \in A}Q(y,b)\,\tilde p(y \mid x,a,\mu).
\end{equation}
Equivalent formulations of \eqref{eq:const8} are referred to as the \emph{{average cost} asymptotic discount optimality equations} in the Markov decision process literature. We note that, under the minorization condition in Assumption \ref{ass:acmfg}, for any given \(\mu \in \mathcal P(X)\), there exists a unique $Q$-function satisfying \eqref{eq:const8} \cite[Section 2.3]{hernandez2012adaptive}.

In the regularized setting, the discount-free variant of MFTVP in the non-stationary case concerns the $Q$-functions of the form
\[
H^{\mathrm{avg}}_1(Q_{-t+1},\mu_{-t})(x,u)=Q_{-t} (x,u) = C(x,u,\mu_{-t})+\rho\Omega(u) + \sum_{y \in X}\min_{v \in \mathcal P(A)}Q_{-t+1}(y,v)\,\tilde P(y \mid x,u,\mu_{-t}),
\]
where we seek state measures that satisfy the evolution
\begin{equation}\label{eq:const7}
H_2(\pi_{-t-1},\mu_{-t-1})(\cdot)=\mu_{-t}(\cdot) = \sum_{x \in X}\int_{u \in \mathcal P(A)} P(\cdot \mid x,u,\mu_{-t-1})\,\pi_{-t-1}(u\mid x)\,\mu_{-t-1}(x), 
\end{equation}
where $\pi_{-t}(\cdot\mid x) \in \mathcal P(\mathcal P(A))$ is supported on $\mathrm{Opt}(Q_{-t}(x,\cdot))$ for all $x \in X$. A pair \((\pi_{-t},\mu_{-t})_t\) satisfying these properties is called a mean-field equilibrium (MFE) in the average-cost setting. We note that \eqref{eq:const7} does not involve the sub-probability kernel $\tilde P$, even though \eqref{eq:const8} does.
\section{Main Results}

Let $(X,\mathcal P(A),C+\rho\Omega,P)$ be a $\rho$-regularized MFG, where $C$ is a nonnegative function (and hence so is $c$). In this section, we state our main results regarding approximating a stationary MFE for this forward MFG, which constitute the primary contributions discussed in the introduction. Most of the proofs are deferred to the appendix.

Since we have changed the system components from $(c,p)$ to $(C+\rho\Omega,P)$, there is a unique minimizer for each $Q$-function in our setting, at the cost of a deviation from the true MFE. Thus, in practice, taking \(\rho>0\) sufficiently small is more desirable. Similar to \eqref{eq:lim1} and \eqref{eq:lim2}, in the regularized setting we define
\begin{equation}\label{eq:q-it}
H_1(Q,\mu)(x,u):=C(x,u,\mu)+\rho\Omega(u)+\beta\sum_{y \in X}\min_{b \in A}Q(y,b)\,P(y \mid x,u,\mu)
\end{equation}
and
\begin{equation}\label{eq:lim4}
H_2(Q,\mu)(\cdot)=\sum_{x \in X} P(\cdot \mid x,\pi_Q(x),\mu)\,\mu(x),
\end{equation}
where $\pi_Q(x) := \delta_{\mathrm{argmin}_{u \in \mathcal P(A)}Q(x,u)}(\cdot)$. Note that, for any given $Q$-function, the policy $\pi_{H_1(Q,\mu)}$ is well defined and deterministic due to the presence of the regularizer as long as the values of \eqref{eq:q-it} are finite. With this in mind, in the regularized setting the formulations \eqref{eq:lim1}, \eqref{eq:const1}, and \eqref{eq:lim4} are consistent with one another, as we can associate a unique policy with each $Q$-function. However, to simplify the notation, throughout the remainder of this work we use only \eqref{eq:lim4} to describe the evolution of the state measures.

Before defining an appropriate domain for our $Q$-functions so that the operators $H_1$ and $H_2$ are well defined, we impose the following Lipschitz continuity assumptions on the original system components $(c,p)$:

\begin{assumption}\label{ass:1}
    \begin{enumerate}
    \item []
    \item The one-stage cost function \( c:X\times A\times \mathcal P(X) \to \mathbb R_{\ge 0} \) satisfies the following Lipschitz bound:
    \[
    \left| c(x,a,\mu) - c(\hat{x},\hat{a},\hat{\mu}) \right|
    \leq L_1 \left( {1}_{\{x \neq \hat{x}\}} + 2 \,\cdot {1}_{\{a \neq \hat{a}\}} + \|\mu - \hat{\mu}\|_{1} \right),
    \]
    for all \( x, \hat{x} \in X \), all \( a, \hat{a} \in A \), and all \( \mu, \hat{\mu} \in \mathcal P(X) \).

    \item The stochastic kernel \( p :X\times A \times \mathcal P(X) \to \mathcal P(X) \) satisfies the following Lipschitz bound:
    \[
    \left\| p(\cdot | x,a,\mu) - p(\cdot | \hat{x},\hat{a},\hat{\mu}) \right\|_{1}
    \leq K_1 \left( 1_{\{x \neq \hat{x}\}} + 2 \,\cdot 1_{\{a \neq \hat{a}\}} + \|\mu - \hat{\mu}\|_{1} \right),
    \]
    for all \( x, \hat{x} \in X \), all \( a, \hat{a} \in A \), and all \( \mu, \hat{\mu} \in \mathcal P(X) \).
\end{enumerate}
\end{assumption}

Although the assumptions above are stated for the original system components $(c,p)$, they extend to the system components $(C,P)$ in a straightforward manner; see \cite[Proposition 1]{anahtarci2023q}. Let \(M:= \sup_{(x,u,\mu) \in X\times A \times \mathcal P(X)} C(x,u,\mu)\) and \(K:=\sup_{u \in \mathcal P(A)}\Omega(u).\) Note that Assumption \ref{ass:1} implies $M<\infty$. We first construct a candidate domain of $Q$-functions for the original system $(X,A,c,p)$ in the discounted setting (with the discount factor $\beta<1$) as
\begin{align*}
\tilde{\mathcal C}_{\beta} := \Big\{
    Q^{\mathrm{disc}}:X\times A \to \mathbb R_{+}:\;
    &|Q^{\mathrm{disc}}(x,a)-Q^{\mathrm{disc}}(\tilde{x},\tilde{a})|
    \le \frac{L_1}{1 - \frac{\beta K_1}{2}}(1_{\{x \not=\tilde x\}}+2\, \cdot1_{\{a \not = \tilde a\}}),
    \\&
    \qquad\|Q^{\mathrm{disc}}\|_\infty \le \frac{M}{1-\beta}
\Big\},
\end{align*}
where $\| \cdot \|_{\infty}$ denotes the uniform norm on $X \times A$. We then lift this domain to $\mathcal P(A)$ and incorporate the regularizer $\Omega$ as follows:
\begin{align*}
\mathcal C_{\beta} :=
\Big\{
    Q : X \times \mathcal P(A) \to \mathbb R:\;
    &Q(x,u)= \sum_{a \in A} Q^{\mathrm{disc}}(x,a)u(a) + \rho\Omega(u),\;
    Q^{\mathrm{disc}} \in \tilde{\mathcal C}_{\beta}
\Big\}.
\end{align*}

\begin{convention}
   To use this construction in the average-cost case, we define a weighted norm on candidate \(Q\)-functions using the weight function introduced in Assumption \ref{ass:acmfg} as follows:
\[
\| \mathfrak q\|_{\omega} = \sup_{(x,a)\in X\times A} \frac{|\mathfrak q(x,a)|}{\omega(x,a)},
\qquad
\|Q\|_{\widetilde \omega}=\sup_{(x,u)\in X\times\mathcal P(A)}
\frac{|Q(x,u)|}{\widetilde \omega(x,u)},
\]
Then, we say that \(\mathfrak q \in \tilde{\mathcal C}_{\alpha}\) if \(\| \mathfrak q \|_{\omega} \le \frac{M}{1-\alpha }\) and
\(
| \mathfrak q(x,a)-\mathfrak q(\tilde x,\tilde a) |
\leq
\frac{L_1}{1-\frac{K_1\alpha}{2}}
\left(1_{\{x \not = \tilde x\}}+2\, \cdot1_{\{a \not = \tilde a\}} \right).
\)
Similarly, we say that \(Q \in \mathcal C_{\alpha}\) if
\(
Q(x,u)= \sum_{a \in A}\mathfrak q(x,a)u(a)+\rho\Omega(u),
\)
\(
\mathfrak q\in\tilde{\mathcal C}_{\alpha}.
\)
Here, the parameter \(\alpha\) is the drift factor introduced in Assumption \ref{ass:acmfg}.
For the discounted problem under Assumption~\ref{ass:acmfg}, replace $\alpha$ by $\alpha\beta$ in this construction and write $\tilde{\mathcal C}_{\alpha\beta}$ and $\mathcal C_{\alpha\beta}$.
\end{convention}
For any $\mu \in \mathcal P(X)$, the set $\mathcal C_{\beta}$ acts as an invariant domain for $H_1(\cdot,\mu)$. Furthermore, since each $Q$-function in $\mathcal C_{\beta}$ is the sum of a strongly convex function and an affine function over $\mathcal P(A)$, the policy $\pi_Q$ is well defined. In the appendix, we show that $(\mathcal C_{\beta},\| \cdot \|_{\infty})$ is indeed a compact space, a fact that is crucial for establishing the existence of a $\mathrm{TRMFE}_{Q_{\mathrm{terminal}}}$ for every $Q_{\mathrm{terminal}}\in\mathcal C_{\beta}$, as well as for showing that the accumulation points of the tails of $\mathrm{MFE}_{T,\mu_0,Q_{\mathrm{terminal}}}$ are indeed $\mathrm{TRMFE}_{Q_{\mathrm{terminal}}}$.

\begin{remark}
  In the average-cost case (under Assumption \ref{ass:acmfg}), by replacing $\mathcal C_{\beta}$ with $\mathcal C_{\alpha}$, one obtains an invariant domain for the operator
\[
H_1^{\mathrm{avg}}(Q,\mu)(x,u) := C(x,u,\mu)+\rho\Omega(u)+\sum_{y \in X}\min_{v \in \mathcal P(A)}Q(y,v)\,\tilde P(y \mid x,u,\mu).
\]
For the discounted problem under Assumption~\ref{ass:acmfg}, we use the same notation for
\[
H_1^{\mathrm{avg}}(Q,\mu)(x,u) := C(x,u,\mu)+\rho\Omega(u)+\beta\sum_{y \in X}\min_{v \in \mathcal P(A)}Q(y,v)\,\tilde P(y \mid x,u,\mu).
\]
Thus, $\beta=1$ gives the average-cost operator. In particular, in the average-cost case, the terminal $Q$-functions must be chosen from the set $\mathcal C_{\alpha}$. To keep the statements of the main results concise despite the technicalities associated with working with a sub-probability measure, we first present our main results for the discounted-cost case. Unless explicitly stated otherwise, all of these results also hold in the average-cost case.
\end{remark}
\begin{convention}\label{conv:5}
Let \(1>\beta>0\) be a discount factor. Corresponding to \(\beta\), we define the constants
\[
\bar L = \frac{L_1}{1-\frac{\beta K_1}{2}},
\qquad
K_{\mathrm D}:=K_1+\min\left\{1,\frac32K_1\right\}.
\]
Here, $\bar L$ is the state-measure Lipschitz coefficient of $H_1$, while
$K_{\mathrm D}$ is the state-measure Lipschitz coefficient of $H_2$ obtained
by combining the direct mean-field dependence of $P$ with the Dobrushin
coefficient of the induced Markov kernel \cite{dobrushin}. Under
Assumption~\ref{ass:acmfg}, we replace the discount factor \(\beta\) with
\(\alpha\beta\) in the definition of $\bar L$; the definition of
$K_{\mathrm D}$ is unchanged.

{In what follows, \(K_{\mathrm D}\) may be replaced by
\[
\widehat K
:=
\frac{3K_1}{2}
+
\frac{K_1L_1}
{2\rho\left(1-\frac{\beta K_1}{2}\right)};
\]see \cite{ayd,anahtarci2023q}. All results stated in terms of \(K_{\mathrm D}\) remain valid when \(K_{\mathrm D}\) is replaced by \(\widehat K\).}
\end{convention}

    Before stating our first result, we recall that convergence of policies/minimizers is a known obstacle in game theory and variational calculus. In the setting of MFGs, a way to circumvent this difficulty is to work with \emph{occupancy measures} that correspond to optimal policy and state-measure flows \cite{SaBaRaSIAM}.
    Let \(\pi\in\mathcal P(\mathcal P(A))^X\) be a stochastic kernel and \(\mu\in \mathcal P(X)\) be a state measure. These occupancy measures are defined as follows. Given a policy $\pi$ and a state-measure $\mu$, by \(\pi\otimes \mu \in \mathcal P(X\times\mathcal P(A))\) we denote the joint probability measure \(\pi(da|x)\mu(dx)\). We refer to $\pi\otimes \mu$ as an occupancy measure. Although policy flows obtained from approximations of an MFE need not converge to those obtained under an MFE, it is often feasible to show the convergence of the corresponding flow of occupancy measures. By disintegrating the limiting occupancy measure flow, one then obtains an MFE. Using this idea, we obtain our main existence and approximation result for TRMFE. 

\begin{theorem}\label{thrm:a}
     Suppose that Assumption \ref{ass:1} holds. Then, the following results hold.
    \begin{enumerate}
        \item If $Q_{\mathrm{terminal}}\in \mathcal C_{\beta}$, then the set $\mathrm{TRMFE}_{Q_{\mathrm{terminal}}}$ is non-empty.
        \item For all $T \ge0$, let $(\pi^T_{-t},\mu^T_{-t})_{t=0}^{T} \in\mathrm{MFE}_{\mathrm{T},\mu_0,Q^T_{\mathrm{terminal}}}$, where $\mu^T_{-T}=\mu_{\mathrm{initial}} \in \mathcal P(X)$ for all $T\in \mathbb N$ and $Q^T_{\mathrm{terminal}} \in \mathcal C_{\beta}$. Let $\nu \in \mathcal P(X \times \mathcal P(A))$. Extend
        \[
        (\pi^T_{-t}\otimes \mu^T_{-t})_{t=0}^{\infty} =(\cdots,\nu,\nu,\pi^T_{-T}\otimes \mu^T_{-T},\pi^T_{-T+1}\otimes \mu^T_{-T+1},\cdots,\pi^T_0\otimes \mu^T_0) \in \prod_{t=-\infty}^{0}\mathcal P(X\times \mathcal P(A)).
        \]
        If $Q^T_{\mathrm{terminal}} \to Q_{\mathrm{terminal}}$ in $(\mathcal C_{\beta},\|\cdot\|_{\infty})$, then any accumulation point of $\{(\pi^T_{-t}\otimes \mu^T_{-t})_{t=0}^{\infty}\}_{T \ge0}$ in $\prod_{t=0}^{\infty}\mathcal P(X\times \mathcal P(A))$  induces a $\mathrm{TRMFE}_{Q_{\mathrm{terminal}}}$, where each section $\mathcal P(X\times \mathcal P(A))$ is endowed with the topology of the weak convergence.
    \end{enumerate}
\end{theorem}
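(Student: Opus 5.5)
We prove part 2 first and obtain part 1 as a corollary. Taking $Q^T_{\mathrm{terminal}}\equiv Q_{\mathrm{terminal}}$ and any $\mu_{\mathrm{initial}}$, part 2 yields a TRMFE, provided two things hold: each finite-horizon set $\mathrm{MFE}_{T,\mu_0,Q_{\mathrm{terminal}}}$ is non-empty, and an accumulation point exists.

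\emph{Finite-horizon existence.} Define the map on $\mathcal P(X)^{T}$ that takes a flow $(\mu_{-t})$ to the flow obtained by two steps:
\begin{itemize}
\item[(i)] compute $Q_{-t}$ backward from $Q^T_{\mathrm{terminal}}$ via $H_1$;
\item[(ii)] push $\mu_{\mathrm{initial}}$ forward via $H_2(Q_{-t},\cdot)$.
\end{itemize}
By Assumption~\ref{ass:1}, $H_1$ is continuous in $(Q,\mu)$ on $\mathcal C_\beta$, which is an invariant domain. Thanks to the regularizer, $Q\mapsto\operatorname{argmin}_u Q(x,u)$ is continuous: strong convexity makes the minimizer Lipschitz in $Q$ under $\|\cdot\|_\infty$. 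Hence $H_2$ is continuous. Brouwer's theorem on the compact convex set $\mathcal P(X)^T$ then gives a fixed point, i.e.\ an element of $\mathrm{MFE}_{T,\mu_0,Q^T_{\mathrm{terminal}}}$.

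\emph{Existence of accumulation points.} $X$ is finite and $\mathcal P(A)$ is compact, so $\mathcal P(X\times\mathcal P(A))$ is weakly compact. By Tychonoff, the countable product is compact and metrizable, and a diagonal argument extracts a subsequence $T_n$ along which $\pi^{T_n}_{-t}\otimes\mu^{T_n}_{-t}\to\gamma_{-t}$ for every fixed $t$.

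\emph{Identifying the limit.} The $Q$-functions $Q^{T_n}_{-t}$ all lie in $\mathcal C_\beta$, which is compact in $\|\cdot\|_\infty$ (the fact shown in the appendix). Diagonalizing further, we get $Q^{T_n}_{-t}\to Q_{-t}$ for every $t$. Since the first marginal $\mu^{T_n}_{-t}$ converges to $\mu_{-t}$ (the $X$-marginal of $\gamma_{-t}$), continuity of $H_1$ yields $Q_{-t-1}=H_1(Q_{-t},\mu_{-t-1})$. Also $Q_0=\lim Q^{T_n}_{\mathrm{terminal}}=Q_{\mathrm{terminal}}$; for $T_n>t$ the relation holds at finite $n$, and the $\nu$-padding only matters for indices beyond $T_n$.

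\emph{Support condition (main obstacle).} We must show that $\gamma_{-t}$ is concentrated on $\{(x,u):u=\operatorname{argmin}Q_{-t}(x,\cdot)\}$. Write $f_Q(x)$ for the unique minimizer. The finite-$n$ occupancy measure is exactly $\delta_{f_{Q^{T_n}_{-t}}(x)}(du)\,\mu^{T_n}_{-t}(dx)$. Since $f_{Q^{T_n}_{-t}}\to f_{Q_{-t}}$ uniformly, by Lipschitz dependence of the strongly convex minimizer, testing against bounded continuous $g$ gives
\[
\sum_x g\bigl(x,f_{Q^{T_n}_{-t}}(x)\bigr)\mu^{T_n}_{-t}(x)\to\sum_x g\bigl(x,f_{Q_{-t}}(x)\bigr)\mu_{-t}(x).
\]
Hence $\gamma_{-t}=\pi_{Q_{-t}}\otimes\mu_{-t}$, and disintegration recovers the policy. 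Without regularization this step would instead require upper semicontinuity of the argmin correspondence, together with closedness of the set $\{(x,u): Q(x,u)=\min Q(x,\cdot)\}$ under weak limits.

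\emph{Consistency.} Finally, we pass to the limit in $\mu^{T_n}_{-t}=\sum_x P(\cdot\mid x,f(x),\mu^{T_n}_{-t-1})\,\mu^{T_n}_{-t-1}(x)$. Here $P$ is jointly continuous in $(u,\mu)$, and the functional is continuous with respect to weak convergence of $\gamma_{-t-1}$ jointly with convergence of the $\mu$-argument. This gives $\mu_{-t}=H_2(\pi_{-t-1},\mu_{-t-1})$. Therefore $(\pi_{Q_{-t}},\mu_{-t})_{t\ge0}$ is a TRMFE. Because the limiting objects are determined by the limiting occupancy measures, the argument applies to every accumulation point, not just the one extracted; the average-cost case is identical with $\mathcal C_\alpha$ and $\tilde P$.
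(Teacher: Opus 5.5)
Your proposal is correct and follows essentially the same route as the paper's proof (given via Theorem~\ref{thrm:1}). Both arguments use a diagonal extraction based on compactness of $(\mathcal C_\beta,\|\cdot\|_\infty)$ and $\mathcal P(X)$, pass the $H_1$ recursion to the limit by continuity, identify the limiting occupancy measures as $\pi_{Q_{-t}}\otimes\mu_{-t}$ through the Lipschitz dependence of the regularized minimizer, and pass the $H_2$ recursion to the limit. Two of your steps are things the paper's proof does not do: your Brouwer argument for non-emptiness of the finite-horizon equilibrium sets fills in a step the paper presupposes, and you start from an arbitrary accumulation point of the occupancy measures before refining the subsequence, which the paper leaves implicit.
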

\begin{proof}
    We defer the proof to Section \ref{sect:4.1}.
\end{proof}

In the result above, the target TRMFE may not be unique; thus, the finite-horizon approximations are only guaranteed along a subsequence.

The main tools behind the proof of Theorem \ref{thrm:a} are the same as those used in \cite{SaBaRaSIAM} to establish the existence of an infinite-horizon non-stationary MFE in a general setting. In particular, we use the ``continuous convergence'' of expressions of the form
\(
\int_{X} f_n(y)\mu_n(dy) \to \int_X f(y) \mu(dy),
\)
when $f_n \to f$ and $\mu_n \to \mu$; see \cite[Theorem 3.5]{serfozo1982convergence} for sufficient conditions ensuring this convergence. We use Assumption \ref{ass:1} to extract convergent subsequences of $Q$-functions, and the continuous convergence criterion allows us to preserve the relation $H_1(Q_{-t+1},\mu_{-t})=Q_{-t}$ for the limiting $Q$-functions. The convergence of the policies required to establish the evolution equation $H_2(Q_{t-1},\mu_{t-1})=\mu_t$ is handled using \cite[Lemma 11]{ayd}; see also \cite[Proposition 3.10]{SaBaRaSIAM}.

We note that Assumption \ref{ass:1} is stronger than the assumptions used in \cite{SaBaRaSIAM} to establish the existence of non-stationary MFE, and Theorem \ref{thrm:a} can be obtained under the assumptions of \cite{SaBaRaSIAM} without altering our proof. However, since our primary motivation is algorithmic, we rely on Assumption \ref{ass:1} to establish our algorithmic convergence results. Therefore, to keep the exposition simple, we state Theorem \ref{thrm:a} under Assumption \ref{ass:1}.

We also emphasize that, in our setting, the proof of Theorem \ref{thrm:a} uses finite-horizon MFE as an approximation to TRMFE. Thus, in addition to establishing existence, the proof of Theorem \ref{thrm:a} also establishes the desired convergence of finite-horizon MFE to TRMFE.

To bypass the need for a subsequence argument when using a TRMFE to approximate a stationary MFE, as is typically required in numerical approximations, a second property needed for TRMFE is uniqueness. Since an infinite-horizon MFTVP depends only on a given terminal $Q$-function, heuristically, a uniqueness result requires a strong connection between the backward iterations of the $Q$-functions. The main parameter governing the interaction between successive $Q$-functions is the discount factor, and our next result shows that there exists a ``large discount factor'' regime in which uniqueness holds, thereby confirming this heuristic.
\begin{theorem}\label{thrm:b}
    Suppose that Assumption \ref{ass:1} holds. Let \(1>\beta>0\) be a discount factor. If it holds that
    \begin{equation}\label{eq:rv1}
    K_{\mathrm D}<1,\,\frac{\bar L K_1}{\rho}<\beta(1-K_{\mathrm D})^2, \text{ and } \sqrt{\beta}\left( \sqrt{K_{\mathrm D}+ \frac{\bar LK_1}{\rho}}+\sqrt{\frac{\bar LK_1}{\rho}} \right)<1,
    \end{equation}
    then under any \(\mathrm{TRMFE}_{Q_{\mathrm{terminal}}}\) for any \(Q_{\mathrm{terminal}}\in\mathcal C_{\beta}\), {the corresponding TRMFE is unique.}
\end{theorem}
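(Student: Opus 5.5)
Take two TRMFEs with the same terminal $Q$-function $Q_0=Q_{\mathrm{terminal}}\in\mathcal C_\beta$, say $(Q_{-t},\mu_{-t})_{t\ge0}$ and $(\hat Q_{-t},\hat\mu_{-t})_{t\ge0}$. Set
\[
a_t:=\|Q_{-t}-\hat Q_{-t}\|_\infty, \qquad b_t:=\|\mu_{-t}-\hat\mu_{-t}\|_1 .
\]
The plan is to derive a pair of coupled linear recursive inequalities for $(a_t,b_t)$. Since $a_0=0$ and both sequences are uniformly bounded (all $Q$-functions lie in the compact set $\mathcal C_\beta$, and $b_t\le 2$), we then show that the only bounded solution of these inequalities is zero. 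Uniqueness of the policies is then automatic, because $\pi_Q$ is uniquely determined by $Q$ under regularization.

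\textbf{Step 1 (the $Q$-recursion).} From \eqref{eq:q-it} and Assumption~\ref{ass:1}, together with the Lipschitz bound $\bar L$ built into $\mathcal C_\beta$, I expect
\[
a_{t+1}\le \beta a_t+\bar L\, b_{t+1}.
\]
The cost contributes $L_1 b_{t+1}$, and the kernel contributes $\beta\cdot\frac{K_1}{2}\bar L\, b_{t+1}$, using that $\min_v Q(\cdot,v)$ has oscillation controlled by $\bar L$. These combine to $\bar L b_{t+1}$ by the definition of $\bar L$.

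\textbf{Step 2 (the $\mu$-recursion).} The key feature is that $\mu$ evolves forward, so $\mu_{-t}=H_2(Q_{-t-1},\mu_{-t-1})$. Using the Dobrushin-type estimate behind $K_{\mathrm D}$, and the standard Lipschitz bound $\|\pi_Q-\pi_{\hat Q}\|\le \frac{1}{\rho}\|Q-\hat Q\|_\infty$ (up to the factor coming from $1$-strong convexity of $\Omega$), I would get
\[
b_t\le K_{\mathrm D}\, b_{t+1}+\frac{K_1}{\rho}\, a_{t+1}.
\]
Substituting Step 1 gives
\[
b_t\le \Bigl(K_{\mathrm D}+\frac{\bar LK_1}{\rho}\Bigr)b_{t+1}+\frac{\beta K_1}{\rho}a_t .
\]
Together with Step 1, this is a two-dimensional linear system that runs backward in $t$ for $b$ and forward in $t$ for $a$.

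\textbf{Step 3 (ruling out nonzero bounded solutions).} Iterating Step 1 from $a_0=0$ gives
\[
a_t\le \bar L\sum_{s=1}^t \beta^{t-s}b_s .
\]
Iterating Step 2 infinitely forward is legitimate because $K_{\mathrm D}<1$ and $b$ is bounded; it gives
\[
b_t\le \frac{K_1}{\rho}\sum_{s>t}K_{\mathrm D}^{\,s-t-1}a_s .
\]
Composing the two yields $b\le \mathcal T b$ for a positive operator $\mathcal T$ on $\ell^\infty$. To conclude $b=0$, I would try geometric weights $b_t\le B\gamma^t$ and look for $\gamma$ such that $\mathcal T$ strictly contracts the weighted sup norm $\sup_t\gamma^{-t}b_t$. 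That requires $\gamma\in(\beta,1/K_{\mathrm D})$-type windows together with
\[
\frac{\bar LK_1}{\rho}\cdot\frac{1}{(\gamma-\beta)\,(1-K_{\mathrm D}\gamma)}\cdot(\text{factors})<1 .
\]
Optimizing over $\gamma$ should produce exactly a condition of the form $\sqrt{\beta}(\sqrt{K_{\mathrm D}+\bar LK_1/\rho}+\sqrt{\bar LK_1/\rho})<1$, with the condition $\frac{\bar LK_1}{\rho}<\beta(1-K_{\mathrm D})^2$ guaranteeing the admissible window for $\gamma$ is nonempty. An alternative way to organize this: the characteristic equation of the system $(a_{t+1},b_{t+1})$ in terms of $(a_t,b_t)$ is a quadratic whose roots must satisfy a dichotomy, with one root governing the mode compatible with $a_0=0$. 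The conditions in \eqref{eq:rv1} ensure that all bounded solutions with $a_0=0$ decay, and by shift-invariance of the recursion they must then vanish. Once $b\equiv0$, Step 1 gives $a\equiv0$, and so the policies coincide.

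The main obstacle is Step 3. Because the system is mixed forward–backward, a naive contraction fails. The delicate part is choosing the weighting, or the root analysis, so that the constants match \eqref{eq:rv1}; in particular, the case $\beta$ large, where the $a$-recursion barely damps, forces the square-root form of the condition.
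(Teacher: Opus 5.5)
Your route is correct and genuinely different from the paper's. Step~3 does need to be carried out explicitly, though, because its outcome is not quite what you predict. If $b_r\le B\gamma^r$ for all $r\ge1$, then $a_s\le \bar L B\gamma^{s}\frac{\gamma}{\gamma-\beta}$, and hence
\[
b_t\le \frac{\bar LK_1}{\rho}\,\frac{\gamma^2}{(\gamma-\beta)(1-K_{\mathrm D}\gamma)}\,B\gamma^t .
\]
This factor is $<1$ if and only if $F(\gamma)=\frac{\beta}{\gamma}+\frac{\bar LK_1\gamma}{\rho(1-K_{\mathrm D}\gamma)}<1$, with $F$ exactly as in Lemma~\ref{lem:BT_variational}.

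You only know that $b$ is bounded, so $\sup_t\gamma^{-t}b_t$ is finite only for $\gamma\ge1$. The admissible window is therefore $[1,K_{\mathrm D}^{-1})$, not $(\beta,K_{\mathrm D}^{-1})$. The optimal choice is $\gamma=t_*$, which lies in this window precisely when $\frac{\bar LK_1}{\rho}<\beta(1-K_{\mathrm D})^2$ (Lemma~\ref{lem:tstar-threshold}).

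The resulting requirement is $F(t_*)=\beta K_{\mathrm D}+2\sqrt{\beta\bar LK_1/\rho}<1$, not the square-root condition itself. The square-root condition implies it: with $s:=\sqrt\beta\bigl(\sqrt{K_{\mathrm D}+\bar LK_1/\rho}+\sqrt{\bar LK_1/\rho}\bigr)<1$, one checks that $\beta K_{\mathrm D}+2\sqrt{\beta\bar LK_1/\rho}=s^2+2\sqrt{\beta\bar LK_1/\rho}\,(1-s)\le s<1$. The contraction factor at $\gamma=t_*$ is then below $1$, which forces $\sup_t t_*^{-t}b_t=0$, and $a\equiv0$ follows as you say. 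Your alternative ``root analysis'' is unnecessary, and as phrased for inequalities it would not be rigorous.

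The paper obtains uniqueness through Theorem~\ref{thrm:conv} instead:
\begin{itemize}
\item It compares an arbitrary TRMFE with a finite-horizon MFE that shares the terminal $Q$-function and starts from a fixed measure at time $-T$.
\item It eliminates the measure differences to get $\widetilde{\mathcal B}_T$ acting on $Q$-differences.
\item It pairs this with the Lyapunov vector $(t_*^{T},\dots,1)$, justified by the Collatz--Wielandt identity $\rho(\widetilde{\mathcal B}_\infty)=\min F$ (Lemmas~\ref{lem:BT_variational}--\ref{lem:BT_weighted_inner}).
\item It shows the mismatch at $-T$ is damped like $t_*^{-(T-t)}$, so every TRMFE is the limit of the same finite-horizon family.
\end{itemize}

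The mechanism is the same as yours: geometric weights of ratio $t_*>1$ neutralize the uncontrolled far-past end, and the same $F$ appears. You, however, work directly on the infinite horizon and eliminate the $Q$-differences instead. You need neither truncation, nor the spectral asymptotics of $\widetilde{\mathcal B}_T$, nor the perturbation step reducing to $t_o=t_*$.

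Your argument also gives uniqueness whenever $K_{\mathrm D}<1$ and $F(\gamma)<1$ for some $\gamma\in[1,K_{\mathrm D}^{-1})$. For example, $\gamma=1$ works when $\beta+\frac{\bar LK_1}{\rho(1-K_{\mathrm D})}<1$, with no phase condition. What the paper's route buys is the explicit rate $O(t_*^{-(T-t)})$ at which finite-horizon equilibria approach the TRMFE, which the approximation scheme of Theorems~\ref{thrm:a} and~\ref{thrm:suyo} relies on.
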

\begin{proof}
    We defer the proof to Section \ref{sect:4.3}.
\end{proof}

The proof of Theorem \ref{thrm:b} relies on approximating TRMFE by finite-horizon MFE obtained under the same terminal $Q$-function. For this purpose, we study the differences between finite-horizon MFE and TRMFE using only the corresponding $Q$-functions.

\begin{remark}
The condition \eqref{eq:rv1} is an artifact of analyzing the differences between the $Q$-functions from the initial time $t=0$ to $t=-T$. If one instead carries out the same analysis using the state measures, one requires
\[
\frac{\sqrt{K_{\mathrm D} +\frac{\bar LK_1}{\rho}}}{\sqrt{K_{\mathrm D} +\frac{\bar LK_1}{\rho}}+\sqrt{\frac{\bar LK_1}{\rho}}} < \beta \text{ and } \sqrt{\beta}\left( \sqrt{K_{\mathrm D}+ \frac{\bar LK_1}{\rho}}+\sqrt{\frac{\bar LK_1}{\rho}} \right)<1
\]
to obtain the uniqueness of TRMFE. This is a stronger condition than \eqref{eq:rv1} and is proved in \cite[Theorem 5]{ayd} when \(Q_{\mathrm{terminal}} \equiv 0\).
\end{remark}

\begin{remark}
{Our arguments derive contraction conditions from the spectral radii of contraction matrices whose eigenvectors exhibit either exponential growth or exponential decay. These behaviors can be characterized by phase-transition conditions involving the discount factor, which differ depending on whether the iterations are formulated in terms of state measures or \(Q\)-functions. In particular,} if one attempts to establish the uniqueness of an infinite-horizon non-stationary MFE via $Q$-function iterations, the required uniqueness condition is
\(
\frac{\bar L K_1}{\rho}>\beta(1-K_{\mathrm D})^2,
\)
which is the exact opposite of the condition required in Theorem \ref{thrm:b}. However, this phase-shift condition differs from the one obtained through iterations of the state distributions; see \cite[Theorem 4]{ayd}.
\end{remark}

Note that the uniqueness condition in Theorem \ref{thrm:b} requires a sufficiently large discount factor. For our algorithmic purposes, where we use the TRMFE structure as an intermediate step to approximate a stationary MFE, we instead prefer a small discount factor or an equivalent condition.

\begin{theorem}\label{thrm:3}
    Suppose that Assumption \ref{ass:1} holds. Assume that
    \[
    \sqrt{\beta}\left(
    \sqrt{K_{\mathrm D}+\frac{\bar LK_1}{\rho}}
    +\sqrt{\frac{\bar LK_1}{\rho}}
    \right)<1
    \]
    and that either
    \[
    K_{\mathrm D}\geq1,
    \qquad\text{or}\qquad
    \frac{\bar L K_1}{\rho}>\beta(1-K_{\mathrm D})^2.
    \]
    Then, for any $T \in \mathbb N$ and $\mu_0 \in \mathcal P(X)$, there exists a unique $(\pi^T_{-t},\mu^T_{-t})_{t=0}^{T}\in \mathrm{MFE}_{T,\mu^T_0,Q^T_0}$ such that $Q^T_0=H_1(Q^T_0,\mu^T_0)$. Furthermore, this \(Q^T_0\) can be computed through a contractive mapping iteratively.
\end{theorem}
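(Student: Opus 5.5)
We will build a map on the product space \(\mathcal C_{\beta}\times\mathcal P(X)^{T+1}\), or more economically on a space of \(Q\)-function flows, whose fixed points are exactly the finite-horizon MFEs with a one-step invariant terminal \(Q\)-function, and show that this map is a contraction in a suitably weighted sup-norm.

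\textbf{The map.} Given a candidate flow \((Q_{-t})_{t=0}^{T}\subset\mathcal C_\beta\) with \(\mu_{-T}=\mu_0\) fixed, the map acts in three steps:
\begin{enumerate}
\item Run the forward recursion \(\mu_{-t+1}=H_2(Q_{-t},\mu_{-t})\) to produce state measures \((\mu_{-t})_{t=0}^T\).
\item Update the terminal \(Q\)-function by \(Q_0\mapsto H_1(Q_0,\mu_0)\).
\item Update the earlier \(Q\)-functions by \(Q_{-t-1}\mapsto H_1(Q_{-t},\mu_{-t-1})\).
\end{enumerate}
A fixed point is, by construction, an element of \(\mathrm{MFE}_{T,\mu_0,Q_0}\) with \(Q_0=H_1(Q_0,\mu_0)\). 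Conversely, any such MFE is a fixed point. Uniqueness of the MFE therefore follows once the map is a contraction, and Banach's theorem then supplies the iterative computation. Invariance of \(\mathcal C_\beta\) under \(H_1(\cdot,\mu)\) and completeness (indeed compactness) of \((\mathcal C_\beta,\|\cdot\|_\infty)\) are taken from the preceding discussion.

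\textbf{Lipschitz estimates.} We will use three one-step bounds.
\begin{itemize}
\item \(\|H_1(Q,\mu)-H_1(Q',\mu')\|_\infty\le \beta\|Q-Q'\|_\infty+\bar L\|\mu-\mu'\|_1\), which uses the Lipschitz bound defining \(\mathcal C_\beta\) to control the measure dependence of \(P\).
\item Strong convexity of \(\rho\Omega\) makes \(Q\mapsto\pi_Q\) Lipschitz with constant proportional to \(1/\rho\).
\item Combining this with the Dobrushin argument gives \(\|H_2(Q,\mu)-H_2(Q',\mu')\|_1\le K_{\mathrm D}\|\mu-\mu'\|_1+\frac{K_1}{\rho}\|Q-Q'\|_\infty\), possibly up to a constant of order one absorbed into the convention.
\end{itemize}
Writing \(e_t=\|Q_{-t}-Q'_{-t}\|_\infty\) and \(d_t=\|\mu_{-t}-\mu'_{-t}\|_1\), we have \(d_T=0\). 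These bounds yield a linear system of inequalities: the measure errors propagate forward with factor \(K_{\mathrm D}\) and source \(e\cdot K_1/\rho\), and the new \(Q\)-errors propagate backward with factor \(\beta\) and source \(\bar L\,d\).

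\textbf{Contraction via weights.} We will choose geometric weights \(w_t=r^{t}\) and bound \(\sup_t w_t\,\text{(new }e_t)\) by \(\kappa\,\sup_t w_t e_t\). This reduces to bounding the convolution sums \(\sum_s K_{\mathrm D}^{s}\) and \(\sum_s\beta^s\) weighted by powers of \(r\). The resulting scalar condition is
\[
\frac{\beta}{r}\;+\;\frac{\bar L K_1}{\rho}\cdot\frac{\text{geometric factors}}{(1-\beta/r)(1-rK_{\mathrm D})}<1 .
\]
This is essentially a \(2\times2\) transfer matrix with entries \(K_{\mathrm D},\,K_1/\rho,\,\bar L,\,\beta\), and the condition amounts to its spectral radius being below one. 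Optimizing over \(r\) should reproduce \(\sqrt\beta\bigl(\sqrt{K_{\mathrm D}+\bar LK_1/\rho}+\sqrt{\bar LK_1/\rho}\bigr)<1\). The terminal update \(Q_0\mapsto H_1(Q_0,\mu_0)\) contributes a self-loop with factor \(\beta\), together with a term \(\bar L d_0\).

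\textbf{The main obstacle} is the boundary behaviour. The forward measure recursion starts from \(d_T=0\), while the \(Q\)-recursion now starts from a nonfixed \(Q_0\). A uniform-in-\(T\) contraction therefore needs the weight to decay in the direction that makes both geometric series converge. Which direction is admissible depends on the sign of \(\beta(1-K_{\mathrm D})^2-\bar LK_1/\rho\), that is, on whether the dominant eigenvector of the transfer matrix grows or decays. This is exactly the phase dichotomy in the hypotheses, which are complementary to those of Theorem~\ref{thrm:b}. We would try first \(r=\sqrt{\beta/(K_{\mathrm D}+\bar LK_1/\rho)}\) and then check, under \(K_{\mathrm D}\ge1\) or \(\bar LK_1/\rho>\beta(1-K_{\mathrm D})^2\), that the feedback through the invariant terminal \(Q_0\) is dominated, so that the closed loop has gain below one.
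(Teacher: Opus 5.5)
Read with the old $Q_{-t+1}$ in your step~3, your sweep is the paper's. Together with Lemmas~\ref{lem:7} and~\ref{lem:8} it yields exactly the system $e^{\mathrm{new}}\le\mathcal B_T\,e$ of \eqref{eq:123455}, and its fixed points are the desired MFEs. The gap is the step that carries the theorem: a norm in which this system contracts uniformly in $T$ under precisely the stated hypotheses. You leave it at ``should reproduce'' and ``would try'', and the details you commit to do not deliver it.

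\emph{Direction of the weights.} With your indexing $e_t=\|Q_{-t}-Q'_{-t}\|_\infty$, the weights $w_t=r^t$ with $r=\sqrt{\beta/(K_{\mathrm D}+\bar LK_1/\rho)}<1$ decay toward the past. But $Q_{-t}$ receives measure error from every $e_j$ with $j>t$. So any weight that is nonincreasing toward the past makes the terminal row alone cost at least its row sum $\beta+\frac{\bar LK_1}{\rho}\sum_{j=0}^{T-1}K_{\mathrm D}^j$. That sum is unbounded in $T$ when $K_{\mathrm D}\ge1$, and it exceeds $1$ in admissible cases, e.g.\ $\beta=0.016$, $K_{\mathrm D}=0.05$, $\bar LK_1/\rho=12.5$, where both hypotheses hold. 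The admissible direction does not flip with the phase condition: the weights must grow toward $Q_{-T}$. Your displayed condition, with $\beta/r$ and $1-rK_{\mathrm D}$, in fact corresponds to weights $r^{-t}$, so the write-up is inconsistent here.

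\emph{Double counting of $\beta$.} That condition also contains both a one-step term $\beta/r$ and a resummed factor $(1-\beta/r)^{-1}$. With the old-neighbour sweep, the $\beta$-coupling costs exactly $\beta/r$ per row. A resummed factor appears only if you back-substitute with the new $Q$'s, and then it replaces $\beta/r$ rather than adding to it. Keeping both is too lossy: in the example above, $\min_r\left(\frac{\beta}{r}+\frac{\bar LK_1}{\rho}\frac{r}{(1-\beta/r)(1-K_{\mathrm D}r)}\right)\approx1.23$.

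\emph{The repair.} It is short, and it gives a more elementary proof than the paper's. Write $X=\bar LK_1/\rho$ and put the weight $r^{-t}$ on $e_t$, with $0<r\le1$ and $K_{\mathrm D}r<1$.
\begin{itemize}
\item Rows $t\ge1$ of $\mathcal B_T$ contract by at most $F(r)=\beta/r+Xr/(1-K_{\mathrm D}r)$.
\item The terminal row contracts by at most $\beta+Xr/(1-K_{\mathrm D}r)\le F(r)$, and this inequality holds precisely because $r\le1$. That is where the phase condition enters, not in the direction.
\end{itemize}
Your $r=\sqrt{\beta/(K_{\mathrm D}+X)}$ works.
\begin{itemize}
\item Multiplying the first hypothesis by $\sqrt{K_{\mathrm D}+X}-\sqrt X$ gives $K_{\mathrm D}\sqrt\beta\le\sqrt{K_{\mathrm D}+X}-\sqrt X$. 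Hence $K_{\mathrm D}r<1$, and $F(r)=\sqrt{\beta(K_{\mathrm D}+X)}+X\sqrt\beta/(\sqrt{K_{\mathrm D}+X}-K_{\mathrm D}\sqrt\beta)\le\sqrt\beta(\sqrt{K_{\mathrm D}+X}+\sqrt X)<1$.
\item The requirement $r\le1$, i.e.\ $\beta\le K_{\mathrm D}+X$, is trivial if $K_{\mathrm D}\ge1$.
\item If instead $X>\beta(1-K_{\mathrm D})^2$, the first hypothesis is increasing in $X$, so it also holds at $X=\beta(1-K_{\mathrm D})^2$. Squaring there gives $\beta(2-K_{\mathrm D})<1$, whence $K_{\mathrm D}+X-\beta>K_{\mathrm D}\bigl(1-\beta(2-K_{\mathrm D})\bigr)\ge0$.
\end{itemize}
Banach's theorem in the norm $\max_t r^{-t}\|\cdot\|_\infty$ then finishes.

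\emph{Comparison with the paper.} The paper instead weights by the Perron vector of $\mathcal B_T$ (Lemma~\ref{lem:aa}). It computes $\lim_T\rho(\mathcal B_T)$ exactly via the continuant recurrence for $\det(\lambda I-\mathcal B_T)$ (Lemma~\ref{lem:BT_recurrence}, Theorem~\ref{thm:BT_asymptotics}), and then compares $\beta K_{\mathrm D}+2\sqrt{\beta X}$ with the hypothesis. That route is heavier but yields the sharp asymptotics of $\rho(\mathcal B_T)$ in both phases. The explicit geometric test vector is the Collatz--Wielandt device the paper already uses for $\widetilde{\mathcal B}_T$. It gives only an upper bound, but with a $T$-independent explicit norm, which is all the theorem needs.
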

\begin{proof}
    We defer the proof to Section \ref{sect:one-shot}. Iterations that result in finding the invariant \(Q\)-function is described in Algorithm \ref{alg:0} below.
\end{proof}

If there exists a unique TRMFE, then every accumulation point of the tails of the finite-horizon MFEs obtained from Theorem \ref{thrm:3} as $T \to \infty$ induces a stationary MFE:

\begin{theorem}\label{thrm:suyo}
    Suppose that Assumption \ref{ass:1} holds. For all $T\in \mathbb N$, let $(\pi^T_{-t},\mu^T_{-t}) \in \mathrm{MFE}_{T,\mu_0,Q^T_0}$ such that $Q^T_0 = H_1(Q^T_0,\mu^T_0)$. Let $Q_*$ be an accumulation point of $(Q^T_0)_T$ in $(\mathcal C_{\beta},\|\cdot\|_{\infty})$. If for any terminal $Q$-function $Q_{\mathrm{terminal}}$ there exists a unique $\mathrm{TRMFE}_{Q_{\mathrm{terminal}}}$, then any accumulation point of $(\pi^T_0\otimes \mu^T_0)_T \subset \mathcal P(X \times \mathcal P(A))$ obtained under a sequence $(T_n)_{n \in \mathbb N}\subset \mathbb N$ such that $\lim_{n \to \infty} Q^{T_n}_0 = Q_*$ in $(\mathcal C_{\beta}, \| \cdot \|_\infty)$ induces a stationary MFE after disintegration.
\end{theorem}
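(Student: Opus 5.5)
The argument rests on compactness, the stability part of Theorem~\ref{thrm:a}, and the assumed uniqueness of TRMFEs. I will use the uniqueness hypothesis to compare two TRMFEs with the same terminal $Q$-function, one of which is a shift of the other.

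First, fix a subsequence $(T_n)$ with $Q^{T_n}_0\to Q_*$ in $(\mathcal C_{\beta},\|\cdot\|_\infty)$ and $\pi^{T_n}_0\otimes\mu^{T_n}_0\to\gamma$. Because $X$ is finite and $\mathcal P(A)$ is compact, each section $\mathcal P(X\times\mathcal P(A))$ is weakly compact. By a diagonal argument I pass to a further subsequence along which the whole tail $(\pi^{T_n}_{-t}\otimes\mu^{T_n}_{-t})_{t\ge0}$ converges coordinatewise. Part 2 of Theorem~\ref{thrm:a}, applied with $Q^{T_n}_{\mathrm{terminal}}=Q^{T_n}_0\to Q_*$, says the limit induces a $\mathrm{TRMFE}_{Q_*}$. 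Call it $(\pi_{-t},\mu_{-t})_{t\ge0}$, with $Q$-functions $(Q_{-t})$, $Q_0=Q_*$, and $\mu_0$ the $X$-marginal of $\gamma$. A small point needs care here: the finite-horizon objects are indexed so that $Q^T_{-1}=H_1(Q^T_0,\mu^T_{-1})$. I will check, using the continuous convergence argument quoted after Theorem~\ref{thrm:a}, that the limiting $Q_{-t}$ are the limits of $Q^{T_n}_{-t}$. This follows because $H_1$ is jointly continuous on $\mathcal C_\beta\times\mathcal P(X)$ and the $\mu^{T_n}_{-t}$ converge.

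Second, I pass the invariance to the limit. Since $Q^{T_n}_0=H_1(Q^{T_n}_0,\mu^{T_n}_0)$, $\mu^{T_n}_0\to\mu_0$, and $H_1$ is continuous, I get $Q_*=H_1(Q_*,\mu_0)$. Now define a shifted family: $\tilde\mu_{-t}:=\mu_{-t+1}$ and $\tilde Q_{-t}:=Q_{-t+1}$ for $t\ge1$, and $\tilde\mu_0:=H_2(Q_*,\mu_0)$, $\tilde Q_0:=Q_*$. It is cleaner to phrase it as follows. The family $\hat Q_{-t}:=Q_{-t-1}$, $\hat\mu_{-t}:=\mu_{-t-1}$ for $t\ge0$ starts from $\hat Q_0=Q_{-1}=H_1(Q_*,\mu_{-1})$, which is not directly $Q_*$. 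So I instead build the family that prepends one step: set $\check\mu_{-t}:=\mu_{-t+1}$ for $t\ge1$, $\check Q_{-t}:=Q_{-t+1}$ for $t\ge1$, and $\check Q_0:=Q_*$, $\check\mu_0:=H_2(Q_*,\mu_0)$. Then $\check Q_{-1}=Q_0=Q_*=H_1(Q_*,\mu_0)=H_1(\check Q_0,\check\mu_{-1})$, the recursions for $t\ge1$ are inherited, and the policies are the argmins of the regularized $Q$-functions. Hence $(\check\pi,\check\mu)$ is also a $\mathrm{TRMFE}_{Q_*}$.

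Third, by the uniqueness hypothesis, $\check\mu_{-t}=\mu_{-t}$ and $\check Q_{-t}=Q_{-t}$ for all $t$. So $\mu_{-t+1}=\mu_{-t}$ and $Q_{-t+1}=Q_{-t}$ for all $t\ge1$, and in particular $\mu_0=\mu_{-1}=H_2(Q_{-1},\mu_{-1})=H_2(Q_*,\mu_0)$. The pair $(\pi_{Q_*},\mu_0)$ therefore satisfies the stationary consistency condition, and $Q_*=H_1(Q_*,\mu_0)$ is the discounted Bellman fixed point for the frozen measure $\mu_0$. Thus $\pi_{Q_*}$ is optimal, and the pair is a stationary MFE. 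Disintegrating $\gamma$ gives $\mu_0$ and $\pi_0$, which is concentrated on $\mathrm{Opt}(Q_*)$ because the regularized minimizer is unique; this closes the loop.

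The main obstacle is the identification in the first step. Theorem~\ref{thrm:a} gives some TRMFE induced by the limit occupancy measures, but I need its terminal $Q$ and its $\mu_0$ to be the actual limits $Q_*$ and the marginal of $\gamma$, so that the invariance transfers. I would handle this by rerunning the continuity argument of Theorem~\ref{thrm:a} directly on the pair $(Q^{T_n}_{-t},\mu^{T_n}_{-t})$ rather than citing the theorem as a black box.
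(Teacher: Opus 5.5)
Your proof is correct. It follows the paper's overall plan: extract a limiting $\mathrm{TRMFE}_{Q_*}$ via Theorem~\ref{thrm:a}, compare it with a time-shifted copy of itself, and invoke uniqueness. The difference is that your shift goes in the opposite direction.

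\textbf{The paper's route.} The paper drops the terminal step. It notes that $(Q_{-t},\mu_{-t})_{t\ge 1}$ is a TRMFE with terminal $Q_{-1}$ and asserts that the invariance $Q^T_0=H_1(Q^T_0,\mu^T_0)$ gives $Q_0=Q_{-1}$. Uniqueness plus an induction then yields $\pi_{-t}\otimes\mu_{-t}=\pi_0\otimes\mu_0$.

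\textbf{Your route.} You prepend a step instead: new terminal $Q_*$, new $\mu_{-1}$ equal to $\mu_0$, and new $\mu_0$ equal to $H_2(Q_*,\mu_0)$. The recursion at the added step is exactly the limiting invariance $Q_*=H_1(Q_*,\mu_0)$. A single application of uniqueness then gives both $\mu_{-t+1}=\mu_{-t}$ for all $t\ge 1$ and the fixed-point relation $\mu_0=H_2(Q_*,\mu_0)$, with no induction needed.

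\textbf{What your route buys.} It uses the invariance in precisely the form in which it holds. In the paper's indexing, $Q_{-1}=H_1(Q_*,\mu_{-1})$ involves $\mu_{-1}$, not $\mu_0$. So the identity $Q_0=Q_{-1}$ does not follow immediately from $Q_0=H_1(Q_0,\mu_0)$; it would essentially require $\mu_{-1}=\mu_0$, which is part of what is being proved. Your prepended family never needs that identity.

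\textbf{The identification step.} You correctly flag that the terminal $Q$ and the $\mu_0$ of the limiting TRMFE must be identified with $Q_*$ and with the $X$-marginal of the limit occupancy measure. Rerunning the argument of Theorem~\ref{thrm:1} handles this, because that proof builds the limit coordinatewise from $Q^{T_n}_{-t}\to Q_{-t}$ and $\mu^{T_n}_{-t}\to\mu_{-t}$. In particular $Q_0=\lim_n Q^{T_n}_0=Q_*$, and the invariance passes to the limit by continuity of $H_1$.
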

\begin{proof}
By Theorem \ref{thrm:a}, every accumulation point of the family of TRMFEs
\(
\{(\pi^T_{-t} \otimes \mu^T_{-t})_{t=0}^T:T \in \mathbb N\}
\)
(after extending them to the infinite-horizon setting via concatenation), under the product topology on \(\prod_{T=0}^{\infty} \mathcal P(X\times A)\), where each factor is endowed with the topology of weak convergence, induces an infinite-horizon TRMFE under the terminal \(Q\)-function \(Q_*\). Note that if \((Q_{-t},\mu_{-t})_{t=0}^{\infty}\) induces a TRMFE, then \((Q_{-t},\mu_{-t})_{t=1}^{\infty}\) also induces a TRMFE due to the uniqueness of the minimizers of the \(Q\)-functions. However, {$Q^T_0=H_1(Q^T_0,\mu^T_0)$ for all $T$ implies \(Q_0=Q_{-1}\); thus,} the uniqueness of the TRMFE together with a straightforward induction argument implies that
\(
\pi_{-t}\otimes \mu_{-t} = \pi_{0} \otimes \mu_0
\)
for all \(t \in \mathbb N\).
\end{proof}

\begin{remark}
The condition
\(
\sqrt{\beta}\left( \sqrt{K_{\mathrm D}+ \frac{\bar LK_1}{\rho}}+\sqrt{\frac{\bar LK_1}{\rho}} \right)<1
\)
is the horizon-length-independent contraction condition established for regularized finite-horizon MFGs in \cite[Theorem~2]{ayd}. The contraction condition for stationary MFGs established in \cite{anahtarci2023q} is
\(
K_{\mathrm D} + \frac{K_1}{\rho}\frac{\bar L}{1-\beta}<1,
\)
which is often more restrictive than the finite-horizon MFG contraction condition. It is easy to verify that the Lipschitz parameter restrictions in Theorem~\ref{thrm:suyo} can hold even when
\(
K_{\mathrm D} + \frac{K_1}{\rho}\frac{\bar L}{1-\beta}>1.
\)
\end{remark}

Under the conditions of Theorem \ref{thrm:3}, the finite-horizon TRMFG is contractive; thus, we can always compute a finite-horizon MFE exponentially fast via iterations. Consequently, approximating a stationary MFE in the setting of Theorem \ref{thrm:3} amounts to computing a finite-horizon MFE and then identifying the asymptotic behavior of the terms that are sufficiently far from the terminal stage. However, to pass from a finite-horizon MFE to a stationary MFE, we require the uniqueness of the TRMFE under a given terminal $Q$-function. We emphasize that these conditions do not necessarily imply the existence of a stationary MFE.

\begin{algorithm}[h]\label{alg:0}
\caption{Finite-Horizon Fixed-Point with One-Shot Terminal Invariance}
    Fix initial state-measure $\mu_0 \in \mathcal P(X)$\;
    Initialize with $\pmb Q^T_0 = (Q_{0,t})_{t=0}^T \in \prod_{t=0}^T \mathcal C_{\beta}$\;
    \While{$\pmb Q^{T}_{n+1} \not = \pmb Q^{T}_n$}{
        \For{$t=0,\cdots,T,T+1$}{
            $\mu_{t+1}= H_2(Q_{n,t},\mu_{t})$
        }
         \For{$t=T-1,T-2,\cdots,0$}{
            $Q_{n+1,t} = H_1(Q_{n,t+1},\mu_{t})$\;
        }
        $Q_{n+1,T} = H_1(Q_{n,T},\mu_{T+1})$\;
        $\pmb Q^T_{n+1} = (Q_{n+1,t})_{t=0}^T$\;
    }
    Find an accumulation point $(\pmb \pi^*,(\mu^{*,0},\pmb \mu^*))$ obtained from the disintegration of weak-limit accumulation point of $\pmb \pi^{*,T}\otimes \pmb \mu^{*,T}$ as $T \to \infty$\;
\Return{stationary mean-field equilibrium $(\pmb \pi^*,\pmb \mu^*)$}
\end{algorithm}

However, under additional restrictions on the dynamics of the MFG, these results remain applicable even for large discount factors. In particular, a minorization condition together with a geometric drift condition having a sufficiently small drift factor $\alpha$ replaces the term $\beta$ in the relevant expressions with $\alpha\beta$. Consequently, even when the discount factor is sufficiently large, the results above remain applicable provided that the drift factor $\alpha$ is sufficiently small.

{For the remainder of this section, we work under the average-cost criterion and extend the preceding results to this setting.}
Let $W=\max_{x,a} \omega(x,a)$. Incorporating the weight function into the contraction argument multiplies the term $\frac{\bar LK_1}{\rho}$ by the factor $W$ (see, for instance, \cite[Theorem 3]{anahtarci2020value}). Let $\tilde P$ be the lifted transition sub-probability kernel obtained under Assumption \ref{ass:acmfg} (see Definition \ref{lem:-1}). Then the following result holds.
\begin{theorem}
Suppose that Assumptions \ref{ass:acmfg} and \ref{ass:1} hold. Then the following results hold.
\begin{enumerate}
    \item For any terminal \(Q\)-function \(Q_{\mathrm{terminal}}\in \mathcal C_\alpha\),
there exists a TRMFE for the average-cost MFTVP
with terminal condition \(Q_{\mathrm{terminal}}\).
    \item Suppose that for any given $Q_{\mathrm{terminal}}$ there exists a unique average-cost MFTVP. If \(Q^T_{\mathrm{terminal}}\to Q_{\mathrm{terminal}}\) in \(\mathcal C_\alpha\),
and \((\pi^T_{-t},\mu^T_{-t})_{t=0}^T\) is a sequence of finite-horizon MFEs obtained under
terminal \(Q\)-functions \(Q^T_{\mathrm{terminal}}\in \tilde{\mathcal C}_{\alpha}\) in the sub-MFTVP \((X,\mathcal P(A),C+\rho\Omega,\tilde P,Q^{T}_{\mathrm{terminal}})\), then every accumulation point of
\((\pi^T_{-t}\otimes \mu^T_{-t})_{t=0}^T\) in \(\prod_{T=0}^{\infty} \mathcal P(X\times A)\) (where each section is endowed with the weak convergence) induces an average-cost TRMFE equilibrium under \(Q_{\mathrm{terminal}}\).
\end{enumerate}
\end{theorem}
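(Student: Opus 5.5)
The proof follows the scheme of Theorem~\ref{thrm:a}, with the sub-probability kernel $\tilde P$ replacing $\beta P$ in the $Q$-recursion, the weighted norm $\|\cdot\|_{\omega}$ replacing $\|\cdot\|_\infty$, and $\mathcal C_\alpha$ replacing $\mathcal C_\beta$.

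\textbf{Step 1: $\mathcal C_\alpha$ is invariant under $H^{\mathrm{avg}}_1(\cdot,\mu)$ for every $\mu$.} For the weighted bound, write $\min_v Q(y,v)\le \omega_{\max}(y)\|Q\|_{\widetilde\omega}$. Then
\[
\sum_y \omega_{\max}(y)\tilde P(y\mid x,u,\mu)\le \alpha\,\omega(x,u)+b-\sum_y\omega_{\max}(y)\lambda(y)=\alpha\,\omega(x,u),
\]
where the last equality is Convention~\ref{conv}. This gives $\|H^{\mathrm{avg}}_1(Q,\mu)\|_{\widetilde\omega}\le M+\alpha M/(1-\alpha)=M/(1-\alpha)$, using $\omega\ge1$. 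For the Lipschitz bound in $(x,a)$, note that $\tilde p(\cdot\mid x,a,\mu)-\tilde p(\cdot\mid \hat x,\hat a,\mu)=p(\cdot\mid x,a,\mu)-p(\cdot\mid \hat x,\hat a,\mu)$, since $\lambda$ cancels. The usual argument with total variation of signed zero-mass measures then gives the constant $L_1/(1-K_1\alpha/2)$. The weighted oscillation of $\min_v Q$ is controlled through the drift condition, which is where $\alpha$ enters.

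\textbf{Step 2: compactness and continuity.} Since $X,A$ are finite, $\omega$ is bounded by $W$, so $\|\cdot\|_\omega$ and $\|\cdot\|_\infty$ are equivalent. The compactness of $(\mathcal C_\alpha,\|\cdot\|_\infty)$ therefore follows exactly as for $\mathcal C_\beta$: the set is closed, bounded and equi-Lipschitz in finite dimensions, and the lift $u\mapsto\sum_a\mathfrak q(x,a)u(a)+\rho\Omega(u)$ is continuous in $\mathfrak q$. Joint continuity of $(Q,\mu)\mapsto H_1^{\mathrm{avg}}(Q,\mu)$ follows from Assumption~\ref{ass:1} and continuity of $\tilde P$ in $\mu$. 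Continuity of $(Q,\mu)\mapsto H_2(Q,\mu)$ follows from uniqueness and continuity of the strongly convex minimizer $\pi_Q$ \cite[Lemma 11]{ayd}. Here $H_2$ still uses $P$, not $\tilde P$.

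\textbf{Step 3: existence (part 1).} For each $T$, existence of a finite-horizon MFE of the sub-MFTVP $(X,\mathcal P(A),C+\rho\Omega,\tilde P,Q_{\mathrm{terminal}})$ with a fixed initial measure follows from Kakutani/Brouwer. Concretely, $\mu$-flows in $\mathcal P(X)^{T+1}$ form a compact convex set. The map sending a flow to the backward $Q$'s and then to the forward $\mu$'s is continuous by Step 2, and it stays in $\mathcal C_\alpha$ by Step 1. Extend these finite-horizon solutions by $\nu$ as in Theorem~\ref{thrm:a}. By Tychonoff, some subsequence converges componentwise, both in $Q_{-t}\in\mathcal C_\alpha$ and in $\pi_{-t}\otimes\mu_{-t}$. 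A diagonal argument over $t$ gives a single subsequence that works for all $t$. Passing to the limit in $Q^T_{-t-1}=H^{\mathrm{avg}}_1(Q^T_{-t},\mu^T_{-t-1})$ uses continuous convergence \cite[Theorem 3.5]{serfozo1982convergence}. Passing to the limit in $\mu^T_{-t}=H_2(Q^T_{-t-1},\mu^T_{-t-1})$ uses the policy continuity from Step 2. The limit occupancy measures disintegrate to $\pi_{Q_{-t}}\otimes\mu_{-t}$, so the limit is a TRMFE.

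\textbf{Step 4: approximation (part 2).} The same argument applies with $Q^T_{\mathrm{terminal}}\to Q_{\mathrm{terminal}}$, since at $t=0$ the limit relation uses $Q^T_0\to Q_{\mathrm{terminal}}$. Every accumulation point therefore induces a TRMFE under $Q_{\mathrm{terminal}}$. Uniqueness is not actually needed for this conclusion; it only upgrades subsequential convergence to full convergence.

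\textbf{Main obstacle.} The delicate step is Step 1, specifically the Lipschitz estimate in the weighted setting. The $\tilde P$-term must yield exactly the factor $K_1\alpha/2$ rather than $K_1/2$. I would first try to bound $|\sum_y \min_v Q(y,v)(\tilde P(y\mid x,u)-\tilde P(y\mid\hat x,\hat u))|$ by splitting the signed measure into positive and negative parts. Each part is then paired with the oscillation of $\min_v Q$, using the weighted bound and the drift inequality. If this yields only $K_1/2$, I would redefine $\mathcal C_\alpha$ with that constant, which does not affect the existence argument.
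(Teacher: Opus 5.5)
Your proposal is correct once you use the fallback you already anticipate, and it follows the paper's route. The paper simply declares the proof identical to that of Theorem~\ref{thrm:a}: a compact invariant domain, diagonal extraction, continuous convergence to pass to the limit in $H_1^{\mathrm{avg}}$, and continuity of the regularized minimizer to pass to the limit in $H_2$, with the weight function handled as in \cite{anahtarci2020value}. Your Brouwer step for finite-horizon existence and your remark that uniqueness is not needed in part~2 make explicit what the paper leaves implicit.

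One correction to Step~1: $\lambda$ cancels in $\tilde p(\cdot\mid x,a,\mu)-\tilde p(\cdot\mid \hat x,\hat a,\mu)$, so the oscillation argument gives the factor $K_1/2$, not $K_1\alpha/2$, and this cannot be improved in general (two states, one action, $\omega\equiv 1$ already attains it). Your fallback is therefore genuinely required, not optional. The cleanest version drops the Lipschitz constraint altogether, since $X\times A$ is finite. A $\|\cdot\|_\omega$-ball of radius $(M+\rho\sup|\Omega|)/(1-\alpha)$, enlarged to absorb the $\rho\Omega$ term inside $\min_v Q$, is then a compact invariant domain containing $\mathcal C_\alpha$.
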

\begin{proof}
The proofs are identical to those of Theorem \ref{thrm:a}. The differences due to the weight function can be handled as in \cite[Theorem 3]{anahtarci2020value} by switching to the asymptotic optimality equation \eqref{eq:const8} instead of using the Bellman operator since \((\pi^T_{-t},\mu^T_{-t})_{t=0}^T\) are obtained under the sub-MFTVP.
\end{proof}

In general, it is often desirable to have a larger discount factor in infinite-horizon problems. Our final result provides an analogue of the contraction condition in Theorem \ref{thrm:3} for the finite-horizon problem under Assumption \ref{ass:acmfg}, which mitigates the effect of the discount factor.

\begin{theorem}\label{thrm:avg}
    Suppose that Assumptions \ref{ass:acmfg} and \ref{ass:1} hold. Let \(W:=\sup_{(x,a) \in X\times A}\omega(x,a)\) and $\beta\in(0,1)$.
If it holds that 
\[
\sqrt{\alpha\beta}\left( \sqrt{K_{\mathrm D}+ \frac{\bar LK_1}{\rho}W}+\sqrt{\frac{\bar LK_1}{\rho}W}\right)<1, \text{ and } \frac{\bar L K_1}{\rho}W>\alpha\beta(1-K_{\mathrm D})^2 \text{ or } K_D>1.
\]
then for any $T \in \mathbb N$ and $\mu_0 \in \mathcal P(X)$, there exists a unique $(\pi^T_{-t},\mu^T_{-t})_{t=0}^{T}\in \mathrm{MFE}_{T,\mu^T_0,Q^T_0}$ such that $Q^T_0=H_1^{\mathrm{avg}}(Q^T_0,\mu^T_0)$. Furthermore, this \(Q^T_0 \in \mathcal C_{\alpha\beta}\) can be computed through a contractive mapping iteratively.
\end{theorem}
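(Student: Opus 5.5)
The plan is to mirror the proof of Theorem~\ref{thrm:3}, working throughout with the weighted norm $\|\cdot\|_{\omega}$ and the sub-probability kernel $\tilde P$ in place of $\|\cdot\|_\infty$ and $P$. We consider the map behind Algorithm~\ref{alg:0}. It acts on the product space $\prod_{t=0}^{T}\mathcal C_{\alpha\beta}$ of $Q$-flows $\pmb Q=(Q_{-t})_{t=0}^{T}$, and we call it $\Phi(\pmb Q)$. For a given $\pmb Q$ we do three things.
\begin{enumerate}
\item Run the state measures forward from $\mu_{-T}=\mu_0$ with $\mu_{-t+1}=H_2(Q_{-t},\mu_{-t})$.
\item Update the non-terminal $Q$-functions backward by $H_1^{\mathrm{avg}}(Q_{-t+1},\mu_{-t})$, using $\tilde P$ and the factor $\beta$.
\item Replace the terminal component by $H_1^{\mathrm{avg}}(Q_0,\mu_0)$, which is the one-shot terminal invariance step.
\end{enumerate}
A fixed point of $\Phi$ is exactly a finite-horizon MFE with $Q^T_0=H_1^{\mathrm{avg}}(Q^T_0,\mu^T_0)$. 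The first step is to check that $\mathcal C_{\alpha\beta}$ is invariant under $H_1^{\mathrm{avg}}(\cdot,\mu)$. For the weighted bound we use Assumption~\ref{ass:acmfg}(2) together with Convention~\ref{conv}, which give $\sum_y\omega_{\max}(y)\tilde P(y\mid x,u,\mu)\le\alpha\,\omega(x,u)$. Iterating therefore contracts in $\|\cdot\|_\omega$ at rate $\alpha\beta$, and the sup bound $M/(1-\alpha\beta)$ is preserved. The Lipschitz bound $\bar L$ (with $\beta$ replaced by $\alpha\beta$) follows as in \cite[Proposition 1]{anahtarci2023q}.

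Next we derive one-step perturbation estimates, all in $\|\cdot\|_\omega$.
\begin{itemize}
\item (a) $\|H_1^{\mathrm{avg}}(Q,\mu)-H_1^{\mathrm{avg}}(Q',\mu)\|_\omega\le\alpha\beta\|Q-Q'\|_\omega$, by the drift inequality.
\item (b) $\|H_1^{\mathrm{avg}}(Q,\mu)-H_1^{\mathrm{avg}}(Q,\mu')\|\lesssim \bar L\|\mu-\mu'\|_1$. This uses Lipschitz continuity of $c$ and $p$ in $\mu$, with the $\beta$ part controlled by the Lipschitz constant of $\min_v Q(y,v)$.
\item (c) $\|H_2(Q,\mu)-H_2(Q',\mu')\|_1\le K_{\mathrm D}\|\mu-\mu'\|_1+\frac{K_1}{\rho}\|Q-Q'\|_\infty$. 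Here the $K_{\mathrm D}$ part comes from the Dobrushin argument, and the policy part from $1$-strong convexity of $\Omega$, which makes $Q\mapsto\pi_Q$ $\frac1\rho$-Lipschitz.
\end{itemize}
Converting $\|Q-Q'\|_\infty\le W\|Q-Q'\|_\omega$ is exactly where the factor $W$ multiplies $\bar LK_1/\rho$.

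Combining these estimates, the errors $e^Q_{-t}=\|Q_{-t}-Q'_{-t}\|_\omega$ and $e^\mu_{-t}=\|\mu_{-t}-\mu'_{-t}\|_1$ satisfy a linear system. The measure errors accumulate forward with coefficients $K_{\mathrm D}$ and $WK_1/\rho$. The $Q$-errors accumulate backward with coefficients $\alpha\beta$ and $\bar L$, and the terminal component feeds back into itself at rate $\alpha\beta$. Substituting the forward recursion into the backward one gives a Toeplitz-type matrix acting on $(e^Q_{-t})_t$. As in \cite[Theorem 2]{ayd}, its induced norm is bounded uniformly in $T$ by $\sqrt{\alpha\beta}\bigl(\sqrt{K_{\mathrm D}+\bar LK_1W/\rho}+\sqrt{\bar LK_1W/\rho}\bigr)$. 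We use a weighted $\ell^\infty$ norm on the flow with geometric weights $r^t$, where $r$ is chosen from the roots of the characteristic quadratic.

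The main obstacle is the terminal feedback row. Because of the extra loop $Q_0\mapsto H_1^{\mathrm{avg}}(Q_0,\mu_0)$, the chosen geometric weighting must also dominate that row. This is where the alternative condition enters: either $\bar LK_1W/\rho>\alpha\beta(1-K_{\mathrm D})^2$ or $K_{\mathrm D}>1$. Under it, the appropriate root makes the geometric weights decay toward the terminal end, so the terminal row's own contribution $\alpha\beta$ plus the inherited $\mu_0$-error stays below the same spectral bound. I would first verify this by explicitly solving the quadratic $r^2-\dots$ exactly as in the proof of Theorem~\ref{thrm:3}, replacing $\beta$ by $\alpha\beta$ and $\bar LK_1/\rho$ by $\bar LK_1W/\rho$. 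With $\Phi$ a contraction on the complete space $\prod_t\mathcal C_{\alpha\beta}$, Banach's theorem gives existence, uniqueness and iterative computability of $Q^T_0\in\mathcal C_{\alpha\beta}$.
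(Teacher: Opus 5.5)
Your proposal is correct. Everything except the spectral step coincides with the paper's proof: the same map $\Phi$, invariance of $\mathcal C_{\alpha\beta}$, the weighted estimates of Lemma~\ref{lem:9}, and the matrix $\mathcal B_T$ with $(\beta,\bar LK_1/\rho)$ replaced by $(\alpha\beta,\ell)$, where $\ell:=\bar LK_1W/\rho$. The two routes differ only in how they show this matrix is contractive. The paper invokes Theorem~\ref{thm:BT_asymptotics}: it computes $\lim_T\rho(\mathcal B_T)$ from the continuant recurrence for $\det(\lambda I-\mathcal B_T)$ and a sign analysis of $c_\pm(\lambda)$, then uses monotonicity to get $\rho(\mathcal B_T)<1$ for every $T$. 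It then contracts in the norm given by the $T$-dependent, non-explicit Perron vector of $\mathcal B_T^\top$. You instead use a fixed geometric test vector, i.e.\ a Collatz--Wielandt bound applied directly to $\mathcal B_T$; the paper does this only for $\widetilde{\mathcal B}_T$, in Lemmas~\ref{lem:BT_variational}--\ref{lem:BT_weighted_super}. Your choice of $r$ was left vague, so here it is pinned down. Take $t_*=\sqrt{\alpha\beta}/(\sqrt{\ell}+K_{\mathrm D}\sqrt{\alpha\beta})$, the minimizer of $F(r)=\alpha\beta/r+\ell r/(1-K_{\mathrm D}r)$ from Lemma~\ref{lem:tstar-threshold}. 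Equivalently, $\alpha\beta/t_*$ is the double root at $\lambda_{\mathrm{ac}}$ of the paper's quadratic \eqref{eq:BT_char_eq} after the substitution. Use the norm $\max_{0\le t\le T}t_*^{-t}\|Q_{-t}-\hat Q_{-t}\|_\omega$. With $w=(t_*^{t})_{t=0}^{T}$ and $K_{\mathrm D}t_*<1$, one has $(\mathcal B_Tw)_t\le F(t_*)\,w_t=(\alpha\beta K_{\mathrm D}+2\sqrt{\alpha\beta\ell})\,w_t$ for all $t\ge1$, uniformly in $T$. The terminal row gives $(\mathcal B_Tw)_0\le\alpha\beta+\ell t_*/(1-K_{\mathrm D}t_*)=\alpha\beta+\sqrt{\alpha\beta\ell}$. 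This is at most $F(t_*)$ if and only if $\alpha\beta(1-K_{\mathrm D})\le\sqrt{\alpha\beta\ell}$, i.e.\ if and only if $t_*\le1$. That is exactly the phase condition (automatic when $K_{\mathrm D}\ge1$), which confirms your diagnosis of where that hypothesis enters. Finally, $F(t_*)\le\sqrt{\alpha\beta}\bigl(\sqrt{K_{\mathrm D}+\ell}+\sqrt{\ell}\bigr)<1$ by the same algebra the paper uses. Your route gives an explicit, $T$-independent norm and contraction modulus and bypasses the determinant asymptotics entirely. Optimizing your weights over $r$ even reproduces the paper's upper bounds in both regimes: $F(t_*)$ when $t_*\le1$, and $\alpha\beta+\ell/(1-K_{\mathrm D})$ otherwise (attained at $r=1$). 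What the paper's heavier route adds is the matching lower bound: these values are the exact limits of $\rho(\mathcal B_T)$, so the modulus cannot be improved asymptotically. The theorem itself does not need this.
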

\begin{proof}
    See Section~\ref{sect:average-cost-contraction}.
\end{proof}

\section{A Numerical Example}\label{sec:numerical-example2}

{
We provide a two-state, two-action example in which the infinite-horizon TRMFE is
unique for every prescribed terminal $Q$-function and the terminal-value
invariance iteration converges, whereas stationary best-response and
ordinary value iterations do not converge to a stationary MFE. 

We specify our unregularized MFG $(X,A,c,p)$ as follows. The state and action spaces are $X=A=\{0,1\}$, and the cost function and transition probability kernel are defined as 
\begin{align}
p(y\mid x,a,\mu)&=
\begin{cases}
0.49+0.02a, \, y=1\\
1-(0.49+0.02\,a), \, y=0
\end{cases}
\\
c(x,a,\mu)&=0.32+\frac{2x-1}{160}z(\mu)-0.002a f(z(\mu)),
\label{eq:validated-model}
\end{align}
where 
\[
z(\mu)=100\left(\mu(1)-\frac12\right),\qquad
\chi(x)=\max\{-1,\min\{x,1\}\},\qquad
f(x)=-\frac{51}{50}\chi(x)+\frac15\chi(x)^3.
\]
We fix the discount factor as
$\beta=4/5$, the regularization parameter as $\rho=1/1000$, and set the regularizer as
\[
\Omega(u)=u^2+(1-u)^2-\frac12.
\]

\begin{proposition}
    Under the configuration above, the following properties hold.
    \begin{enumerate}
        \item The regularizer $\Omega:\mathcal P(A) \to \mathbb R$ is nonnegative and $1$-strongly convex with respect to the $1$-norm.
        \item The cost function $c:X\times A \times \mathcal P(X) \to \mathbb R$ satisfies $0.00586\leq c\leq 0.63414$.
        \item Assumption~\ref{ass:acmfg} holds with $\lambda =0.98$, $\omega \equiv 1$, and $\alpha =0.02.$
        \item Assumption~\ref{ass:1} holds with $L_1 = 0.625$, and $K_1 = 0.02$.
    \end{enumerate}
\end{proposition}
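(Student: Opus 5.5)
The plan is to verify the four claims one at a time by direct computation. Everything is finite: $X=A=\{0,1\}$, and $\mathcal P(A)$ is identified with $u\in[0,1]$, where $u$ is the mass on action $1$.

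\emph{Regularizer.} Write $\Omega(u)=2u^2-2u+\tfrac12=2(u-\tfrac12)^2\ge 0$, which gives nonnegativity. For strong convexity, take $u,v\in\mathcal P(A)$. Their $1$-norm distance is $\|u-v\|_1=2|u-v|$. The quadratic identity
\[
\theta\Omega(u)+(1-\theta)\Omega(v)-\Omega(\theta u+(1-\theta)v)=2\theta(1-\theta)(u-v)^2
\]
then equals $\tfrac12\theta(1-\theta)\|u-v\|_1^2$. This is exactly the required inequality, holding with equality, so $\Omega$ is $1$-strongly convex.

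\emph{Cost bounds.} Note that $z(\mu)\in[-50,50]$, $\chi\in[-1,1]$, and $f(s)=-\tfrac{51}{50}\chi+\tfrac15\chi^3$.
\begin{itemize}
\item The term $\frac{2x-1}{160}z$ lies in $[-\tfrac{50}{160},\tfrac{50}{160}]=[-0.3125,0.3125]$.
\item The map $s\mapsto -\tfrac{51}{50}s+\tfrac15 s^3$ is monotone decreasing on $[-1,1]$, since its derivative is $-1.02+0.6s^2<0$. Hence $|f|\le 1.02-0.2=0.82$, and $|0.002af|\le 0.00164$.
\end{itemize}
Adding these to $0.32$ gives bounds of $0.32\pm(0.3125+0.00164)$, i.e.\ $[0.00586,0.63414]$. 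These bounds are attained at $|z|\ge 1$ with the appropriate signs, so they are sharp.

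\emph{Assumption~\ref{ass:acmfg}.} The kernel does not depend on $x$ or $\mu$. Its values satisfy $p(1\mid\cdot)\in\{0.49,0.51\}$ and $p(0\mid\cdot)\in\{0.51,0.49\}$. The componentwise infimum is therefore $\lambda=(0.49,0.49)$, a sub-probability measure of total mass $0.98$. (I read ``$\lambda=0.98$'' as its total mass.)

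With $\omega\equiv1$ the drift inequality reads $1\le\alpha+b$. Convention~\ref{conv} forces $b=\sum_x\lambda(x)=0.98$, and then $\alpha=0.02$ works. Equivalently, the residual kernel $\tilde p$ has mass $0.02$, which is the quantity that actually enters the sub-probability recursion.

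\emph{Assumption~\ref{ass:1}.} For $p$, changing $a$ gives $\|p(\cdot\mid x,0)-p(\cdot\mid x,1)\|_1=0.04=K_1\cdot 2$ with $K_1=0.02$. Changes in $x$ or $\mu$ leave $p$ unchanged, so $K_1=0.02$ suffices.

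For $c$ there are three cases.
\begin{itemize}
\item \emph{Change in $x$.} The difference is $\frac{2}{160}|z|\le\frac{100}{160}=0.625$, so the coefficient is $L_1=0.625$.
\item \emph{Change in $\mu$.} Here $|z(\mu)-z(\hat\mu)|=100|\mu(1)-\hat\mu(1)|=50\|\mu-\hat\mu\|_1$. The first term therefore changes by at most $\frac{50}{160}\|\mu-\hat\mu\|_1=0.3125\|\mu-\hat\mu\|_1$. The $f$-term changes by at most $0.002\cdot 1.02\cdot 50\|\mu-\hat\mu\|_1=0.102\|\mu-\hat\mu\|_1$, because $\chi$ is $1$-Lipschitz and $|f'|\le1.02$. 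The total is about $0.4145\le 0.625$.
\item \emph{Change in $a$.} The difference is $0.002|f|\le 0.00164\le 2L_1$.
\end{itemize}
Combining the cases by the triangle inequality gives $L_1=0.625$.

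\emph{Main obstacle.} The only delicate point is the $\mu$-Lipschitz bound for the composite $f\circ z$. Its steep slope of $100$ is compensated by the small factor $0.002$, and the case analysis above shows this. The rest is bookkeeping: interpreting $\lambda$ and $\alpha$ consistently with Convention~\ref{conv}.
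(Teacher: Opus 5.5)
Your proposal is correct. The paper omits this proof as straightforward, and your direct verification is exactly what it leaves to the reader: the identity $\theta\Omega(u)+(1-\theta)\Omega(v)-\Omega(\theta u+(1-\theta)v)=\tfrac12\theta(1-\theta)\|u-v\|_1^2$, the bounds $|z|\le 50$ and $|f|\le 0.82$, the minorizing measure $(0.49,0.49)$ with $b=0.98$ and $\alpha=0.02$ under Convention~\ref{conv}, and the coordinatewise Lipschitz estimates (the $x$-direction gives exactly $L_1=0.625$) all check out. One cosmetic point: $f$ is the cubic $s\mapsto -\tfrac{51}{50}s+\tfrac15 s^3$ composed with $\chi$, so it is not differentiable at $|z|=1$, and ``$|f'|\le 1.02$'' should be read as ``$f$ is $1.02$-Lipschitz,'' which is all your argument uses.
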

\begin{proof}
    The proofs are straightforward, so we omit the details.
\end{proof}

In particular, the constants in
Convention~\ref{conv:5} are
\[
\bar L=\frac{0.625}{0.99984},
\qquad
K_{\mathrm D}=0.05,
\qquad
\frac{\bar L K_1}{\rho}W\simeq12.502.
\]
Since $\omega \equiv 1$, $W=1$, substitution of these constants into Theorem~\ref{thrm:avg} gives
\begin{align*}
&\sqrt{\alpha\beta}\left(
\sqrt{K_{\mathrm D}+\frac{\bar L K_1}{\rho}W}
+\sqrt{\frac{\bar L K_1}{\rho}W}
\right)
\simeq 0.895392<1,
\\
&\frac{\bar L K_1}{\rho}W
\simeq12.502
>\alpha\beta(1-K_{\mathrm D})^2=0.01444.
\end{align*}
Thus, the inner-loop present in Algorithm~\ref{alg:0} converges.

The infinite-horizon TRMFE is also unique for every terminal $Q$-function.
Indeed, the $Q$-function $(Q_t)_t$ obtained, $Q_t=H^{\mathrm{avg}}_1(Q_{t+1},\mu_t),$ during the update satisfies
\[
\min_u Q_{t}(1,u)-\min_uQ_{t}(0,u)=\frac{z(\mu_t)}{80}.
\]
Consequently, for an infinite-horizon TRMFE and $t\leq-2$,
\[
z(\mu_{t+1})=\chi\bigl(f(z(\mu_t))-0.1\,z(\mu_{t+1})\bigr)
=\frac{f(z(\mu_t))}{1.1}.
\]
Since $f(0)=0$ and $\operatorname{Lip}(f/1.1)=51/55<1$, it follows that
$z_t=0$ for all $t\leq-1$. For any prescribed terminal $Q$-function,
the remaining population is uniquely determined by
\[
z_0=\chi\!\left(-8\bigl[\min_u Q_{0}(1,u)-\min_u Q_{0}(0,u)\bigr]\right).
\]
Backward recursion determines the entire $Q$-function flow, and the
regularizer determines its policies uniquely. The stationary MFE is also
unique, with $\mu_*=(1/2,1/2)$ and minimizing action $(u(0),u(1))=(1/2,1/2)$ at both
states.

For the numerical experiment, we set $T=20$ and
$\mu_{\mathrm{initial}}(1)=.508$. We generated $2000$ state-symmetric
initial $Q$-function flows whose two affine action coefficients were
independently and uniformly distributed on $[0,.20]$. Every initialization
converged to the same fixed point; after $65$ updates, the largest uniform
error over the $2000$ runs was less than $1.6\times10^{-14}$.
Figure~\ref{fig:numerical-example2-terminal-convergence} reports the sample
mean together with one standard deviation.

\begin{figure}[H]
\centering
\includegraphics[width=.6\textwidth]{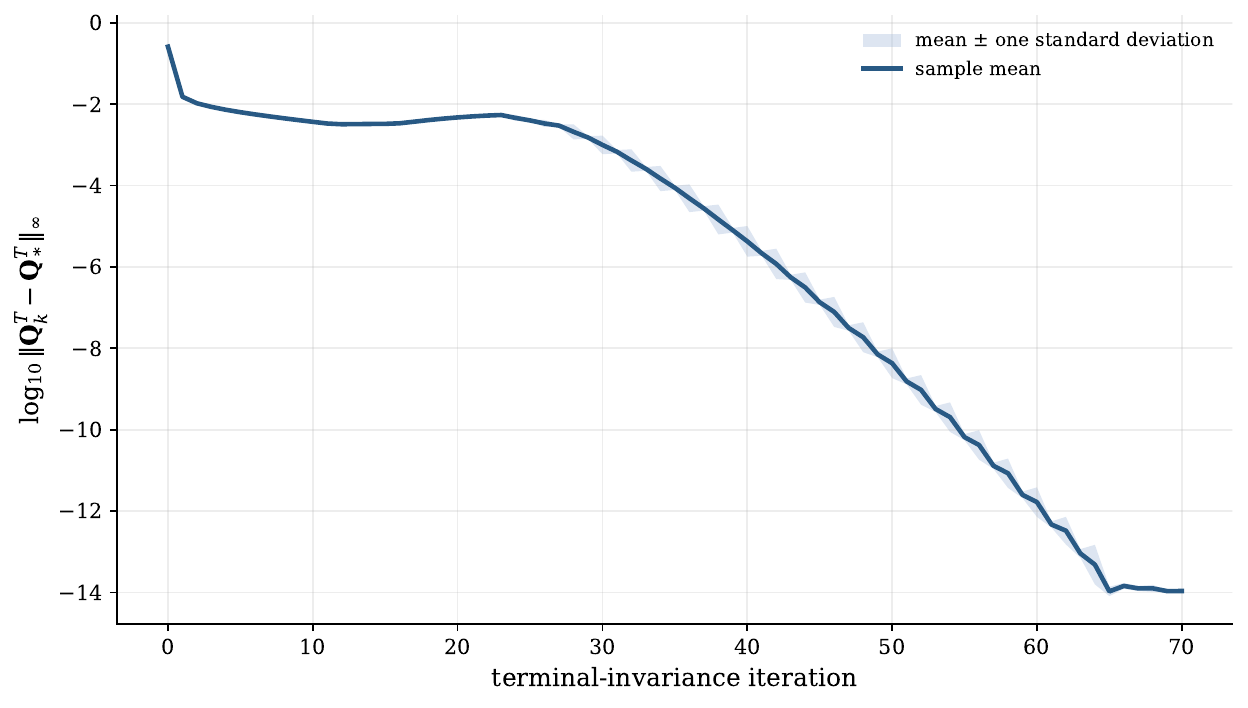}
\caption{Convergence of the discounted terminal-value invariance iteration
for $T=20$ under $2000$ independent random initializations.}
\label{fig:numerical-example2-terminal-convergence}
\end{figure}

We compare in Figure \ref{fig:numerical-example2-algorithm-comparison} our iteration with stationary best response, which solves
$Q_n=H_1^{\mathrm{avg}}(Q_n,\mu_n)$ and then updates $\mu_n$ using $H_2$,
and with ordinary value iteration, which updates $Q_n$ and $\mu_n$
simultaneously.
\begin{figure}[H]
\centering
\includegraphics[width=.8\textwidth]{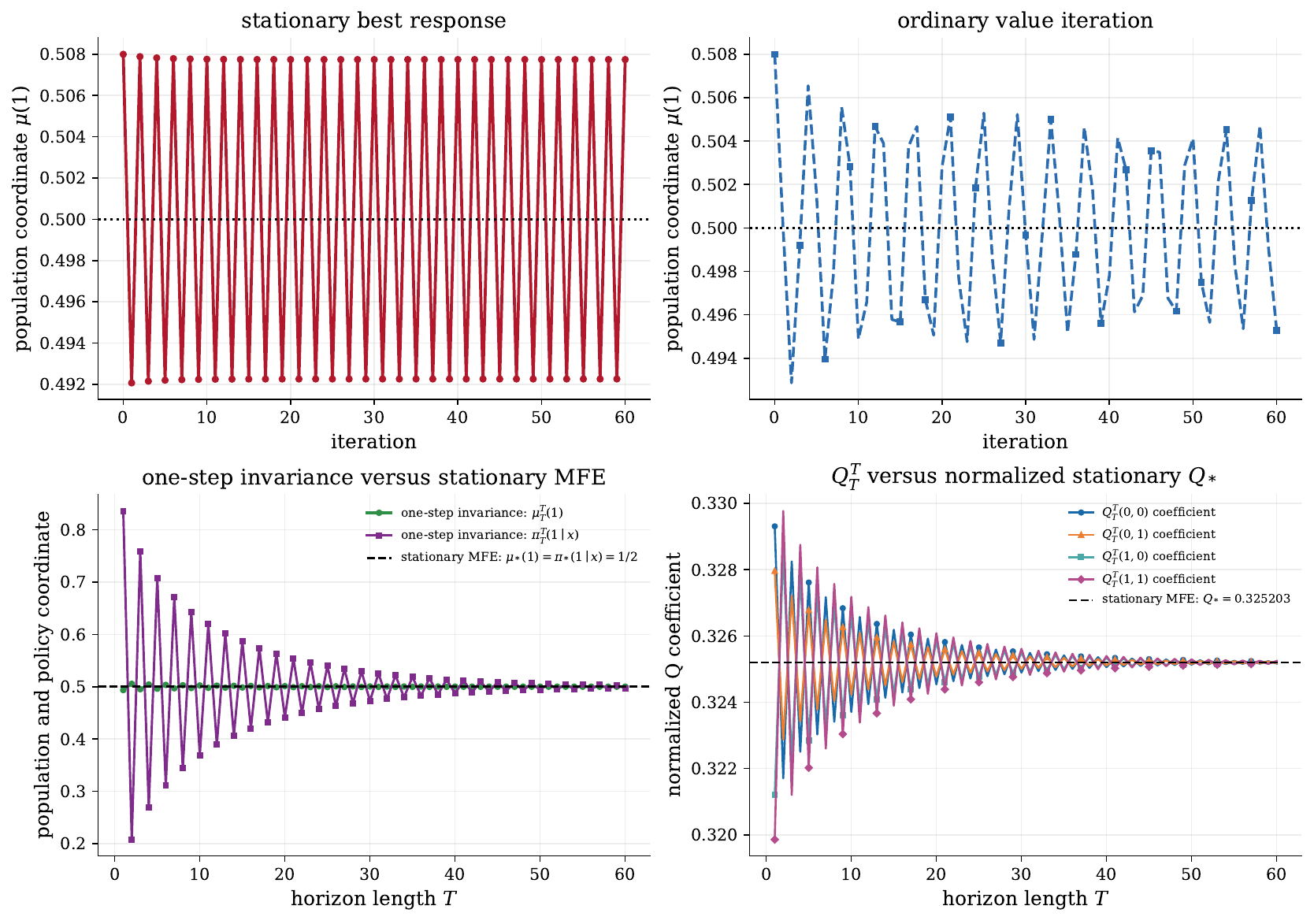}
\caption{Stationary best response approaches the two-cycle
$.4922540333$ and $.5077459667$, while ordinary value iteration remains
nonconvergent. Starting from $\mu_{\mathrm{initial}}(1)=.508$, the last two
panels show that the terminal population, terminal policy, and four affine
coefficients of $Q_T^T$ obtained from one-step invariance converge to the
corresponding stationary-MFE values as $T$ increases.}
\label{fig:numerical-example2-algorithm-comparison}
\end{figure}
}

\printbibliography
\section{Properties of $H_1$ and $H_2$ under Regularization}\label{app:a}
In this appendix, we state several results related to \(H_1\) and \(H_2\). The proofs can be found in \cite{anahtarci2023q}. We define $Q_{\min} = \min_u Q(\cdot,u)$ and $w_{\max}(x) = \max_u w(x,u)$. The pseudo norm $\| Q_{\min} \|_{\mathrm{Lip}}$ refers to the Lipchitz constant of $Q_{\min}$.
\subsection{Discounted Case}
\begin{lemma}
There exists \(M>0\) such that for any $Q \in \mathcal C_{\beta}$ we have
    \[
    \sup_{\substack{x \in X\\ u \in \mathcal P(A)}}|Q(x,u)-\rho\Omega(u)| \le \frac{M}{1-\beta}
    \text{ and }
    \sup_{\substack{x,\tilde x \in X,\\ u\in \mathcal P(A)}}| Q(x,u)-Q(\tilde x,\tilde u)| \le \frac{L_1}{1-\frac{\beta K_1}{2}} 1_{\{x \not = \tilde x\}}
    \]
    for all $x,\tilde x \in X$.
\end{lemma}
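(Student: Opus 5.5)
The lemma reduces to properties of the underlying function $Q^{\mathrm{disc}}\in\tilde{\mathcal C}_\beta$, so I will pass to that representation first. By the definition of $\mathcal C_\beta$, each $Q\in\mathcal C_\beta$ has the form
\[
Q(x,u)=\sum_{a\in A}Q^{\mathrm{disc}}(x,a)u(a)+\rho\Omega(u), \qquad Q^{\mathrm{disc}}\in\tilde{\mathcal C}_\beta .
\]
Hence $Q(x,u)-\rho\Omega(u)=\sum_a Q^{\mathrm{disc}}(x,a)u(a)$ is a convex combination of the values $Q^{\mathrm{disc}}(x,a)$.

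For the first bound, I take $M$ to be the constant $\sup C$ from the paper. This is finite because $X$ and $A$ are finite and $c$ is Lipschitz in $\mu$ on the compact set $\mathcal P(X)$. A convex combination of numbers bounded in absolute value by $\|Q^{\mathrm{disc}}\|_\infty\le M/(1-\beta)$ is itself bounded by $M/(1-\beta)$. This gives the first inequality uniformly in $(x,u)$.

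For the second bound, I read the statement with $\tilde u=u$, since the right-hand side contains no action term. Fix $u\in\mathcal P(A)$. The regularizer terms $\rho\Omega(u)$ cancel, leaving
\[
|Q(x,u)-Q(\tilde x,u)|\le \sum_a u(a)\,|Q^{\mathrm{disc}}(x,a)-Q^{\mathrm{disc}}(\tilde x,a)|.
\]
Apply the Lipschitz property in $\tilde{\mathcal C}_\beta$ with $a=\tilde a$. The action indicator vanishes, so each summand is at most $\frac{L_1}{1-\beta K_1/2}1_{\{x\neq\tilde x\}}$. Summing against $u$, which has total mass one, gives the claim.

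There is no real obstacle; the only care point is the reading of $\tilde u$. If one instead wants $\tilde u\neq u$, the regularizer would have to be controlled as well. In that case an extra term of order $\rho K$ plus an action-Lipschitz term would appear, and the statement as written would need adjusting. I would note this and prove the version with $\tilde u=u$, which is the one used later for $Q_{\min}$ Lipschitz estimates. Those estimates follow by taking the minimum over $u$ on both sides.
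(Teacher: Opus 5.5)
Your proposal is correct and matches the paper's one-line argument: both bounds come straight from the defining constraints of $\tilde{\mathcal C}_\beta$, via the representation of $Q-\rho\Omega$ as a $u$-average of $Q^{\mathrm{disc}}(x,\cdot)$, and Assumption~\ref{ass:1} is needed only to make $M<\infty$. Your reading $\tilde u=u$ is the right one, since for $x=\tilde x$ and $\tilde u\neq u$ the right-hand side is zero while $\rho\Omega$ is non-constant, so the inequality could not hold as written otherwise.
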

\begin{proof}
    Directly follows from Assumption \ref{ass:1}.
\end{proof}

\begin{lemma}\label{lem:2}
    Let \(\mathfrak q :X \times A \to \mathbb R\). Define a \(Q\)-function as $Q(\cdot,u) \equiv \sum _{a \in A} \mathfrak q(\cdot,a) u(a) + \rho\Omega(u)$ for all $u \in \mathcal P(A)$. If $\| Q-\rho\Omega\|_{\infty} \le \frac{M}{1-\beta}$ and 
    \[
    \sup_{\substack{x,\tilde x \in X,\\ u,\tilde u\in \mathcal P(A)}}| Q(x,u)-\rho\Omega(u)-Q(\tilde x,\tilde u)+\rho\Omega(\tilde u)| \le \frac{L_1}{1-\frac{\beta K_1}{2}}1_{\{x \not = \tilde x\}},
    \]
    then $\mathfrak q \in \tilde{\mathcal C}_{\beta}.$
\end{lemma}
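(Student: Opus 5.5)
The plan is to read off $\mathfrak q$ directly from $Q$ and check the two defining conditions of $\tilde{\mathcal C}_{\beta}$ one at a time. Since $Q(x,u)-\rho\Omega(u)=\sum_{a\in A}\mathfrak q(x,a)u(a)$ is affine in $u$, I will evaluate it at the Dirac measures $u=\delta_a\in\mathcal P(A)$. This gives $Q(x,\delta_a)-\rho\Omega(\delta_a)=\mathfrak q(x,a)$ for every $(x,a)\in X\times A$, so every property of $Q-\rho\Omega$ over $X\times\mathcal P(A)$ restricts to a property of $\mathfrak q$ over $X\times A$.

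\emph{Sup bound.} I specialize $\|Q-\rho\Omega\|_\infty\le \frac{M}{1-\beta}$ to the vertices $u=\delta_a$, which gives $|\mathfrak q(x,a)|\le \frac{M}{1-\beta}$ for all $(x,a)$. The definition of $\tilde{\mathcal C}_{\beta}$ also requires $\mathfrak q\ge0$ (values in $\mathbb R_+$). This does not follow from the stated hypotheses as written, and I will not try to derive it from them. In every intended application $Q$ is either $H_1$ of something or an element of $\mathcal C_\beta$, hence built from the nonnegative cost $C$, so I will add nonnegativity of $Q-\rho\Omega$ as an explicit standing hypothesis. Evaluating it at $\delta_a$ then gives $\mathfrak q\ge 0$.

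\emph{Lipschitz bound.} The target inequality is
\[
|\mathfrak q(x,a)-\mathfrak q(\tilde x,\tilde a)|\le \bar L\bigl(1_{\{x\neq\tilde x\}}+2\cdot 1_{\{a\ne\tilde a\}}\bigr).
\]
I split into three cases.
\begin{itemize}
\item If $x\ne\tilde x$, I apply the hypothesis with $u=\delta_a$ and $\tilde u=\delta_{\tilde a}$. This gives the bound $\bar L\le\bar L(1+2\cdot 1_{\{a\neq\tilde a\}})$, which suffices.
\item If $x=\tilde x$ and $a=\tilde a$, the difference is $0$ and there is nothing to show.
\item If $x=\tilde x$ and $a\neq\tilde a$, the hypothesis as printed gives the bound $0$ for every pair $u,\tilde u$. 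That forces $\mathfrak q(x,\cdot)$ to be constant, which is stronger than needed, so the required bound $2\bar L$ holds trivially.
\end{itemize}
If the intended hypothesis instead carries the full weight $1_{\{x\neq\tilde x\}}+\|u-\tilde u\|_1$ (the natural lifted form, via \cite[Proposition 1]{anahtarci2023q}), the third case follows the same way. One notes $\|\delta_a-\delta_{\tilde a}\|_1=2\cdot 1_{\{a\ne\tilde a\}}$, and substituting the Dirac measures reproduces exactly the factor $2$ in the definition.

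The only delicate point is this bookkeeping: making sure the hypothesis, read at Dirac measures, matches the weights $1_{\{x\neq\tilde x\}}+2\cdot1_{\{a\ne\tilde a\}}$ (and supplying nonnegativity). Everything else is immediate substitution. Combining the sup bound and the Lipschitz bound gives $\mathfrak q\in\tilde{\mathcal C}_{\beta}$.
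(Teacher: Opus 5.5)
Your argument is correct and is the paper's own: the paper says only that the lemma ``directly follows from the definitions,'' which amounts to your evaluation of $Q-\rho\Omega$ at the Dirac measures $\delta_a$. Your nonnegativity caveat is also justified, since the stated hypotheses cannot force $\mathfrak q\geq 0$ (the constant $\mathfrak q\equiv-\varepsilon$ with $0<\varepsilon\le \frac{M}{1-\beta}$ satisfies both hypotheses but is not in $\tilde{\mathcal C}_{\beta}$), so adding nonnegativity of $Q-\rho\Omega$ as an explicit hypothesis is the right repair.
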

\begin{proof}
    Directly follows from the definitions.
\end{proof}

The lemmas above establishes a one-to-one correspondence between $\mathcal C_{\beta}$ and $\tilde{\mathcal C}_{\beta}$. It is obvious that $(\tilde{\mathcal C}_{\beta},\| \cdot\|_{\infty})$ is compact. The lemmas above imply that the identity map between $(\tilde{\mathcal C}_{\beta},\| \cdot\|_{\infty})$ and $(\mathcal C_{\beta},\| \cdot\|_{\infty})$ is continuous and a bijection. Thus, $(\mathcal C_{\beta},\| \cdot\|_{\infty})$ is also compact.
\begin{lemma}\label{lem:3}
    For all $1>\beta>0,$ the space $(\mathcal C_{\beta},\| \cdot\|_{\infty})$ is compact.
\end{lemma}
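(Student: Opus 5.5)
The plan is to reduce compactness of $(\mathcal C_{\beta},\|\cdot\|_\infty)$ to compactness of $(\tilde{\mathcal C}_{\beta},\|\cdot\|_\infty)$ through the parametrization
\[
\Phi:\tilde{\mathcal C}_{\beta}\to\mathcal C_{\beta},\qquad \Phi(\mathfrak q)(x,u)=\sum_{a\in A}\mathfrak q(x,a)u(a)+\rho\Omega(u),
\]
and then use that a continuous image of a compact set is compact.

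\textbf{Compactness of $\tilde{\mathcal C}_{\beta}$.} Since $X$ and $A$ are finite, $\tilde{\mathcal C}_{\beta}$ is a subset of $\mathbb R^{|X|\,|A|}$, and the uniform norm is a norm on this finite-dimensional space. The set is bounded, because $\|\mathfrak q\|_\infty\le M/(1-\beta)$. It is closed, because it is cut out by the non-strict inequalities $0\le \mathfrak q(x,a)\le M/(1-\beta)$ and $|\mathfrak q(x,a)-\mathfrak q(\tilde x,\tilde a)|\le \bar L\,(1_{\{x\neq\tilde x\}}+2\cdot 1_{\{a\neq\tilde a\}})$. Each of these is preserved under pointwise limits. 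Heine--Borel therefore gives compactness.

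\textbf{Continuity and surjectivity of $\Phi$.} For $\mathfrak q,\mathfrak q'\in\tilde{\mathcal C}_{\beta}$ the regularizer term cancels, so
\[
|\Phi(\mathfrak q)(x,u)-\Phi(\mathfrak q')(x,u)| = \Big|\sum_a(\mathfrak q-\mathfrak q')(x,a)u(a)\Big| \le \|\mathfrak q-\mathfrak q'\|_\infty ,
\]
since $u$ is a probability vector. Taking the supremum over $(x,u)\in X\times\mathcal P(A)$ shows that $\Phi$ is $1$-Lipschitz into $(\mathcal C_{\beta},\|\cdot\|_\infty)$. The differences $\Phi(\mathfrak q)-\Phi(\mathfrak q')$ are bounded functions, so $\|\cdot\|_\infty$ is a genuine metric on $\mathcal C_{\beta}$ even if $\Omega$ itself were unbounded. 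Surjectivity holds by the definition of $\mathcal C_{\beta}$. Hence $\mathcal C_{\beta}=\Phi(\tilde{\mathcal C}_{\beta})$ is compact.

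This argument needs neither injectivity of $\Phi$ nor the correspondence of Lemma~\ref{lem:2}, so the only real step is the compactness of $\tilde{\mathcal C}_{\beta}$. Injectivity does in fact hold, since evaluating at the vertices $u=\delta_a$ recovers $\mathfrak q(x,a)+\rho\Omega(\delta_a)$. It is not needed here.
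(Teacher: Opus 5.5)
Your proof is correct and follows essentially the same route as the paper: the paper obtains compactness of $\tilde{\mathcal C}_{\beta}$ from Arzel\`a--Ascoli (on the finite set $X\times A$ this is just your Heine--Borel argument) and transfers it to $\mathcal C_{\beta}$ through the continuous correspondence $\mathfrak q\mapsto \sum_{a}\mathfrak q(\cdot,a)u(a)+\rho\Omega(u)$. Your remark that only continuity and surjectivity of this map are needed, so the bijection provided via Lemma~\ref{lem:2} is not required, is a correct and slightly cleaner version of the same argument.
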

\begin{proof}
    By the Arzela-Ascoli theorem, the set \(\mathcal {\tilde C}_{\beta}\) is compact under the uniform norm. The result then follows from Lemma \ref{lem:2}.
\end{proof}

\begin{lemma}\label{lem:4}
    Suppose that Assumption \ref{ass:1} holds. For a fixed $\mu \in \mathcal P(X)$, the operator $H_1(\cdot,\mu)$ maps $\mathcal C_{\beta}$ to itself.
\end{lemma}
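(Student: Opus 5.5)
The plan is to reduce everything to the unregularized part of the $Q$-function and check the two defining inequalities of $\tilde{\mathcal C}_\beta$ for the image. Fix $\mu$ and $Q\in\mathcal C_\beta$, so that $Q(x,u)=\sum_a\mathfrak q(x,a)u(a)+\rho\Omega(u)$ with $\mathfrak q\in\tilde{\mathcal C}_\beta$. Put $Q_{\min}(y)=\min_{v}Q(y,v)$. This minimum exists and is finite because $\mathcal P(A)$ is compact and $Q(y,\cdot)$ is continuous. Then
\[
H_1(Q,\mu)(x,u)=\sum_{a}\Bigl[c(x,a,\mu)+\beta\sum_y Q_{\min}(y)p(y\mid x,a,\mu)\Bigr]u(a)+\rho\Omega(u),
\]
because both $C$ and $P$ are affine in $u$. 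So $H_1(Q,\mu)$ has the required form with $\mathfrak q'(x,a)=c(x,a,\mu)+\beta\sum_y Q_{\min}(y)p(y\mid x,a,\mu)$. By Lemma \ref{lem:2} (or directly from the definition of $\mathcal C_\beta$), it remains to show $\mathfrak q'\in\tilde{\mathcal C}_\beta$.

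\textbf{Step 1: nonnegativity and the sup bound.} The function $\mathfrak q'$ is nonnegative whenever $Q_{\min}\ge 0$, since $c\ge 0$. For the upper bound, use $Q_{\min}(y)\le Q(y,\delta_a)=\mathfrak q(y,a)+\rho\Omega(\delta_a)$. This gives $\|\mathfrak q'\|_\infty\le M+\beta\frac{M}{1-\beta}=\frac{M}{1-\beta}$, provided the regularizer contribution is controlled. Here I would normalize $\Omega$ by an additive constant so that $\min_a\Omega(\delta_a)=0$, or otherwise absorb $\rho K$ into $M$, as the paper implicitly does with ``there exists $M$''. For the lower bound, $Q_{\min}\ge \min\mathfrak q+\rho\min\Omega\ge 0$ after the same normalization ($\Omega\ge 0$).

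\textbf{Step 2: the Lipschitz bound.} This is the main step. First, $Q_{\min}$ inherits the state-Lipschitz constant of $\mathfrak q$. For fixed $v$, $|Q(y,v)-Q(\tilde y,v)|\le\sum_a|\mathfrak q(y,a)-\mathfrak q(\tilde y,a)|v(a)\le\bar L\,1_{\{y\ne\tilde y\}}$, and taking minima preserves this. Hence the oscillation of $Q_{\min}$ over $X$ is at most $\bar L$. Then split
\[
\mathfrak q'(x,a)-\mathfrak q'(\tilde x,\tilde a)=\bigl[c(x,a,\mu)-c(\tilde x,\tilde a,\mu)\bigr]+\beta\sum_y Q_{\min}(y)\bigl[p(y\mid x,a,\mu)-p(y\mid\tilde x,\tilde a,\mu)\bigr].
\]
The first term is at most $L_1(1_{\{x\ne\tilde x\}}+2\cdot1_{\{a\ne\tilde a\}})$. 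For the second term, the integral of a function against the difference of two probability measures is unchanged by subtracting a constant. Subtracting the midrange of $Q_{\min}$ bounds it by $\frac{\bar L}{2}\|p(\cdot\mid x,a,\mu)-p(\cdot\mid\tilde x,\tilde a,\mu)\|_1\le\frac{\bar L K_1}{2}(1_{\{x\ne\tilde x\}}+2\cdot1_{\{a\ne\tilde a\}})$. Summing the two terms gives the constant $L_1+\beta\frac{K_1}{2}\bar L=\bar L$, since $\bar L=L_1/(1-\beta K_1/2)$ is exactly the fixed point of $\ell\mapsto L_1+\frac{\beta K_1}{2}\ell$.

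The only delicate point is this halving trick (total variation against oscillation rather than sup norm), which produces the factor $K_1/2$. The normalization issue for $\Omega$ in Step 1 is secondary. Combining the two steps with Lemma \ref{lem:2} shows $H_1(Q,\mu)\in\mathcal C_\beta$.
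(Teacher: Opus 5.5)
Your argument is correct and is essentially the standard proof of \cite[Lemma~1]{anahtarci2023q}, to which the paper simply defers: you write $H_1(Q,\mu)$ in the affine form $\sum_a \mathfrak q'(x,a)u(a)+\rho\Omega(u)$, bound the oscillation of $Q_{\min}$ by $\bar L$, use the total-variation halving trick to obtain $\frac{\bar L K_1}{2}$, and close with $\bar L=L_1+\frac{\beta K_1}{2}\bar L$. The only slip is in Step 1, where your two normalizations differ: $\min_a\Omega(\delta_a)=0$ typically forces $\Omega<0$ in the interior of $\mathcal P(A)$, which breaks $\mathfrak q'\ge 0$, so you should instead normalize $\min_{u\in\mathcal P(A)}\Omega(u)=0$ (as in the paper's numerical example), which gives $Q_{\min}\ge 0$ and, comparing with the minimizer $u_\Omega$ of $\Omega$ rather than with a vertex, also gives $Q_{\min}(y)\le Q(y,u_\Omega)=\sum_a\mathfrak q(y,a)u_\Omega(a)\le\frac{M}{1-\beta}$.
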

\begin{proof}
    This result can be proved using the same lines of proof as in \cite[Lemma 1]{anahtarci2023q}.
\end{proof}

\begin{lemma}\label{lem:as}
    Suppose that Assumption \ref{ass:1} holds. Then, for all \( x, \tilde{x} \in X \), \( u, \tilde{u} \in \mathcal P(A) \), and \( \mu, \tilde{\mu} \in \mathcal P(X) \), $C$ and $P$ satisfy the following Lipschitz bounds:
\[
|C(x, u, \mu) - C(\tilde{x}, \tilde{u}, \tilde{\mu})| \leq L_1
\left( \mathbf{1}_{\{x \neq \tilde{x}\}} + \|u - \tilde{u}\|_1 + \|\mu - \tilde{\mu}\|_{1} \right),
\]

\[
\|P(\cdot \mid x, u, \mu) - P(\cdot \mid \tilde{x}, \tilde{u}, \tilde{\mu})\|_{1} \leq K_1
\left( \mathbf{1}_{\{x \neq \tilde{x}\}} + \|u - \tilde{u}\|_1 + \|\mu - \tilde{\mu}\|_{1} \right).
\]
\end{lemma}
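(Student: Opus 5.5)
The proof reduces both bounds to the original Lipschitz data $(c,p)$ in Assumption~\ref{ass:1}, using the fact that $C$ and $P$ are the affine extensions of $c$ and $p$ in the action variable. I will use the triangle inequality to split each difference into two parts. The first part varies only $(x,\mu)$ with $u$ fixed. The second part varies only $u$ with $(\tilde x,\tilde\mu)$ fixed. Concretely,
\[
|C(x,u,\mu)-C(\tilde x,\tilde u,\tilde\mu)| \le |C(x,u,\mu)-C(\tilde x,u,\tilde\mu)| + |C(\tilde x,u,\tilde\mu)-C(\tilde x,\tilde u,\tilde\mu)|,
\]
and the same splitting applies to $P$ in total variation.

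\textbf{First part.} Since $u$ is a probability vector,
\[
C(x,u,\mu)-C(\tilde x,u,\tilde\mu)=\sum_a u(a)\bigl(c(x,a,\mu)-c(\tilde x,a,\tilde\mu)\bigr).
\]
Applying Assumption~\ref{ass:1} with $a=\hat a$ to each term and averaging over $u$ gives the bound $L_1(1_{\{x\ne\tilde x\}}+\|\mu-\tilde\mu\|_1)$. For $P$, convexity of the $1$-norm gives the corresponding bound with $K_1$.

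\textbf{Second part.} This step needs the more careful argument. Here
\[
C(\tilde x,u,\tilde\mu)-C(\tilde x,\tilde u,\tilde\mu)=\sum_a c(\tilde x,a,\tilde\mu)\bigl(u(a)-\tilde u(a)\bigr).
\]
Because $\sum_a(u(a)-\tilde u(a))=0$, I can subtract any constant $k$ from $c(\tilde x,\cdot,\tilde\mu)$ without changing the sum. This gives the bound $\|u-\tilde u\|_1\max_a|c(\tilde x,a,\tilde\mu)-k|$. Choosing $k$ as the midpoint of the range of $a\mapsto c(\tilde x,a,\tilde\mu)$ makes this maximum equal to half the oscillation of $c$ in $a$. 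By Assumption~\ref{ass:1}, that oscillation is at most $2L_1$, since $a\ne\hat a$ carries the factor $2$. The second part is therefore at most $L_1\|u-\tilde u\|_1$.

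The factor $2$ in Assumption~\ref{ass:1} is exactly what makes the constant $L_1$, rather than $2L_1$, come out. Equivalently, one can write $u-\tilde u=(u-\tilde u)^+-(u-\tilde u)^-$, note that both pieces have mass $\tfrac12\|u-\tilde u\|_1$, and pair them through any coupling; each paired difference is at most $2L_1$. For $P$ the same argument applies vectorwise. Writing
\[
\sum_a p(\cdot\mid\tilde x,a,\tilde\mu)\bigl(u(a)-\tilde u(a)\bigr)
\]
as the difference of two mixtures, each with mass $\tfrac12\|u-\tilde u\|_1$, and coupling them yields
\[
\tfrac12\|u-\tilde u\|_1\max_{a,b}\|p(\cdot\mid\tilde x,a,\tilde\mu)-p(\cdot\mid\tilde x,b,\tilde\mu)\|_1\le K_1\|u-\tilde u\|_1.
\]

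Adding the two parts gives both claimed inequalities. The only nontrivial step is the mass-$\tfrac12$ coupling in the second part; this is the argument of \cite[Proposition 1]{anahtarci2023q}, which I would cite for comparison.
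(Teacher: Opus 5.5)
Your proposal is correct and is essentially the same approach as the paper. The paper's proof is a one-line citation of \cite[Proposition 1]{anahtarci2023q}, and you have written out the standard argument behind it: affine averaging handles the $(x,\mu)$-variation, and a mass-$\tfrac12$ coupling (Kantorovich--Rubinstein-type) estimate handles the $u$-variation, where the factor $2$ in Assumption~\ref{ass:1} is what yields the constants $L_1$ and $K_1$.
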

\begin{proof}
    This result directly follows from \cite[Proposition 1]{anahtarci2023q}.
\end{proof}

\begin{lemma}\label{lem:7}
Suppose that Assumption \ref{ass:1} holds. Let $Q^1,Q^2\in \mathcal C_{\beta}$ and $\mu_1,\mu_2 \in \mathcal P(X)$. By \(\| Q_{\min} \|_{\mathrm{Lip}}\), denote the Lipschitz coefficent of \(Q_{\min}\) under the norm \( \| \cdot \|_{\infty}\). Then, we have
\[
\|H_1(Q^1,\mu_1) - H_1(Q^2,\mu_2)\|_{\infty}  \le  K_1\| Q^1_{\min} \|_{\mathrm{Lip}}\| \mu_1 - \mu_2\|_1 + \beta \| Q^1-Q^2\|_{\infty}.
\]
\end{lemma}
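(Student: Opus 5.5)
The plan is to expand both operators pointwise at a fixed pair $(x,u)\in X\times\mathcal P(A)$ via \eqref{eq:q-it}, and then split the difference by inserting the mixed term $\beta\sum_{y}Q^1_{\min}(y)P(y\mid x,u,\mu_2)$. The regularizer $\rho\Omega(u)$ cancels exactly because $u$ is the same in both terms. This leaves three pieces:
\[
\underbrace{C(x,u,\mu_1)-C(x,u,\mu_2)}_{(\mathrm I)}
+\underbrace{\beta\sum_{y}Q^1_{\min}(y)\bigl(P(y\mid x,u,\mu_1)-P(y\mid x,u,\mu_2)\bigr)}_{(\mathrm{II})}
+\underbrace{\beta\sum_{y}\bigl(Q^1_{\min}(y)-Q^2_{\min}(y)\bigr)P(y\mid x,u,\mu_2)}_{(\mathrm{III})}.
\]
Term (III) is routine. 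Since $|\min_v Q^1(y,v)-\min_v Q^2(y,v)|\le\sup_v|Q^1(y,v)-Q^2(y,v)|$ and $P(\cdot\mid x,u,\mu_2)$ is a probability vector, we get $|(\mathrm{III})|\le\beta\|Q^1-Q^2\|_\infty$, which is the second summand of the claim.

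For term (II) I would use the standard total-variation/oscillation trick. Because $P(\cdot\mid x,u,\mu_1)-P(\cdot\mid x,u,\mu_2)$ has zero total mass, we may subtract any constant from $Q^1_{\min}$ without changing the sum. This gives $|(\mathrm{II})|\le\frac{\beta}{2}\operatorname{osc}(Q^1_{\min})\,\|P(\cdot\mid x,u,\mu_1)-P(\cdot\mid x,u,\mu_2)\|_1$. With the discrete metric on $X$, $\operatorname{osc}(Q^1_{\min})=\|Q^1_{\min}\|_{\mathrm{Lip}}$. Lemma~\ref{lem:as} then bounds the kernel difference by $K_1\|\mu_1-\mu_2\|_1$. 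Hence $|(\mathrm{II})|\le\frac{\beta}{2}K_1\|Q^1_{\min}\|_{\mathrm{Lip}}\|\mu_1-\mu_2\|_1\le K_1\|Q^1_{\min}\|_{\mathrm{Lip}}\|\mu_1-\mu_2\|_1$, which fits inside the first summand of the claim with room to spare (a factor $\beta/2$).

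The main obstacle is term (I). Lemma~\ref{lem:as} only gives $|(\mathrm I)|\le L_1\|\mu_1-\mu_2\|_1$, and the displayed inequality has no separate $L_1$ term. The only slack available to absorb it is the unused factor $(1-\beta/2)K_1\|Q^1_{\min}\|_{\mathrm{Lip}}\|\mu_1-\mu_2\|_1$ left over from term (II). The first thing I would try is a lower bound on the Lipschitz constant of $Q^1_{\min}$: I would look for an estimate of the form $L_1\le(1-\beta/2)K_1\|Q^1_{\min}\|_{\mathrm{Lip}}$ using the structure of $\mathcal C_\beta$ and Lemma~\ref{lem:4}. Nothing in the definition of $\mathcal C_\beta$ forces $Q^1_{\min}$ to be non-constant, however. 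If $Q^1_{\min}$ is constant, the right-hand side reduces to $\beta\|Q^1-Q^2\|_\infty$. Taking $Q^1=Q^2$, the stated inequality then requires $C(x,u,\mu_1)=C(x,u,\mu_2)$, i.e.\ that $C$ not depend on $\mu$.

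I therefore expect that the inequality can be proved exactly as stated only in one of two situations: when the cost is independent of the mean field, or when $\|Q^1_{\min}\|_{\mathrm{Lip}}$ is understood as a constant that already dominates the $L_1$ contribution. An example of the latter is replacing it by $\bar L$ and adding the convention that $K_1\bar L$ absorbs $L_1$, as in the constants of \cite{anahtarci2023q}. Under either reading, combining (I)--(III) and taking the supremum over $(x,u)$ finishes the proof. Without such a convention, the argument above closes every step except the cost term, and the constant case shows that no proof of the literal inequality exists for general $\mu$-dependent $C$.
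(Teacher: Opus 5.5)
Your three-term split is the argument the paper intends. The paper's proof only cites \cite[Lemma 2]{anahtarci2023q}, and it runs the same add-and-subtract step for the weighted analogue \eqref{eq:quaso2} in Lemma~\ref{lem:9}. Your bounds on (II), via the oscillation trick, and on (III) are correct.

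Your objection to the literal statement is also correct. Take $Q^1=Q^2=\rho\Omega$, which lies in $\mathcal C_\beta$ with $Q^{\mathrm{disc}}\equiv 0$. Its $Q_{\min}$ is constant, so the right-hand side is $0$, while the left-hand side equals $\sup_{x,u}|C(x,u,\mu_1)-C(x,u,\mu_2)|$. That is nonzero for $\mu$-dependent costs such as the one in Section~\ref{sec:numerical-example2}. The displayed inequality is missing the $L_1$ contribution of the cost and cannot be proved as written.

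Where you go wrong is the repair. Replacing $\|Q^1_{\min}\|_{\mathrm{Lip}}$ by $\bar L$ and letting $K_1\bar L$ absorb $L_1$ would require $K_1\bar L\ge L_1+\tfrac{\beta K_1}{2}\bar L$. Since $L_1=\bar L(1-\tfrac{\beta K_1}{2})$, this is equivalent to $K_1\ge 1$. The resulting inequality is false in the paper's own example. There $K_1\bar L<0.013$, while for $Q^1=Q^2=\rho\Omega$ and $(x,u)=(1,\delta_0)$ the left-hand side is $0.3125\|\mu_1-\mu_2\|_1$.

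The version the paper actually uses downstream is
\[
\|H_1(Q^1,\mu_1)-H_1(Q^2,\mu_2)\|_\infty\le \bar L\|\mu_1-\mu_2\|_1+\beta\|Q^1-Q^2\|_\infty .
\]
Three places confirm this. Convention~\ref{conv:5} calls $\bar L$ the state-measure Lipschitz coefficient of $H_1$. The matrix in Lemma~\ref{lem:33333} has entries $\bar L K_1/\rho$ times powers of $K_{\mathrm D}$, not $K_1^2\bar L/\rho$. And \eqref{eq:quaso2} has exactly this form.

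Your three terms give this bound with no slack. For $Q^1\in\mathcal C_\beta$ you have $|Q^1(x,u)-Q^1(\tilde x,u)|\le\bar L$, hence $\|Q^1_{\min}\|_{\mathrm{Lip}}\le\bar L$. Therefore
\[
|(\mathrm I)|+|(\mathrm{II})|\le\bigl(L_1+\tfrac{\beta K_1}{2}\bar L\bigr)\|\mu_1-\mu_2\|_1=\bar L\|\mu_1-\mu_2\|_1 .
\]
So the coefficient in the lemma should read $L_1+\frac{\beta K_1}{2}\|Q^1_{\min}\|_{\mathrm{Lip}}$, not $K_1\|Q^1_{\min}\|_{\mathrm{Lip}}$. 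The factor $\beta/2$ you treated as spare slack is exactly what makes the constant close up to $\bar L$.
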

\begin{proof}
    This result can be proved using the same lines of proof as in \cite[Lemma 2]{anahtarci2023q}.
\end{proof}

\begin{lemma}\label{lem:8}
    Suppose that Assumption \ref{ass:1} holds. Let $\mu,\tilde \mu \in \mathcal P(X)$ and $Q,\tilde Q \in \mathcal C_{\beta}$. Then, we have
    \[
    \| H_{2}(Q,\mu) - H_{2}(\tilde Q,\tilde \mu) \|_1
    \leq \frac {K_1}{\rho}\| Q - \tilde Q \|_{\infty}
    +K_{\mathrm D}\|\mu-\tilde \mu\|_1.
    \]
\end{lemma}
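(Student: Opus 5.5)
The plan is to split the difference $H_2(Q,\mu)-H_2(\tilde Q,\tilde\mu)$ into a policy part and a state-measure part. Write $\pi_Q(x)\in\mathcal P(A)$ for the unique minimizer of $Q(x,\cdot)$. The telescoping decomposition is
\[
H_2(Q,\mu)-H_2(\tilde Q,\tilde\mu)=\sum_{x}\bigl[P(\cdot\mid x,\pi_Q(x),\mu)-P(\cdot\mid x,\pi_{\tilde Q}(x),\mu)\bigr]\mu(x)+\bigl[H_2(\tilde Q,\mu)-H_2(\tilde Q,\tilde\mu)\bigr].
\]

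\textbf{Policy term.} By Lemma~\ref{lem:as}, the first bracket is bounded in $\|\cdot\|_1$ by $K_1\sup_x\|\pi_Q(x)-\pi_{\tilde Q}(x)\|_1$. To control the policy gap, fix $x$. We have $Q(x,u)=\sum_a\mathfrak q(x,a)u(a)+\rho\Omega(u)$, where the affine part is linear in $u$ and $\rho\Omega$ is $\rho$-strongly convex in $\|\cdot\|_1$.

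Set $u=\pi_Q(x)$ and $\tilde u=\pi_{\tilde Q}(x)$. Strong convexity at the minimizers gives
\[
Q(x,\tilde u)-Q(x,u)\ge\tfrac\rho2\|u-\tilde u\|_1^2,
\qquad
\tilde Q(x,u)-\tilde Q(x,\tilde u)\ge\tfrac\rho2\|u-\tilde u\|_1^2.
\]
Adding the two inequalities, the regularizer cancels. The left side becomes $\langle(\mathfrak q-\tilde{\mathfrak q})(x,\cdot),\tilde u-u\rangle$, which is at most $\|\mathfrak q-\tilde{\mathfrak q}\|_\infty\|u-\tilde u\|_1$. Since the regularizer is common to both, $\|\mathfrak q-\tilde{\mathfrak q}\|_\infty\le\|Q-\tilde Q\|_\infty$ (evaluate at Dirac measures). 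Hence $\|u-\tilde u\|_1\le\frac1\rho\|Q-\tilde Q\|_\infty$, and the policy term contributes at most $\frac{K_1}{\rho}\|Q-\tilde Q\|_\infty$.

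\textbf{State-measure term (main obstacle).} Here the policy $\pi:=\pi_{\tilde Q}$ is fixed, and the goal is the coefficient $K_{\mathrm D}=K_1+\min\{1,\tfrac32K_1\}$. First split off the direct dependence:
\[
\sum_x\bigl[P(\cdot\mid x,\pi(x),\mu)-P(\cdot\mid x,\pi(x),\tilde\mu)\bigr]\mu(x),
\]
which by Lemma~\ref{lem:as} is bounded by $K_1\|\mu-\tilde\mu\|_1$. What remains is $(\mu-\tilde\mu)R$, where $R(x,\cdot)=P(\cdot\mid x,\pi(x),\tilde\mu)$ is a Markov kernel.

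Because $\mu-\tilde\mu$ has zero total mass, Dobrushin's argument gives
\[
\|(\mu-\tilde\mu)R\|_1\le\delta(R)\|\mu-\tilde\mu\|_1,
\qquad
\delta(R)=\tfrac12\sup_{x,x'}\|R(x,\cdot)-R(x',\cdot)\|_1.
\]
The trivial bound is $\delta(R)\le1$. Alternatively, Lemma~\ref{lem:as} with $x\neq x'$ and $\|\pi(x)-\pi(x')\|_1\le2$ gives $\|R(x,\cdot)-R(x',\cdot)\|_1\le K_1(1+2)=3K_1$, so $\delta(R)\le\tfrac32K_1$. Hence $\delta(R)\le\min\{1,\tfrac32K_1\}$.

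The delicate step is justifying the Dobrushin contraction for signed zero-mass measures. I would prove it directly: write $\mu-\tilde\mu=\tfrac12\|\mu-\tilde\mu\|_1(\nu_+-\nu_-)$ with $\nu_\pm$ probability measures, and bound $\|\nu_+R-\nu_-R\|_1$ by coupling the two sums over pairs $(x,x')$ and using convexity of the norm.

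Summing the three contributions gives exactly
\[
\| H_{2}(Q,\mu) - H_{2}(\tilde Q,\tilde \mu) \|_1\le\frac{K_1}{\rho}\|Q-\tilde Q\|_\infty+K_{\mathrm D}\|\mu-\tilde\mu\|_1.
\]
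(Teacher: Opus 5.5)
Your proposal is correct and follows essentially the same route as the paper. It uses the same three-term split: a policy-change term bounded via the $\frac{1}{\rho}$ minimizer sensitivity, a direct mean-field term bounded by $K_1\|\mu-\tilde\mu\|_1$, and a Dobrushin term with coefficient $\min\{1,\tfrac32K_1\}$ proved through $\mu-\tilde\mu=\tfrac12\|\mu-\tilde\mu\|_1(\nu_+-\nu_-)$. The differences are cosmetic: you telescope in a different order (policy and mean-field terms weighted by $\mu$, Dobrushin kernel at $(\tilde Q,\tilde\mu)$), and you derive the $\frac{1}{\rho}$ policy bound directly from strong convexity at the two minimizers rather than citing it.
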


\begin{proof}
For $Q\in\mathcal C_\beta$, let $u_Q(x):=\underset{u\in\mathcal P(A)}{\operatorname{argmin}}\,Q(x,u).$
By \cite[Lemma 3]{anahtarci2023q}, the strong convexity estimate for the
minimizers gives
\begin{equation}\label{eq:h2-policy-sensitivity}
\sup_{x\in X}\|u_Q(x)-u_{\tilde Q}(x)\|_1
\leq \frac{1}{\rho}\|Q-\tilde Q\|_\infty.
\end{equation}

For fixed $(Q,\mu)$, Lemma \ref{lem:as} and the elementary estimate
$\|u_Q(x)-u_Q(\tilde x)\|_1\leq 2$ imply
\[
\sup_{x,\tilde x\in X}
\frac{1}{2}
\left\|
P(\cdot\mid x,u_Q(x),\mu)
-
P(\cdot\mid \tilde x,u_Q(\tilde x),\mu)
\right\|_1
\leq \frac{3}{2}K_1.
\]
Since the left-hand side is also bounded above by $1$, we have
\[
\sup_{x,\tilde x\in X}
\frac{1}{2}
\left\|
P(\cdot\mid x,u_Q(x),\mu)
-
P(\cdot\mid \tilde x,u_Q(\tilde x),\mu)
\right\|_1
\leq
\min\left\{1,\frac{3}{2}K_1\right\}.
\]

By the definition of $H_2$, for every $y\in X$,
\[
H_2(Q,\mu)(y) = \sum_{x\in X}
P(y\mid x,u_Q(x),\mu)\, \mu(x).
\]
Therefore,
\begin{align*}
\|H_2(Q,\mu)-H_2(\tilde Q,\tilde\mu)\|_1
&\leq
\left\|
\sum_{x\in X}
P(\cdot\mid x,u_Q(x),\mu)(\mu(x)-\tilde\mu(x))
\right\|_1
\\
&\quad+
\left\|
\sum_{x\in X}
\Bigl(
P(\cdot\mid x,u_Q(x),\mu)
-
P(\cdot\mid x,u_Q(x),\tilde\mu)
\Bigr)\tilde\mu(x)
\right\|_1
\\
&\quad+
\left\|
\sum_{x\in X}
\Bigl(
P(\cdot\mid x,u_Q(x),\tilde\mu)
-
P(\cdot\mid x,u_{\tilde Q}(x),\tilde\mu)
\Bigr)\tilde\mu(x)
\right\|_1.
\end{align*}

Repeating the argument in
\cite[Equations (1.12), (1.5), and (1.19)]{dobrushin}, we have
\begin{align*}
&\left\|
\sum_{x\in X}
(\mu(x)-\tilde\mu(x))
P(\cdot\mid x,u_Q(x),\mu)
\right\|_1
\\
&=
\frac{1}{2}\|\mu-\tilde\mu\|_1
\left\|
\sum_{x,\tilde x\in X}
\frac{2(\mu(x)-\tilde\mu(x))_+}
{\|\mu-\tilde\mu\|_1}
\frac{2(\tilde\mu(\tilde x)-\mu(\tilde x))_+}
{\|\mu-\tilde\mu\|_1}
\,
\Bigl(
P(\cdot\mid x,u_Q(x),\mu)
-
P(\cdot\mid \tilde x,u_Q(\tilde x),\mu)
\Bigr)
\right\|_1
\\
&\leq
\frac{1}{2}\|\mu-\tilde\mu\|_1
\sum_{x,\tilde x\in X}
\frac{2(\mu(x)-\tilde\mu(x))_+}
{\|\mu-\tilde\mu\|_1}
\frac{2(\tilde\mu(\tilde x)-\mu(\tilde x))_+}
{\|\mu-\tilde\mu\|_1}
\,
\left\|
P(\cdot\mid x,u_Q(x),\mu)
-
P(\cdot\mid \tilde x,u_Q(\tilde x),\mu)
\right\|_1
\\
&\leq
\frac{1}{2}\|\mu-\tilde\mu\|_1
\sum_{x,\tilde x\in X}
\frac{2(\mu(x)-\tilde\mu(x))_+}
{\|\mu-\tilde\mu\|_1}
\frac{2(\tilde\mu(\tilde x)-\mu(\tilde x))_+}
{\|\mu-\tilde\mu\|_1}
\,
2\min\left\{1,\frac{3}{2}K_1\right\}
\\
&=
\min\left\{1,\frac{3}{2}K_1\right\}
\|\mu-\tilde\mu\|_1,
\end{align*}
where the last line follows from
$\sum_{x\in X}\bigl(\mu(x)-\tilde\mu(x)\bigr)=0,$
which implies
\[
\sum_{x\in X}\underbrace{\max\{\mu(x)-\tilde\mu(x),0\}}_{=(\mu(x)-\tilde \mu(x))_+}
=
\sum_{x\in X}\underbrace{\max\{\tilde\mu(x)-\mu(x),0\}}_{= (\mu(x)-\tilde \mu(x))_-}
=
\frac{1}{2}\|\mu-\tilde\mu\|_1.
\]

For the second term, the Lipschitz continuity of $P$ gives
\begin{align*}
&\left\|
\sum_{x\in X}
\tilde\mu(x)
\Bigl(
P(\cdot\mid x,u_Q(x),\mu)
-
P(\cdot\mid x,u_Q(x),\tilde\mu)
\Bigr)
\right\|_1
\\
&\leq
\sum_{x\in X}
\tilde\mu(x)
\left\|
P(\cdot\mid x,u_Q(x),\mu)
-
P(\cdot\mid x,u_Q(x),\tilde\mu)
\right\|_1
\\
&\leq
K_1\|\mu-\tilde\mu\|_1.
\end{align*}

Similarly, for the third term,
\begin{align*}
\left\|
\sum_{x\in X}
\tilde\mu(x)
\Bigl(
P(\cdot\mid x,u_Q(x),\tilde\mu)
-
P(\cdot\mid x,u_{\tilde Q}(x),\tilde\mu)
\Bigr)
\right\|_1
&\leq
K_1
\sum_{x\in X}
\tilde\mu(x)
\|u_Q(x)-u_{\tilde Q}(x)\|_1
\\
&\leq
\frac{K_1}{\rho}\|Q-\tilde Q\|_\infty,
\end{align*}
where the last inequality follows from
\eqref{eq:h2-policy-sensitivity}.

Combining the three estimates yields
\[
\|H_2(Q,\mu)-H_2(\tilde Q,\tilde\mu)\|_1
\leq
\left(
K_1+\min\left\{1,\frac{3}{2}K_1\right\}
\right)
\|\mu-\tilde\mu\|_1
+
\frac{K_1}{\rho}\|Q-\tilde Q\|_\infty.
\]
The definition of $K_{\mathrm D}$ now proves the claim.
\end{proof}
\subsection{The minorized discounted-cost case}
The following lemma is the weighted version of Lemmas \ref{lem:7} and \ref{lem:8} which we will use in the proof of Theorem \ref{thrm:avg}.

\begin{lemma}\label{lem:9}
    \begin{enumerate}
        \item []
        \item  For all \(\mu,\tilde \mu \in \mathcal P(X)\) and \(Q,\tilde Q \in \mathcal C_{\alpha\beta}\), it holds that
    \begin{equation}\label{eq:quaso1}
    \| H_{2}(Q,\mu) - H_{2}(\tilde Q,\tilde \mu) \|_1
    \leq \frac {K_1}{\rho}W\| Q - \tilde Q \|_{\omega}
    +K_{\mathrm D}\|\mu-\tilde \mu\|_1.
    \end{equation}
    \item For all \(\mu_1,\mu_2 \in \mathcal P(X)\) and \(Q^1,Q^2 \in \mathcal C_{\alpha\beta}\), it holds that
    \begin{equation}\label{eq:quaso2}
\|H^{\mathrm{avg}}_1(Q^1,\mu_1) - H_1^{\mathrm{avg}}(Q^2,\mu_2)\|_{\omega}
\leq \bar L\| \mu_1 - \mu_2\|_1 + \alpha\beta \| Q^1-Q^2\|_{\omega}.
    \end{equation}
    \end{enumerate}
   
\end{lemma}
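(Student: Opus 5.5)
We prove the two estimates of Lemma~\ref{lem:9} by repeating the proofs of Lemmas~\ref{lem:8} and~\ref{lem:7}, tracking the weighted norm $\|\cdot\|_\omega$ in place of the uniform norm.

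\textbf{First estimate \eqref{eq:quaso1}.} We split $\|H_2(Q,\mu)-H_2(\tilde Q,\tilde\mu)\|_1$ into the same three terms as in the proof of Lemma~\ref{lem:8}. The first two terms involve only $\mu,\tilde\mu$ and the kernel $P$, not the $Q$-functions. Hence the Dobrushin-type bound and the direct mean-field Lipschitz bound of $P$ carry over verbatim and give $K_{\mathrm D}\|\mu-\tilde\mu\|_1$. The only change is in the third term, which requires a policy-sensitivity estimate in the weighted norm. Since $\omega\le W$, we have
\[
\|Q-\tilde Q\|_\infty\le W\|Q-\tilde Q\|_\omega .
\]
The minimizers $u_Q(x)$ and $u_{\tilde Q}(x)$ are determined pointwise in $x$ by the $\rho$-strongly convex functions $Q(x,\cdot)$ and $\tilde Q(x,\cdot)$. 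Applying \cite[Lemma 3]{anahtarci2023q} to these functions therefore gives
\[
\sup_x\|u_Q(x)-u_{\tilde Q}(x)\|_1\le \frac1\rho\,\sup_{x,u}|Q(x,u)-\tilde Q(x,u)|\le \frac{W}{\rho}\|Q-\tilde Q\|_\omega .
\]
Plugging this into the third term with the Lipschitz constant $K_1$ from Lemma~\ref{lem:as} yields $\frac{K_1}{\rho}W\|Q-\tilde Q\|_\omega$, which proves \eqref{eq:quaso1}.

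\textbf{Second estimate \eqref{eq:quaso2}.} We write
\[
H_1^{\mathrm{avg}}(Q^1,\mu_1)-H_1^{\mathrm{avg}}(Q^2,\mu_2)=\Bigl[H_1^{\mathrm{avg}}(Q^1,\mu_1)-H_1^{\mathrm{avg}}(Q^1,\mu_2)\Bigr]+\Bigl[H_1^{\mathrm{avg}}(Q^1,\mu_2)-H_1^{\mathrm{avg}}(Q^2,\mu_2)\Bigr].
\]

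For the $Q$-difference (second bracket), we use $|\min_v Q^1(y,v)-\min_v Q^2(y,v)|\le \|Q^1-Q^2\|_\omega\,\omega_{\max}(y)$. Together with the drift inequality for the sub-kernel $\tilde P$, where Convention~\ref{conv} removes the $b$-term, this gives
\[
\beta\sum_y \omega_{\max}(y)\tilde P(y\mid x,u,\mu_2)\le \alpha\beta\,\omega(x,u).
\]
Dividing by $\omega(x,u)$ yields $\alpha\beta\|Q^1-Q^2\|_\omega$.

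For the $\mu$-difference (first bracket), the cost contributes at most $L_1\|\mu_1-\mu_2\|_1$. The transition part is
\[
\beta\sum_y Q^1_{\min}(y)\bigl(\tilde P(y\mid x,u,\mu_1)-\tilde P(y\mid x,u,\mu_2)\bigr),
\]
and the $\lambda$ parts cancel in this difference. Because the two kernels have equal total mass, the sum is invariant under shifting $Q^1_{\min}$ by a constant. We may therefore bound it by $\tfrac{K_1}{2}\,\|Q^1_{\min}\|_{\mathrm{Lip}}\|\mu_1-\mu_2\|_1$, where the Lipschitz constant is taken with respect to the discrete metric. Using membership of $Q^1$ in $\mathcal C_{\alpha\beta}$, this Lipschitz constant is at most $\bar L$ (with $\alpha\beta$ in place of $\beta$). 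The total coefficient is then
\[
L_1+\frac{\beta K_1}{2}\bar L\le L_1+\frac{\alpha\beta K_1}{2}\bar L\cdot\frac{1}{\alpha}\,?
\]
This is where care is needed. I would instead track the exact identity $\bar L=L_1+\tfrac{\alpha\beta K_1}{2}\bar L$. The factor $\alpha$ must come from a sharper bound on the oscillation of $Q^1_{\min}$. Alternatively, one absorbs the weight, noting $\omega\ge1$, so that the unweighted bound dominates the weighted one.

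\textbf{Main obstacle.} I expect this bookkeeping of the mean-field Lipschitz constant in the weighted setting to be the main obstacle: the required factor $\alpha\beta$, rather than $\beta$, inside $\bar L$. I would first try the pattern of \cite[Lemma 2]{anahtarci2023q} and \cite[Theorem 3]{anahtarci2020value}, where exactly this weighted constant is derived. The rest is routine.
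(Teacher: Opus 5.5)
Your part (1) and the $Q$-difference half of part (2) match the paper's proof. The paper also feeds $\|Q-\tilde Q\|_\infty\le W\|Q-\tilde Q\|_\omega$ into the proof of Lemma~\ref{lem:8}, and bounds the $Q$-term by the drift inequality, with Convention~\ref{conv} removing $b$. The gap is the $\mu$-difference in (2): your display ends in a question mark, and neither rescue you suggest can work.
\begin{itemize}
\item The drift/minorization condition only controls integrals of $\omega$ against the positive measure $\tilde P$. Here you integrate against $\tilde P(\cdot\mid x,u,\mu_1)-\tilde P(\cdot\mid x,u,\mu_2)=P(\cdot\mid x,u,\mu_1)-P(\cdot\mid x,u,\mu_2)$. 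This is a mass-zero signed measure whose only control is $K_1\|\mu_1-\mu_2\|_1$, with no factor $\alpha$.
\item The oscillation of $Q^1_{\min}$ can equal $\bar L$ for elements of $\mathcal C_{\alpha\beta}$, so no sharper oscillation bound is available.
\item $\omega\ge 1$ only turns an unweighted bound into a weighted one with the same coefficient.
\end{itemize}
The paper offers nothing to borrow at this step; its proof simply asserts
\[
\Bigl|C(x,u,\mu_1)-C(x,u,\mu_2)+\sum_{y\in X}Q^1_{\min}(y)\bigl(\tilde P(y\mid x,u,\mu_1)-\tilde P(y\mid x,u,\mu_2)\bigr)\Bigr|\le \bar L\,\|\mu_1-\mu_2\|_1\,\omega(x,u)
\]
with no argument.

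Your coefficient $L_1+\frac{\beta K_1}{2}\bar L$ is in fact sharp. Since $\bar L=L_1+\frac{\alpha\beta K_1}{2}\bar L$, it is strictly larger than $\bar L$ whenever $\alpha<1$, so (2) as stated (with $\bar L$ built from $\alpha\beta$ per Convention~\ref{conv:5}) cannot be proved. Here is an example.
\begin{itemize}
\item Take $X=\{0,1\}$, $\omega\equiv1$ and $\lambda\equiv\frac{1-\alpha}{2}$.
\item Set $p(\cdot\mid x,a,\mu)=\lambda+\alpha\mu$, so $K_1=\alpha$, $b=1-\alpha$, and Convention~\ref{conv} holds.
\item Set $c(x,a,\mu)=2L_1\mu(1)$.
\item Take $Q^1=Q^2$ generated by $\mathfrak q(0,\cdot)=0$ and $\mathfrak q(1,\cdot)=\bar L$. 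This lies in $\tilde{\mathcal C}_{\alpha\beta}$, since $\bar L\le\frac{2L_1}{1-\alpha\beta}=\frac{M}{1-\alpha\beta}$.
\end{itemize}
Then $H^{\mathrm{avg}}_1(Q^1,\mu_1)-H^{\mathrm{avg}}_1(Q^1,\mu_2)\equiv(2L_1+\alpha\beta\bar L)(\mu_1(1)-\mu_2(1))$. Its weighted norm is $(L_1+\frac{\alpha\beta}{2}\bar L)\|\mu_1-\mu_2\|_1$, which exceeds $\bar L\|\mu_1-\mu_2\|_1$ because here $\bar L=L_1+\frac{\alpha^2\beta}{2}\bar L$.

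What your argument does establish is (2) with $\bar L$ replaced by $L_1+\frac{\beta K_1}{2}\bar L$. This is at most $\frac{L_1}{1-\frac{\beta K_1}{2}}$ when $\beta K_1<2$. In other words, the drift factor improves the $Q$-coefficient but not the $\mu$-coefficient. The same estimate in the $x$-variable gives Lipschitz constant $L_1+\frac{\beta K_1}{2}\bar L$, so the asserted invariance of $\mathcal C_{\alpha\beta}$ under $H^{\mathrm{avg}}_1(\cdot,\mu)$ is affected in the same way. Any downstream use, for instance in Theorem~\ref{thrm:avg}, would have to carry the larger constant.
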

\begin{proof}
Recall that \(W := \sup_{(x,a) \in X\times A}\omega(x,a)\). Since
\[
\|Q-\tilde Q\|_\infty\leq W\|Q-\tilde Q\|_\omega,
\]
the proof of Lemma~\ref{lem:8} gives \eqref{eq:quaso1}.

For \eqref{eq:quaso2}, add and subtract
\(
\sum_{y\in X}Q^1_{\min}(y)\tilde P(y\mid x,u,\mu_2)
\)
in the difference of the two operators. Then,
\begin{align*}
&\left|C(x,u,\mu_1)-C(x,u,\mu_2)
+\sum_{y\in X}Q^1_{\min}(y)
\left(\tilde P(y\mid x,u,\mu_1)-\tilde P(y\mid x,u,\mu_2)\right)\right|
\\&\leq \bar L\|\mu_1-\mu_2\|_1\omega(x,u).
\end{align*}
Moreover, Assumption~\ref{ass:acmfg} and Convention~\ref{conv} give
\begin{align*}
\left|\beta\sum_{y\in X}
\left(Q^1_{\min}(y)-Q^2_{\min}(y)\right)
\tilde P(y\mid x,u,\mu_2)\right|
&\leq
\beta\|Q^1-Q^2\|_\omega
\sum_{y\in X}\omega_{\max}(y)\tilde P(y\mid x,u,\mu_2)
\\&\leq
\alpha\beta\|Q^1-Q^2\|_\omega\omega(x,u).
\end{align*}
Dividing by \(\omega(x,u)\) and taking the supremum proves \eqref{eq:quaso2}.
\end{proof}

We note that, for all \(\mu \in \mathcal P(X)\), \(H_1^{\mathrm{avg}}(\cdot,\mu)\) is invariant over \(\mathcal C_{\alpha\beta}\), which can be proved as in Lemma \ref{lem:4}.

\section{Existence and Uniqueness of Infinite-Horizon Time-Reversed Mean-field Equilibria under Fixed Terminal Cost}\label{sect:exist}
In this section, we provide existence and uniqueness criteria for time-reversed mean-field equilibria under a fixed terminal cost, which amounts to the proofs of Theorems \ref{thrm:a} and \ref{thrm:b}. In particular, we study the following problems:

\begin{enumerate}
    \item Given a terminal $Q$-function in \(\mathcal C_{\beta}\), we establish the existence of a TRMFE. Let $(Q_n)_n$ be a family of terminal $Q$-functions converging to some $Q_*$. We show that every accumulation point of the finite-horizon MFEs with horizon length $n$, obtained under the terminal $Q$-function \(Q_n\), belongs to \(\mathrm{TRMFE}_{Q_*}\). This amounts to the proof of Theorem \ref{thrm:a}.

    \item We prove Theorem \ref{thrm:b}, which provides a uniqueness condition for \(\mathrm{TRMFE}_{Q_*}\).
\end{enumerate}

For simplicity, we present these results for the discounted case. However, the proofs extend to the average-cost case with only minimal modifications.

\subsection{Existence of Infinite-Horizon Time-Reversed Mean-field Equilibria}\label{sect:4.1}

In the case of infinite-horizon non-stationary MFGs, the existence of an MFE is established under mild assumptions in \cite{SaBaRaSIAM}. However, for MFTVPs, we do not have access to a global structure such as an objective function. Thus, the techniques developed in \cite{SaBaRaSIAM} are not directly applicable to MFTVPs.

In this subsection, we establish the existence of a TRMFE in the infinite-horizon discounted setting under a fixed terminal $Q$-function in $\mathcal C_{\beta}$. Our approach relies on studying finite-horizon MFEs with the given terminal $Q$-function and a fixed initial state measure. We then show that, as the horizon length \(T\) tends to infinity, one obtains a TRMFE (up to the extraction of a subsequence). Since the proofs are essentially identical, we state a more general result in which the terminal $Q$-function is allowed to vary with the horizon length \(T\). This more general formulation is used later to establish the convergence of a class of finite-horizon MFEs to a stationary MFE.

The main result of this subsection is the following existence theorem, which establishes the first part of Theorem \ref{thrm:a}.

\begin{theorem}\label{thrm:1}
Suppose that Assumption \ref{ass:1} holds. Then, for any $Q_* \in \mathcal C_{\beta}$, there exists a solution to the MFTVP with terminal $Q$-function $Q_*$.
\end{theorem}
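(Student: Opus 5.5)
The plan is to obtain the TRMFE as a limit of finite-horizon MFEs with the fixed terminal $Q$-function $Q_*$, extracted by compactness and a diagonal argument. Fix $\mu_{\mathrm{initial}}\in\mathcal P(X)$.

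\emph{Step 1: finite-horizon existence.} For each $T$ I will show that $\mathrm{MFE}_{T,\mu_{\mathrm{initial}},Q_*}$ is non-empty. Consider the map $\Phi_T$ on $\prod_{t=0}^{T}\mathcal C_\beta$. Given a $Q$-flow, it first propagates the measures forward from $\mu_{-T}=\mu_{\mathrm{initial}}$ using $\mu_{-t+1}=H_2(Q_{-t},\mu_{-t})$. It then recomputes the $Q$-functions backward from $Q_0=Q_*$ via $Q_{-t-1}=H_1(Q_{-t},\mu_{-t-1})$. Lemma~\ref{lem:4} shows that $\Phi_T$ maps this product into itself. Lemma~\ref{lem:3} shows that the product is a compact, convex subset of a finite-dimensional space: $\tilde{\mathcal C}_\beta$ is defined by linear inequalities and is therefore convex. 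Lemmas~\ref{lem:7} and~\ref{lem:8} show that $\Phi_T$ is continuous; strong convexity makes $Q\mapsto u_Q$ Lipschitz. Brouwer's theorem then yields a fixed point, which is a finite-horizon MFE with terminal $Q$-function $Q_*$.

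\emph{Step 2: compactness and diagonal extraction.} Reindex so that the terminal time is $0$. For each fixed $k$, the sequence $(Q^T_{-k},\mu^T_{-k})_{T\ge k}$ lies in the compact set $\mathcal C_\beta\times\mathcal P(X)$. A diagonal argument gives $T_n\to\infty$ with $Q^{T_n}_{-k}\to Q_{-k}$ uniformly and $\mu^{T_n}_{-k}\to\mu_{-k}$ for every $k\in\mathbb N$. Note that $Q^{T}_0=Q_*$ for all $T$, so $Q_0=Q_*$.

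\emph{Step 3: passing to the limit in the relations.} By Lemma~\ref{lem:7},
\[
\|H_1(Q^{T_n}_{-k},\mu^{T_n}_{-k-1})-H_1(Q_{-k},\mu_{-k-1})\|_\infty\le K_1\bar L\|\mu^{T_n}_{-k-1}-\mu_{-k-1}\|_1+\beta\|Q^{T_n}_{-k}-Q_{-k}\|_\infty\to0.
\]
Hence $Q_{-k-1}=H_1(Q_{-k},\mu_{-k-1})$. Similarly, Lemma~\ref{lem:8} gives $\mu_{-k}=H_2(Q_{-k-1},\mu_{-k-1})$ in the limit. Set $\pi_{-k}=\pi_{Q_{-k}}$. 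This policy is deterministic and concentrates on $\mathrm{Opt}(Q_{-k}(x,\cdot))$, since each $Q_{-k}\in\mathcal C_\beta$ has a unique minimizer. Therefore $(\pi_{-k},\mu_{-k})_{k\ge0}$ is a $\mathrm{TRMFE}_{Q_*}$.

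\emph{Main obstacle.} In general, the hard part is the convergence of policies and minimizers. Here the regularizer removes it: the map $Q\mapsto u_Q$ is $1/\rho$-Lipschitz by \eqref{eq:h2-policy-sensitivity}, so policy convergence follows from uniform convergence of the $Q$-functions, and no occupancy-measure disintegration is needed. The step I would check most carefully is Step~1, namely the convexity of $\mathcal C_\beta$ and the continuity of $\Phi_T$. If convexity were problematic, I would instead work with $\tilde{\mathcal C}_\beta$ directly, using the bijection of Lemma~\ref{lem:2}, and apply Brouwer there.
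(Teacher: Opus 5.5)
Your proof is correct and shares the paper's skeleton: finite-horizon equilibria with terminal $Q_*$, compactness of $\mathcal C_\beta\times\mathcal P(X)$, diagonal extraction, and passage to the limit. It differs in two useful ways.

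First, the paper's proof simply starts from finite-horizon MFEs with $\mu^T_{-T}=\mu_*$ and $Q^T_0=Q_*$, without proving that they exist under Assumption~\ref{ass:1} alone. Your Brouwer argument on $\mathcal C_\beta^{T+1}$ supplies that step, since $\mathcal C_\beta^{T+1}$ is affinely homeomorphic to the compact convex set $\tilde{\mathcal C}_\beta^{T+1}\subset\mathbb R^{(T+1)|X||A|}$.

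Second, the paper passes to the limit using continuous convergence \cite{serfozo1982convergence} for $H_1$, and weak convergence of the occupancy measures $\pi_{Q^{T_n}_{-t}}\otimes\mu^{T_n}_{-t}$ together with lemmas from \cite{ayd} for $H_2$. You instead use the Lipschitz bounds of Lemmas~\ref{lem:7} and~\ref{lem:8}. Your route is shorter and quantitative, but it depends on the regularizer through the $1/\rho$-Lipschitz minimizer map. The paper's occupancy-measure route does not need Lipschitz policies, and this is what underlies its remark that the result extends to the weaker setting of \cite{SaBaRaSIAM}.

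One small correction: the $\mu$-coefficient in your Step~3 estimate should be $\bar L$, as in \eqref{eq:quaso2} and Convention~\ref{conv:5}, not $K_1\bar L$, because $C$ depends on $\mu$ directly. Lemma~\ref{lem:7} as stated drops this $L_1$ contribution. Only finiteness of the constant matters, so your conclusion stands.
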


Our proof of Theorem \ref{thrm:1} has two objectives. First, as stated, we establish the existence of a TRMFE under a given terminal $Q$-function in a suitable domain. Second, the proof proceeds by approximating the TRMFE via finite-horizon MFEs obtained under terminal $Q$-functions, which is essential for our stationary MFE approximation scheme. For completeness, we state this convergence result separately for later reference, as it constitutes the second part of Theorem \ref{thrm:a}.

\begin{theorem}
Let \((\pi^T_t \otimes \mu^T_t)_{t=0}^{T-1} \in \mathrm{MFE}_{T,\mu_0,Q^T_{\mathrm{terminal}}}\) for each \(T \in \mathbb N\). Suppose that \(Q^T_{\mathrm{terminal}} \to Q_{\mathrm{terminal}}\) under the uniform norm. Then, every accumulation point of the sequence \(\{(\pi^T_t\otimes \mu^T_t)_{t=0}^{T-1}\}_{T=1}^{\infty}\) in \(\mathcal P(X \times \mathcal P(A))^{\infty}\) belongs to \(\mathrm{TRMFE}_{Q_{\mathrm{terminal}}}\).
\end{theorem}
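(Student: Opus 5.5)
Work in the reversed indexing of the Convention, so the terminal stage is $0$ and the finite-horizon equilibrium occupies indices $-T,\dots,0$. For each $T$ let $(Q^T_{-t})_{t=0}^{T}$ be the associated $Q$-functions, with $Q^T_0 = Q^T_{\mathrm{terminal}}$ and $Q^T_{-t-1}=H_1(Q^T_{-t},\mu^T_{-t-1})$. Extend each sequence to all $t\in\mathbb N$ by constants: $\nu$ for the occupancy measures, and an arbitrary fixed element of $\mathcal C_\beta$ for the $Q$-functions. Fix an accumulation point $(\gamma_{-t})_{t\ge0}$ and a subsequence $T_n$ with $\pi^{T_n}_{-t}\otimes\mu^{T_n}_{-t}\to\gamma_{-t}$ weakly for every $t$.

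\textbf{Compactness.} By Lemma~\ref{lem:4}, $H_1(\cdot,\mu)$ preserves $\mathcal C_\beta$, so every $Q^T_{-t}$ lies in $\mathcal C_\beta$. This set is compact by Lemma~\ref{lem:3}, and $\mathcal P(X)$ is compact. A diagonal argument therefore refines $T_n$ so that, for every fixed $t$, $Q^{T_n}_{-t}\to Q_{-t}$ uniformly and $\mu^{T_n}_{-t}\to\mu_{-t}$. Since $\mu^T_{-t}$ is the $X$-marginal of $\pi^T_{-t}\otimes\mu^T_{-t}$, $\mu_{-t}$ is the $X$-marginal of $\gamma_{-t}$. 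The terminal condition gives $Q_0=\lim Q^{T_n}_{\mathrm{terminal}}=Q_{\mathrm{terminal}}$.

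\textbf{Backward relation.} For fixed $t$ and all large $n$ (so that $T_n>t$), we have $Q^{T_n}_{-t-1}=H_1(Q^{T_n}_{-t},\mu^{T_n}_{-t-1})$. Lemma~\ref{lem:7} gives
\[\|H_1(Q^{T_n}_{-t},\mu^{T_n}_{-t-1})-H_1(Q_{-t},\mu_{-t-1})\|_\infty\le K_1\bar L\|\mu^{T_n}_{-t-1}-\mu_{-t-1}\|_1+\beta\|Q^{T_n}_{-t}-Q_{-t}\|_\infty,\]
where the Lipschitz constant of $Q_{\min}$ is bounded uniformly on $\mathcal C_\beta$ by $\bar L$. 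Passing to the limit yields $Q_{-t-1}=H_1(Q_{-t},\mu_{-t-1})$ for all $t$.

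\textbf{Forward relation.} This is where the occupancy measures become relevant. Because of the regularizer, each $Q^T_{-t}$ has a unique minimizer $u_{Q^T_{-t}}(x)$, so $\pi^T_{-t}(\cdot\mid x)=\delta_{u_{Q^T_{-t}}(x)}$. The estimate \eqref{eq:h2-policy-sensitivity} gives $u_{Q^{T_n}_{-t}}(x)\to u_{Q_{-t}}(x)$ for each $x$. Since $X$ is finite, it follows that
\[\gamma_{-t}=\sum_x\mu_{-t}(x)\,\delta_x\otimes\delta_{u_{Q_{-t}}(x)}.\]
Its disintegration is therefore $\pi_{-t}=\pi_{Q_{-t}}$, which is supported on $\mathrm{Opt}(Q_{-t}(x,\cdot))$. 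For the state-measure evolution we have $\mu^{T_n}_{-t}=H_2(Q^{T_n}_{-t-1},\mu^{T_n}_{-t-1})$, and Lemma~\ref{lem:8} gives
\[\|H_2(Q^{T_n}_{-t-1},\mu^{T_n}_{-t-1})-H_2(Q_{-t-1},\mu_{-t-1})\|_1\le \tfrac{K_1}{\rho}\|Q^{T_n}_{-t-1}-Q_{-t-1}\|_\infty+K_{\mathrm D}\|\mu^{T_n}_{-t-1}-\mu_{-t-1}\|_1\to0.\]
Hence $\mu_{-t}=H_2(Q_{-t-1},\mu_{-t-1})$ for all $t$.

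Together these show that $(\gamma_{-t})_t$ induces an element of $\mathrm{TRMFE}_{Q_{\mathrm{terminal}}}$. The main obstacle would ordinarily be the convergence of optimal policies in the presence of non-unique minimizers, which would require \cite[Lemma 11]{ayd}-type arguments. Here the strong convexity of $\rho\Omega$ reduces it to continuity of the argmin map.

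One further point needs care: the padded entries $\nu$ must disappear for each fixed $t$. They do, because $T_n\to\infty$, so for any fixed $t$ the index $-t$ eventually lies inside the genuine horizon and the padding never affects the limit.
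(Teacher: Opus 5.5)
Your proposal is correct and follows the same route as the paper's proof of Theorem~\ref{thrm:1}: compactness of $\mathcal C_\beta\times\mathcal P(X)$, a diagonal extraction along the subsequence that realizes the accumulation point, and passage to the limit in both the backward $H_1$-recursion and the forward $H_2$-recursion. The only difference is in the tools. The paper invokes continuous convergence \cite[Theorem 3.5]{serfozo1982convergence} and \cite[Lemma 14]{ayd}, whereas you use the Lipschitz bounds of Lemmas~\ref{lem:7} and \ref{lem:8} together with the argmin estimate \eqref{eq:h2-policy-sensitivity}. This makes your argument self-contained, lets you identify $\gamma_{-t}$ explicitly, and handles the varying terminal $Q$-functions and the padding directly. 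One small correction: the $\mu$-coefficient in Lemma~\ref{lem:7} should really be $\bar L$, because $C$ also depends on $\mu$, but any finite constant suffices for your argument.
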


Before proceeding with the proof of Theorem \ref{thrm:1}, we state several technical lemmas that are essential for our purposes. To pass from finite-horizon MFE to TRMFE, we extract convergent subsequences of the corresponding $Q$-functions. First, we show that $(\mathcal C_{\beta},\|\cdot\|_{\infty})$ is a compact invariant domain for \(H_1(\cdot,\mu)\). In particular, for a given sequence of finite-horizon MFEs, we can extract a convergent subsequence of the corresponding $Q$-functions at each time index \(t\) as the horizon length \(T \to \infty\).

\begin{lemma}
For any $\mu \in \mathcal P(X)$ and $Q \in \mathcal C_{\beta}$, we have \(H_1(Q,\mu) \in \mathcal C_{\beta}\). Furthermore, the set $\mathcal C_{\beta}$ is compact under the uniform norm.
\end{lemma}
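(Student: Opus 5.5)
The lemma has two parts, invariance and compactness, and I would reduce each to results already stated in the appendix.

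\emph{Invariance.} This is essentially Lemma~\ref{lem:4}, but I would verify it directly through the representation of $\mathcal C_\beta$. Write $Q(x,u)=\sum_a \mathfrak q(x,a)u(a)+\rho\Omega(u)$ with $\mathfrak q\in\tilde{\mathcal C}_\beta$. Then $Q_{\min}(y)=\min_u Q(y,u)$ does not depend on $u$. Since $C$ and $P$ are affine in $u$, $H_1(Q,\mu)(x,u)=\sum_a \mathfrak q'(x,a)u(a)+\rho\Omega(u)$, where
\[
\mathfrak q'(x,a)=c(x,a,\mu)+\beta\sum_y Q_{\min}(y)\,p(y\mid x,a,\mu).
\]
So it suffices to show $\mathfrak q'\in\tilde{\mathcal C}_\beta$.

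For nonnegativity and the sup bound, I need $0\le Q_{\min}\le \frac{M}{1-\beta}+\rho K$. Nonnegativity is immediate, because $\mathfrak q\ge 0$ and, by the convention of Lemma~\ref{lem:2}, $\Omega\ge0$. The upper bound is the delicate point. The regularizer adds up to $\rho K$ per stage, so one either absorbs $\rho\Omega$ into $M$ (take $M$ as the sup of $C+\rho\Omega$), or uses $\min_u Q(y,u)\le Q(y,\delta_a)$ together with $\Omega(\delta_a)$ bounded. Either way $\|\mathfrak q'\|_\infty\le M+\beta\frac{M}{1-\beta}=\frac{M}{1-\beta}$, with the constant understood to include the regularizer.

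For the Lipschitz bound, fix pairs $(x,a)$ and $(\tilde x,\tilde a)$. Then
\[
|\mathfrak q'(x,a)-\mathfrak q'(\tilde x,\tilde a)|\le L_1 d+\beta\Bigl|\sum_y Q_{\min}(y)\bigl(p(y\mid x,a,\mu)-p(y\mid\tilde x,\tilde a,\mu)\bigr)\Bigr|,
\]
where $d=1_{\{x\ne\tilde x\}}+2\cdot1_{\{a\ne\tilde a\}}$. Since the two kernels have equal mass, I can subtract a constant from $Q_{\min}$. This bounds the sum by $\tfrac12\,\mathrm{osc}(Q_{\min})\,\|p-\tilde p\|_1\le \tfrac12\,\bar L\, K_1 d$, using the first appendix lemma, which gives $\mathrm{osc}(Q_{\min})\le \bar L$. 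The total is then $(L_1+\tfrac{\beta K_1}{2}\bar L)d=\bar L d$, because $\bar L=L_1/(1-\beta K_1/2)$ solves exactly this fixed-point identity. This half-oscillation step is the only nontrivial part, and it is where the factor $\tfrac12$ in $\bar L$ comes from.

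\emph{Compactness.} This is Lemma~\ref{lem:3}. Since $X\times A$ is finite, $\tilde{\mathcal C}_\beta$ is a closed and bounded subset of $\mathbb R^{X\times A}$, defined by non-strict inequalities. It is therefore compact by Heine--Borel, and Arzel\`a--Ascoli is not even needed. The map $\mathfrak q\mapsto Q$ satisfies $\|Q-Q'\|_\infty=\sup_{x,u}|\sum_a(\mathfrak q-\mathfrak q')(x,a)u(a)|=\|\mathfrak q-\mathfrak q'\|_\infty$. It is thus an isometric bijection onto $\mathcal C_\beta$, and $\mathcal C_\beta$ is compact as the continuous image of a compact set.
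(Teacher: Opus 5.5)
Your route is the one the paper relies on. Its proof only points to Lemma~\ref{lem:4} (the argument of \cite[Lemma 1]{anahtarci2023q}: reduce to the affine part $\mathfrak q'$, then use the centering estimate that gives $\bar L=L_1+\tfrac{\beta K_1}{2}\bar L$) and to Lemma~\ref{lem:3}. Your Lipschitz step is correct. Your compactness argument, Heine--Borel on the finite-dimensional set $\tilde{\mathcal C}_\beta$ plus the isometry $\|Q-Q'\|_\infty=\|\mathfrak q-\mathfrak q'\|_\infty$, is a cleaner version of the paper's appeal to Arzel\`a--Ascoli and a ``continuous bijection.''

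The gap is in the sup bound. The comparison $\min_u Q(y,u)\le Q(y,\delta_a)$ only yields $\|\mathfrak q'\|_\infty\le \frac{M}{1-\beta}+\beta\rho\min_a\Omega(\delta_a)$. This exceeds $\frac{M}{1-\beta}$ whenever $\min_a\Omega(\delta_a)>0$, as for the paper's example regularizer, where $\Omega(\delta_a)=\tfrac12$. So your ``either way'' claim is false for that route. Replacing $M$ by $\sup(C+\rho\Omega)$ instead proves invariance of a larger set, not of $\mathcal C_\beta$ with $M=\sup C$ as in the statement. Compare with a minimizer $u_\Omega$ of $\Omega$ instead:
\[
Q_{\min}(y)\le Q(y,u_\Omega)=\sum_a\mathfrak q(y,a)u_\Omega(a)+\rho\min_u\Omega(u)\le \frac{M}{1-\beta}+\rho\min_u\Omega(u).
\]
Hence $\|\mathfrak q'\|_\infty\le\frac{M}{1-\beta}+\beta\rho\min_u\Omega(u)$, and given $\Omega\ge0$ the stated constant is preserved exactly when $\min_u\Omega(u)=0$.

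That normalization is necessary, not just convenient. If $c\equiv0$, then $M=0$, $\tilde{\mathcal C}_\beta=\{0\}$ and $\mathcal C_\beta=\{\rho\Omega\}$. In that case $H_1(\rho\Omega,\mu)=\rho\Omega+\beta\rho\min_u\Omega(u)$, which lies in $\mathcal C_\beta$ only if $\min_u\Omega(u)=0$. Also, $\Omega\ge0$, which you need for nonnegativity of $\mathfrak q'$, is not a convention of Lemma~\ref{lem:2}. Like $\min_u\Omega(u)=0$, it is an unstated standing assumption of the paper, checked only in the numerical example, where both hold. State these two normalizations explicitly and bound $Q_{\min}$ through $u_\Omega$; your argument then proves the lemma with the paper's constants.
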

\begin{proof}
The fact that for $Q \in \mathcal C_{\beta}$ we have \(H_1(Q,\mu) \in \mathcal C_{\beta}\) can be proved as in Lemma \ref{lem:4}. The second statement follows from Lemma \ref{lem:3}.
\end{proof}

The evolution of the state measures also requires us to control the joint probability measures constructed from the optimal policies and the state measures, since the limiting joint probability measures must also concentrate on the optimal state-action pairs of the limiting $Q$-functions. Next, we demonstrate that, as long as the $Q$-functions converge uniformly, this property is preserved.

\begin{lemma}\label{lem:1}
Let $(Q_n)_n$ be a family of uniformly bounded continuous real-valued functions over $X\times A$ such that $\lim_{n \to \infty} Q_n(x_n,a_n) = Q(x,a)$ for all $(x_n,a_n)_n,(x,a) \in X \times A$ such that $\lim_{n \to \infty} (x_n,a_n) = (x,a)$. Then, if $(\pi_n \otimes \mu_n)_{n \in \mathbb N} \subset \mathcal P(X \times A)$ concentrates on the optimal state-action pairs of the continuous function $Q_n : X \times A \to \mathbb R$ for all $n$, then the weak limit of $(\pi_n \otimes \mu_n)_n$ in $\mathcal P(X\times A)$ also concentrates on the optimal state-action pairs of the function $Q$, provided that the limit  of $(\pi_n \otimes \mu_n)_n$ exists.
\end{lemma}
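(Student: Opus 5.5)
We will prove the lemma by a direct semicontinuity argument. We rely only on the fact that weak convergence on the finite (discrete) product \(X\times A\) gives pointwise convergence of masses. We also use the hypothesis of continuous convergence \(Q_n(x_n,a_n)\to Q(x,a)\); on a finite discrete space this reduces to pointwise convergence \(Q_n\to Q\), and since the space is finite, that convergence is uniform. Let \(\gamma_n:=\pi_n\otimes\mu_n\) and let \(\gamma\) be its weak limit. The claim is that \(\gamma\) is supported on
\[
\mathcal O:=\{(x,a): Q(x,a)=\min_{b\in A}Q(x,b)\}.
\]
We also note that the \(X\)-marginal of \(\gamma\) is the limit of the marginals \(\mu_n\), so disintegrating \(\gamma\) produces a policy defined \(\mu\)-a.e.

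The first step is to show that the set-valued map \(n\mapsto \mathcal O_n:=\mathrm{Opt}(Q_n)\) is upper semicontinuous in the limit. Fix \((x,a)\notin\mathcal O\), so that \(\delta:=Q(x,a)-\min_b Q(x,b)>0\). Uniform convergence gives \(\|Q_n-Q\|_\infty<\delta/2\) for all large \(n\). For such \(n\) and any \(b^*\in\mathrm{Opt}(Q(x,\cdot))\) we get
\[
Q_n(x,a)>Q(x,a)-\delta/2=\min_b Q(x,b)+\delta/2=Q(x,b^*)+\delta/2>Q_n(x,b^*).
\]
Hence \((x,a)\notin\mathcal O_n\) for all large \(n\). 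Because \(\gamma_n\) concentrates on \(\mathcal O_n\), this yields \(\gamma_n(\{(x,a)\})=0\) eventually.

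The second step passes to the limit. On a finite discrete space, singletons are open and closed, so weak convergence gives \(\gamma(\{(x,a)\})=\lim_n\gamma_n(\{(x,a)\})=0\). Summing over the finitely many points of \(\mathcal O^c\) gives \(\gamma(\mathcal O^c)=0\), which is the required concentration. If one wants a version for the lifted action space \(\mathcal P(A)\), which is compact rather than discrete, the same argument applies with Portmanteau in place of pointwise masses. One shows that \(\mathcal O^c\) is covered by open sets \(U\) on which \(\gamma_n(U)=0\) eventually, using joint continuity of \(Q\) and uniform convergence \(Q_n\to Q\) on the compact set; then \(\gamma(U)\le\liminf_n\gamma_n(U)=0\), and a countable (Lindelöf) cover finishes the argument.

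The main obstacle is not the limit passage but making sure that the convergence of \(Q_n\) is uniform, or at least locally uniform, near each non-optimal point. That uniformity is what gives a margin \(\delta/2\) valid on a whole neighborhood. The hypothesis of continuous convergence over converging sequences \((x_n,a_n)\to(x,a)\) supplies exactly this: a standard contradiction argument shows that continuous convergence to a continuous limit on a compact space implies uniform convergence. We would cite \cite[Theorem 3.5]{serfozo1982convergence} or prove this directly.
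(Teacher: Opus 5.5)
Your argument is correct. The paper gives no proof of its own here and simply defers to \cite[Lemma 12]{ayd}, so any comparison is with the route that reference most likely takes. The tools the paper relies on elsewhere, namely continuous convergence together with \cite[Theorem 3.5]{serfozo1982convergence} as in \cite{SaBaRaSIAM}, suggest the standard integral argument rather than yours. That argument runs as follows:
\begin{itemize}
\item set $g_n(x,a):=Q_n(x,a)-\min_{b}Q_n(x,b)\ge 0$, which converges continuously to $g:=Q-\min_b Q(\cdot,b)$;
\item note that $\int g_n\,d\gamma_n=0$, because $\gamma_n$ concentrates on $\{g_n=0\}$;
\item pass to the limit, using uniform boundedness, to get $\int g\,d\gamma=0$;
\item conclude $\gamma(\{g>0\})=0$ from $g\ge 0$.
\end{itemize}

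The two routes trade off as follows.
\begin{itemize}
\item The integral route needs neither a uniform margin $\delta/2$ nor a Lindel\"of cover. It transfers verbatim to the lifted space $X\times\mathcal P(A)$, where the paper's $Q$-functions actually live, and to general Polish spaces, using only boundedness and continuous convergence.
\item Your route proves a slightly stronger fact: every pair that is non-optimal for $Q$ is eventually non-optimal for $Q_n$, i.e.\ the argmin sets are upper semicontinuous. After that, the finite case is pure bookkeeping. Its only cost is needing (locally) uniform convergence in the non-discrete extension, which you correctly supply.
\end{itemize}

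One small correction. As the paper itself uses it, \cite[Theorem 3.5]{serfozo1982convergence} concerns convergence of integrals $\int f_n\,d\mu_n\to\int f\,d\mu$. It does not state that continuous convergence on a compact space implies uniform convergence. For that step, give the direct contradiction argument instead of this citation.
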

\begin{proof}
This result is proved in \cite[Lemma 12]{ayd}.
\end{proof}

\begin{proof}[Proof of Theorem \ref{thrm:1}]
    Let $(Q^T_{-T},\cdots,Q^T_{-1},Q^T_0)$ be $Q$-functions and $(\mu^T_{-T}\cdots,\mu^T_{-1},\mu^T_0)$ be state-measures such that $(\pi_{Q^T_{-t}},\mu^T_{-t})_{t=0}^T$ induces a finite-horizon TRMFE with $\mu^T_{-T}=\mu_*$ and $Q^T_0=Q_*$ for all  for all $T \in \mathbb N$. Since $Q^T_{-t} \in \mathcal C_{\beta}$ for $t \in \{0,1,\cdots,T\}$, and $(\mathcal C_{\beta},\| \cdot\|_{\infty})$, and $(\mathcal P(X),\|\cdot\|_1)$ are compact spaces, for any given $t\in \mathbb N$, using a diagonalization argument, we can extract a sequence of horizon lengths $(T_n)_n$ such that $\lim_{n \to \infty} T_n = \infty$, $\lim_{n\to \infty} Q^{T_n}_{-t}=:Q_{-t}$ in $(\mathcal C_{\beta},\|\cdot\|_\infty)$, and $\lim_{n \to \infty} \mu^{T_n}_{-t}=:\mu_{-t}$ exists in $(\mathcal P(X),\|\cdot\|_1)$.

    First, for all $0 \le t \le T_n$, recall the recursion
    \[
    Q^{T_n}_{-t}(x,u) = C(x,u,\mu^{T_n}_{-t})+\rho\Omega(u)+\beta\sum_{y \in X}\min_{b \in \mathcal P(A)}Q^{T_n}_{-t+1}(y,b)\,P(y \mid x,u,\mu^{T_n}_{-t}).
    \]
    Since $\lim_{n\to \infty} Q^{T_n}_{-t}=:Q_{-t}$ under the uniform norm, and that $P(\cdot \mid \cdot,\cdot,\mu^{T_n}_{-t})$ converges continuously to $P(\cdot \mid \cdot,\cdot,\mu_{-t})$ in $(\mathcal P(X),\| \cdot \|_1)$, as a consequence of \cite[Theorem 3.5]{serfozo1982convergence}, it holds that
    \[
    \lim_{n \to \infty} \sum_{y \in X}\min_{b \in \mathcal P(A)}Q^{T_n}_{-t+1}(y,b)\,P(y \mid x,u,\mu^{T_n}_{-t}) = \sum_{y \in X}\min_{b \in \mathcal P(A)}Q_{-t+1}(y,b)\,P(y \mid x,u,\mu_{-t}).
    \]
    Thus, for all $t \in \mathbb N$, we have
    \[
    Q_{-t}(x,u) = C(x,u,\mu_{-t})+\rho\Omega(u)+\beta\sum_{y \in X}\min_{b \in \mathcal P(A)}Q_{-t+1}(y,b)\,P(y \mid x,u,\mu_{-t}).
    \]

    Furthermore, $\lim_{n\to \infty} Q^{T_n}_{-t}=:Q_{-t}$ under the uniform norm implies that
    \[
    \lim_{n \to \infty} \pi_{Q^{T_n}_{-t}}(\cdot\mid \cdot)\mu^{T_n}_{-t}(\cdot) = \pi_{Q_{-t}}(\cdot\mid \cdot)\mu_{-t}(\cdot) 
    \]
    on $\mathcal P(X\times\mathcal P(A))$ under the weak convergence (for instance, see the proof of \cite[Theorem 8]{ayd}). Thus, by \cite[Lemma 14]{ayd}, using the recursion
    \[
    \mu^{T_n}_{-t}(\cdot)=\sum_{x \in X}\int_{u \in \mathcal P(A)} P(\cdot \mid x,u,\mu^{T_n}_{-t-1})\,\pi^{T_n}_{-t-1}(u \mid x)\,\mu^{T_n}_{-t-1}(x),
    \]
    we obtain that 
    \[
    \mu_{-t}(\cdot)=\sum_{x \in X}\int_{u \in \mathcal P(A)} P(\cdot \mid x,u,\mu_{-t-1})\,\pi_{-t-1}(u \mid x)\,\mu_{-t-1}(x).
    \]
    By definition, $(\pi_{-t},\mu_{-t})_{t=0}^{\infty}$ is a TRMFE.
\end{proof}

\subsection{Uniqueness of Infinite-Horizon Time-Reversed Mean-field Equilibrium under Contraction}\label{sect:4.3}

In this subsection, we provide a uniqueness condition for time-reversed MFE under any terminal \(Q\)-function \(Q_{\mathrm{terminal}} \in \mathcal C_{\beta}\) via a contraction argument. We do not use this uniqueness result directly in our approximation scheme because the required contraction condition lies in the opposite regime from the contraction condition required for the one-step invariance iterations considered at the finite-horizon level. Nevertheless, the results of this subsection may be of independent interest and are also used in the proof of the terminal \(Q\)-function invariance condition stated in Theorem \ref{thrm:3}.

The uniqueness of an infinite-horizon MFTVP relies on studying the tail behavior of a finite-horizon MFE that starts from a fixed initial state measure and terminates with a prescribed terminal cost. To establish uniqueness via finite-time error bounds, we require two ingredients. The first is a contraction result for the finite-horizon problem under an arbitrary terminal cost and initial state measure, obtained through iterations of the \(Q\)-functions. This contraction result amounts to studying the spectral radius of a nonnegative irreducible matrix constructed from the Lipschitz parameters of \(H_1\) and \(H_2\). The second ingredient is an asymptotic characterization of the Perron right eigenvectors of this matrix.

Our contraction result relies on a \(Q\)-function-based variation of the iteration method for state measures developed in \cite[Theorem 2]{ayd}. The main difference here is that we incorporate terminal \(Q\)-functions into the analysis and employ a slightly different argument based on the Collatz--Wielandt bounds, rather than studying the roots of the characteristic polynomial of the contraction matrix. This approach bypasses several auxiliary results that rely on complex analysis; however, the techniques developed in \cite{ayd} remain applicable in our setting. Furthermore, since our goal is to analyze TRMFE, the orientation of the iterations is reversed: specifically, we consider iterations of the adjoint system. Nevertheless, the same asymptotic convergence properties established in \cite[Theorem 3]{ayd} continue to hold in our setting.

\begin{figure}[H]
\begin{tikzcd}
Q_{-T} \arrow[dr] \arrow[dr] & Q_{-T+1} \arrow[dr] \arrow[ddl, bend right=25] &Q_{-T+2} \arrow[dr] \arrow[ddl, bend right=25] & \cdots \arrow[dr] \arrow[ddl, bend right=25] & Q_{-1} \arrow[ddl, bend right=25] & \underbrace{Q_{\mathrm{terminal}}}_{\mathrm{fixed}} \arrow[ddl, bend right=25] &\mathrm{Input} \\
\mu_{-T}=\underbrace{\mu_{\mathrm{initial}}}_{\mathrm{fixed}} \arrow[r] \arrow[d] & \mu^{\pmb Q}_{-T+1} \arrow[r]\arrow[d] & \mu^{\pmb Q}_{-T+2} \arrow[r] \arrow[d] & \cdots \arrow[r] \arrow[d] & \mu^{\pmb Q}_{-1} \arrow[d] &  &\\
Q^{\mathrm{new}}_{-T} & Q^{\mathrm{new}}_{-T+1} & Q^{\mathrm{new}}_{-T+2} & \cdots & Q^{\mathrm{new}}_{-1} &   &\mathrm{Output} \arrow[uu, bend right=35]
\end{tikzcd}
\caption{Finite-horizon MFG iterative scheme under a fixed terminal $Q$-function \(Q_{\mathrm{terminal}}\) and an initial state measure \(\mu_{\mathrm{initial}}\).}
\label{fig:0}
\end{figure}

Below, we present an asymptotic characterization of the contraction properties of the iteration depicted in Figure \ref{fig:0}. Although an equivalent contraction condition was established in \cite[Theorem 3]{ayd}, we provide an alternative characterization based on more elementary tools. Furthermore, this variational formulation yields a simpler proof of the convergence rate between finite-horizon and infinite-horizon MFEs, which is the key ingredient in establishing uniqueness of the infinite-horizon TRMFE under any given terminal $Q$-function.

\begin{lemma}\label{lem:33333}
Suppose that Assumption \ref{ass:1} holds. Let \( \pmb Q = (Q_{-t})_{t=0}^T, \pmb {\hat Q}=(\hat Q_{-t})_{t=0}^T \subset \mathcal C_{\beta}\) and \(\mu_0 \in \mathcal P(X)\) be given. Define \(
\mu^{\pmb Q}_{-t+1} = H_2(Q_{-t},\mu^{\pmb Q}_{-t}),
\)
and 
\(
\mu^{\pmb {\hat Q}}_{-t+1} = H_2(\hat Q_{-t}, \mu_{-t}).
\)
We iteratively define new \(Q\)-functions as
\(
Q^{\pmb {\mu^Q}}_{-t} = H_1(Q^{\pmb {\mu^Q}}_{-t+1},\mu^{\pmb {Q}}_{-t})
\)
and
\(
Q^{\pmb {\mu^{\hat Q}}}_{-t} = H_1(Q^{\pmb {\mu^{\hat Q}}}_{-t+1},\mu^{\pmb {\hat Q}}_{-t}),
\)
where \(Q^{\pmb {\mu^Q}}_0 = Q^{\pmb {\mu^{\hat Q}}}_0 =Q_0\). Then, it holds that

\begin{equation}\label{eq:33333}
\begin{bmatrix}
    \| Q^{\pmb \mu^{\pmb Q}}_{-1} - Q^{\pmb \mu^{\pmb {\hat Q}}}_{-1} \|_{\infty} \\
    \| Q^{\pmb \mu^{\pmb Q}}_{-2} - Q^{\pmb \mu^{\pmb {\hat Q}}}_{-2} \|_{\infty} \\
    \vdots \\
    \| Q^{\pmb \mu^{\pmb Q}}_{-T+1} - Q^{\pmb \mu^{\pmb {\hat Q}}}_{-T+1} \|_{\infty} \\
    \| Q^{\pmb \mu^{\pmb Q}}_{-T} - Q^{\pmb \mu^{\pmb {\hat Q}}}_{-T} \|_{\infty}
\end{bmatrix}
\leq
\underbrace{\begin{bmatrix}
     0 & \frac{\bar L K_1}{\rho} & \cdots & \frac{\bar L K_{\mathrm D}^{T-3}K_1}{\rho} & \frac{\bar L K_{\mathrm D}^{T-2}K_1}{\rho} \\
     \beta & 0 & \cdots & \frac{\bar L K_{\mathrm D}^{T-4}K_1}{\rho} & \frac{\bar L K_{\mathrm D}^{T-3}K_1}{\rho} \\
     \vdots & \vdots & \ddots & \vdots & \vdots \\
     0 & 0 & \cdots & 0 & \frac{\bar L K_1}{\rho} \\
     0 & 0 & \cdots & \beta & 0
\end{bmatrix}}_{\mathcal {\tilde B}_{T-1}}
\begin{bmatrix}
    \|Q_{-1} - \hat Q_{-1}\|_{\infty} \\
    \|Q_{-2} - \hat Q_{-2}\|_{\infty} \\
    \vdots \\
    \|Q_{-T+1} - \hat Q_{-T+1}\|_{\infty} \\
    \|Q_{-T} - \hat Q_{-T}\|_{\infty}
\end{bmatrix},
\end{equation}
where \(\bar L\) and \(K_{\mathrm D}\) are defined as in Convention \ref{conv:5}.
\end{lemma}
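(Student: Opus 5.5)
The plan is a two-layer Lipschitz bookkeeping: first bound the state-measure discrepancies generated by the forward pass of Figure~\ref{fig:0} in terms of the input \(Q\)-discrepancies, then feed those bounds into the backward \(H_1\)-recursion. I read the iteration as depicted in Figure~\ref{fig:0}. The new \(Q\)-function at time \(-t\) is obtained by applying \(H_1\) to the \emph{input} function \(Q_{-t+1}\) (resp.\ \(\hat Q_{-t+1}\)) together with the measure \(\mu^{\pmb Q}_{-t}\) (resp.\ \(\mu^{\pmb{\hat Q}}_{-t}\)). This Jacobi-type reading is what produces the \(\beta\) entries on the subdiagonal of \(\tilde{\mathcal B}_{T-1}\) and the empty \(\beta\)-slot in the first row, since \(Q_0=\hat Q_0=Q_{\mathrm{terminal}}\). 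Both measure flows start from the same fixed initial measure at time \(-T\), so \(\Delta\mu_{-T}:=\|\mu^{\pmb Q}_{-T}-\mu^{\pmb{\hat Q}}_{-T}\|_1=0\).

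\textbf{Step 1 (forward pass).} Write \(\Delta Q_{-k}:=\|Q_{-k}-\hat Q_{-k}\|_\infty\). By Lemma~\ref{lem:8},
\[
\Delta\mu_{-t+1}\le \tfrac{K_1}{\rho}\,\Delta Q_{-t}+K_{\mathrm D}\,\Delta\mu_{-t}.
\]
Unrolling from \(\Delta\mu_{-T}=0\) by a short induction on \(t\) gives
\[
\Delta\mu_{-t}\le \sum_{k=t+1}^{T} K_{\mathrm D}^{\,k-t-1}\,\tfrac{K_1}{\rho}\,\Delta Q_{-k}.
\]
Thus the measure at time \(-t\) depends only on input \(Q\)'s strictly earlier in time (indices \(k>t\)). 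This accounts for the zero diagonal and the zero lower-left part of \(\tilde{\mathcal B}_{T-1}\) away from the subdiagonal.

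\textbf{Step 2 (backward pass).} Apply Lemma~\ref{lem:7} to \(H_1(Q_{-t+1},\mu^{\pmb Q}_{-t})-H_1(\hat Q_{-t+1},\mu^{\pmb{\hat Q}}_{-t})\). Here the coefficient \(K_1\|Q_{\min}\|_{\mathrm{Lip}}\), combined with the measure-Lipschitz constant of \(C\), is taken to be the state-measure Lipschitz coefficient \(\bar L\) of Convention~\ref{conv:5}; this is legitimate because \(Q_{-t+1}\in\mathcal C_\beta\) and \(\mathcal C_\beta\) is invariant by Lemma~\ref{lem:4}. The result is
\[
\|Q^{\mathrm{new}}_{-t}-\hat Q^{\mathrm{new}}_{-t}\|_\infty\le \bar L\,\Delta\mu_{-t}+\beta\,\Delta Q_{-t+1},
\]
with the \(\beta\)-term absent for \(t=1\). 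Substituting the bound from Step 1 yields row \(t\) of the matrix: entry \(\beta\) in column \(t-1\), and entry \(\bar L K_{\mathrm D}^{k-t-1}K_1/\rho\) in column \(k>t\). A check on the first row (\(t=1,k=2\)) gives \(\bar LK_1/\rho\), and on the last row (\(t=T\)) only the \(\beta\)-entry survives, both matching \eqref{eq:33333}.

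\textbf{Main obstacle.} I expect the main difficulty to be bookkeeping rather than analysis. One must fix the exact indexing of which input \(Q\) feeds which \(H_1\) and which \(H_2\) step, so that the powers \(K_{\mathrm D}^{k-t-1}\) and the position of \(\beta\) align. One must also justify that the Lipschitz coefficient of \(H_1\) in \(\mu\) is uniformly \(\bar L\) over \(\mathcal C_\beta\), which rests on the Lipschitz bound in the definition of \(\mathcal C_\beta\) together with Lemma~\ref{lem:as}.
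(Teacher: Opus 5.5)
Your proposal is correct and follows the paper's argument: the paper's proof is the one-line appeal to Lemma~\ref{lem:8} for the forward measure pass and Lemma~\ref{lem:7} for the backward $H_1$ step, in the Jacobi form of Figure~\ref{fig:0} where the input $Q_{-t+1}$ feeds $Q^{\mathrm{new}}_{-t}$, which is exactly your reading. The step you leave open is short: for $Q\in\mathcal C_\beta$ one has $\|Q_{\min}\|_{\mathrm{Lip}}\le\bar L$, and bounding $\sum_y Q_{\min}(y)\,(P(y\mid x,u,\mu_1)-P(y\mid x,u,\mu_2))$ by half the oscillation of $Q_{\min}$ times the $\ell_1$ distance gives the $\mu$-coefficient $L_1+\tfrac{\beta K_1}{2}\bar L=\bar L$ (invariance of $\mathcal C_\beta$ is not needed here, since the inputs already lie in $\mathcal C_\beta$).
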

\begin{proof}
    The result directly follows from Lemmas \ref{lem:7} and \ref{lem:8}.
\end{proof}

\begin{lemma}\label{lem:aa}
    For all \(T\in \mathbb N\), the matrix \(\mathcal {\tilde B}_{T-1}\) is nonnegative and irreducible. In particular, it admits a positive nonnegative eigenvector that corresponds to its spectral radius \(\rho(\mathcal {\tilde B}_{T-1})\) by the Perron-Frobenius theorem. If \(\rho(\mathcal {\tilde B}_{T-1})<1\), then the iterations $$ \mathcal C_{\beta}^{T}\ni\pmb Q \to Q^{\pmb {\mu^Q}}\in \mathcal C_{\beta}^{T}$$ defined in Lemma \ref{lem:33333} converges to a fixed point under any norm over \( \mathbb R^{|X| \times |A| \times T}\).
\end{lemma}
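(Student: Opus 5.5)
The plan has three steps: verify the two structural claims about $\tilde{\mathcal B}_{T-1}$ directly from its sparsity pattern, then obtain convergence from Lemma \ref{lem:33333} via a Perron-weighted norm, and finally use compactness of $\mathcal C_\beta$ for completeness.

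\emph{Nonnegativity.} Every entry of $\tilde{\mathcal B}_{T-1}$ is a product of the nonnegative constants $\beta,\bar L,K_1,K_{\mathrm D},\rho^{-1}$, so the matrix is nonnegative.

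\emph{Irreducibility.} We check that the directed graph of the matrix is strongly connected.
\begin{itemize}
\item The subdiagonal entries $(i+1,i)$ equal $\beta>0$, giving edges $i+1\to i$, so every index reaches every smaller index.
\item The superdiagonal entries $(i,i+1)$ equal $\bar L K_1/\rho$. Provided $L_1,K_1>0$, these give edges $i\to i+1$, so every index reaches every larger index.
\end{itemize}
Together these make the graph strongly connected, hence the matrix is irreducible. In the degenerate case $\bar LK_1=0$ the matrix is lower triangular with zero diagonal, hence nilpotent, and the conclusion below is immediate. The Perron--Frobenius theorem for irreducible nonnegative matrices then yields an eigenvector $v$ with strictly positive entries associated with $\rho(\tilde{\mathcal B}_{T-1})$.

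\emph{Convergence of the iteration.} Write $\Phi(\pmb Q)=Q^{\pmb\mu^{\pmb Q}}$ for the map defined in Lemma \ref{lem:33333}. Throughout, the $0$-th component is pinned to $Q_0$, so only the components $Q_{-1},\dots,Q_{-T}$ vary.
\begin{enumerate}
\item By Lemma \ref{lem:4}, $\Phi$ maps $\mathcal C_\beta^T$ into itself.
\item Define the weighted norm $\|\pmb Q\|_v:=\max_{1\le t\le T}\|Q_{-t}\|_\infty/v_t$ on $\mathbb R^{|X|\times|A|\times T}$.
\item Set $d=(\|Q_{-t}-\hat Q_{-t}\|_\infty)_t$. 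Then $d\le \|\pmb Q-\hat{\pmb Q}\|_v\, v$ entrywise. The matrix is nonnegative, so it preserves entrywise inequalities, and \eqref{eq:33333} gives
\[
\bigl(\|\Phi(\pmb Q)_{-t}-\Phi(\hat{\pmb Q})_{-t}\|_\infty\bigr)_t\le \tilde{\mathcal B}_{T-1}d\le \|\pmb Q-\hat{\pmb Q}\|_v\,\rho(\tilde{\mathcal B}_{T-1})\,v .
\]
\item Dividing componentwise by $v_t$ gives $\|\Phi(\pmb Q)-\Phi(\hat{\pmb Q})\|_v\le\rho(\tilde{\mathcal B}_{T-1})\|\pmb Q-\hat{\pmb Q}\|_v$. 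This is the Collatz--Wielandt mechanism.
\end{enumerate}
So if $\rho(\tilde{\mathcal B}_{T-1})<1$, then $\Phi$ is a contraction in $\|\cdot\|_v$.

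\emph{Completeness and choice of norm.} By Lemma \ref{lem:3}, $\mathcal C_\beta^T$ is compact in $\|\cdot\|_\infty$. It is therefore closed, hence complete under $\|\cdot\|_v$, which is equivalent to the product sup norm. The Banach fixed-point theorem then gives a unique fixed point and geometric convergence of the iterates in $\|\cdot\|_v$. All norms on the finite-dimensional space $\mathbb R^{|X|\times|A|\times T}$ are equivalent, so the iterates converge in any norm.

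\emph{Main obstacle.} The only delicate point is keeping the iterates inside the finite-dimensional space. Each $Q\in\mathcal C_\beta$ is a function on $X\times\mathcal P(A)$, but it is determined by its affine coefficients $\mathfrak q\in\tilde{\mathcal C}_\beta$ together with the fixed term $\rho\Omega$. The sup-norm distance on $X\times\mathcal P(A)$ equals the coefficient sup-norm distance, since the $\rho\Omega$ terms cancel and an affine function of $u$ attains its extremes at vertices. Identifying $\mathcal C_\beta$ with $\tilde{\mathcal C}_\beta\subset\mathbb R^{|X|\times|A|}$ in this way justifies the statement ``under any norm over $\mathbb R^{|X|\times|A|\times T}$.''
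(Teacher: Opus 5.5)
Your proof is correct and follows essentially the same route as the paper: both turn the entrywise bound of Lemma~\ref{lem:33333} into a contraction with factor $\rho(\tilde{\mathcal B}_{T-1})$ in a norm weighted by a Perron vector, and then pass to an arbitrary norm by equivalence of norms in finite dimensions. The only difference is the choice of weighting. The paper uses the Perron vector of the transpose ($\tilde{\mathcal B}_{T-1}^{\top}v_T=\rho(\tilde{\mathcal B}_{T-1})v_T$) with the weighted sum $\sum_t v_{T,t}\|Q_{-t}-\hat Q_{-t}\|_\infty$, while you use the right Perron vector with a weighted max-norm. Your explicit irreducibility check (with the caveat $L_1K_1>0$) and your completeness step via Lemma~\ref{lem:3} and the Banach fixed-point theorem supply details the paper leaves implicit.
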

\begin{proof}
    Define 
    $$ \mathcal C_{\beta}^{T}\times \mathcal C_{\beta}^{T}\ni (\pmb Q, \pmb{\hat Q}) \mapsto \left ( \|Q^{\pmb {\mu^Q}}_{-t}-Q^{\pmb {\mu^{\hat Q}}}_{-t}\|_{\infty}\right)_t =: T(\pmb Q,\pmb{\hat Q}).$$ 
    Let \(v_T\) be a positive eigenvector such that \(\mathcal{\tilde B}^{\top}_{T-1} v _T = \rho(\mathcal {\tilde B}_{T-1})v_T.\) Then, under the Euclidean inner product, it holds that 
    \[
    \langle T(\pmb Q,\pmb{\hat Q}),v_T \rangle \le \langle \mathcal{\tilde B}_{T-1}(\|Q_{-t}-\hat Q_{-t}\|_{\infty})_t,v_T\rangle = \rho(\mathcal {\tilde B}_{T-1})\langle (\|Q_{-t}-\hat Q_{-t}\|_{\infty})_t,v_T\rangle.
    \]
    Note that \( \left| \langle (\|Q_{-t}-\hat Q_{-t}\|_{\infty})_t, v_T \rangle \right|\) induces a norm over $\mathcal C^T_{\beta}$. Furthermore, under this norm, the map \(\pmb Q \to Q^{\pmb {\mu^Q}}\) is contractive. Since all the norms on \( \mathbb R^{|X| \times |A| \times T}\) are equivalent, \(T\) is eventually contractive under any norm on \( \mathbb R^{|X| \times |A| \times T}\).
\end{proof}

In the next lemma, we calculate the asymptote of \(\rho(\mathcal {\tilde B}_{T-1})\) as \(T\to \infty\).

\begin{lemma}\label{lem:BT_variational}
For $n\ge 1$, recursively set
\(
M_n:=\widetilde{\mathcal B}_{n-1}^{\top},
\)
Further set
\( u_1:=0,\)
and for \(k \ge 2\) set
\(
u_k:=\frac{\bar LK_1}{\rho} K_{\mathrm D}^{k-2}.
\)
Equivalently,
\[
\sum_{k\ge 1}u_k r^{k-1}
=
\sum_{k\ge 2}\frac{\bar LK_1}{\rho} K_{\mathrm D}^{k-2}r^{k-1}
=
\frac{\bar LK_1r}{\rho(1-K_{\mathrm D}r)}.
\]
For $r\in(0,K_{\mathrm D}^{-1})$ (with the convention $K_{\mathrm D}^{-1}=+\infty$ if $K_{\mathrm D}=0$), define
\[
F(r):=\frac{\beta}{r}+\sum_{k\ge 1}u_k r^{k-1}
=
\frac{\beta}{r}+\frac{\bar LK_1r}{\rho(1-K_{\mathrm D}r)}.
\]
Then limit
\(
\rho(\widetilde{\mathcal B}_{\infty})
:= \lim_{T\to\infty}\rho(\widetilde{\mathcal B}_T)
\)
exists and is finite, and moreover
\( \rho(\widetilde{\mathcal B}_{\infty}) = \min_{0<r<K_{\mathrm D}^{-1}}F(r). \)
\end{lemma}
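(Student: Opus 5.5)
The plan is to view $M_n=\widetilde{\mathcal B}_{n-1}^{\top}$ as a finite Toeplitz (lower Hessenberg) section. Its superdiagonal is $\beta$ and its strictly lower part is $u_k$ at lag $k$, with $u_1=0$ on the diagonal. We then sandwich $\rho(M_n)$ between Collatz--Wielandt bounds built from geometric test vectors. Set $a:=\bar LK_1/\rho$. Since $\widetilde{\mathcal B}_{n-1}$ is nonnegative and irreducible (Lemma~\ref{lem:aa}), and transposition preserves the spectrum, the Collatz--Wielandt formula gives
\[
\min_i \frac{(M_n v)_i}{v_i}\le\rho(M_n)\le \max_i\frac{(M_n v)_i}{v_i}
\]
for every positive $v$. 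Monotonicity in $n$ comes first. $M_n$ is a principal submatrix of $M_{n+1}$, because the truncated Toeplitz structure is nested. Hence $\rho(M_n)\le\rho(M_{n+1})$ by monotonicity of the Perron root under principal submatrices of nonnegative matrices. So the limit exists in $[0,\infty]$, and it remains to identify it.

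\textbf{Upper bound.} Fix $r\in(0,K_{\mathrm D}^{-1})$ and take $v_i=r^{i}$. Row $i$ of $M_n v$ equals
\[
\beta r^{i+1}+\sum_{k\ge2,\,k\le i}u_k r^{i-k+1}.
\]
Dividing by $r^i$ gives $\beta r+\sum_{k\le i} u_k r^{1-k}$. To match $F$, I would instead choose $v_i=r^{-i}$ or $v_i=r^{i}$ according to the orientation of the transpose. The correct choice makes the ratio
\[
\frac{\beta}{r}+\sum_{k\le i}u_k r^{k-1}\le F(r),
\]
with the tail truncated and all terms nonnegative. Thus $\rho(M_n)\le F(r)$ for all $n$ and all admissible $r$, so $\rho(\widetilde{\mathcal B}_\infty)\le\inf F$. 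The infimum is attained because $F\to\infty$ at both ends of the interval: at $0$ when $\beta>0$, and at $K_{\mathrm D}^{-1}$ when $a>0$. This gives finiteness. The degenerate case $a=0$ can be handled separately: there the matrix is nilpotent and the spectral radius is $0$. I expect the statement presumes $a>0$, or that it must be read with $\inf$.

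\textbf{Lower bound; this is the main obstacle.} A pure geometric vector fails, because the first rows and the last row miss terms. The last row misses $\beta r^{-1}$, and the early rows miss the tail of the series. I would take $r^*$ to be the minimizer, so $F'(r^*)=0$, and use a tapered vector $v_i=r^{*\,i}\sin(\pi i/(n+1))$ or $v_i=r^{*\,i}\,\min(i,n+1-i)$. The generating-function identity $F(r)=\beta/r+\sum u_k r^{k-1}$ plays the role of a symbol. On the interior rows, a second-order expansion around the critical point, where the first-order term vanishes by $F'(r^*)=0$, should give ratios at least $F(r^*)-O(1/n)-\epsilon_n$. Here $\epsilon_n$ accounts for the truncated geometric tail $\sum_{k>i}u_k r^{*\,k-1}$, which is small once $i$ is large because $K_{\mathrm D}r^*<1$. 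The boundary rows are the delicate part. The sine taper makes $v_i$ small there, but the Collatz--Wielandt minimum ranges over all rows. So I would bound $\rho(M_n)\ge \langle M_n v,w\rangle/\langle v,w\rangle$ using a left test vector $w$ instead. The cleanest version is a Rayleigh-type quotient for the similarity-symmetrized matrix $D^{-1}M_nD$ with $D=\mathrm{diag}(r^{*\,i})$, whose entries are $\beta/r^*$ and $u_kr^{*\,k-1}$. Taking $w=v$ as a sine vector then yields $\rho(M_n)\ge F(r^*)-o(1)$.

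Alternatively, a fallback avoids this. Embed $D^{-1}M_nD$ into a semi-infinite Toeplitz operator whose positive symbol $F(r^*e^{i\theta})$, evaluated at $\theta=0$, equals $\|\cdot\|_1$-type row sums. Then invoke the classical fact that spectral radii of finite sections of a nonnegative Toeplitz matrix converge to $\min_r F(r)$. Either way, combining the upper and lower bounds gives $\rho(\widetilde{\mathcal B}_\infty)=\min_{0<r<K_{\mathrm D}^{-1}}F(r)$.
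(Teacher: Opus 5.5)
Your monotonicity step and your Collatz--Wielandt upper bound coincide with the paper's. For the upper bound, commit to $v_i=r^{-i}$: row $i$ of $M_n$ is $\beta v_{i+1}+\sum_{k\le i}u_kv_{i-k+1}$, so this choice gives exactly $\beta/r+\sum_{k\le i}u_kr^{k-1}\le F(r)$. The gap is in the lower bound. The step you settle on, $\rho(M_n)\ge\langle M_nv,w\rangle/\langle v,w\rangle$ via a Rayleigh-type quotient of $D^{-1}M_nD$, is not a valid inequality. Once $K_{\mathrm D}>0$, no diagonal similarity symmetrizes $M_n$, since it has a single superdiagonal but nonzero entries $u_k$ on every subdiagonal. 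For a non-symmetric nonnegative matrix, such quotients do not bound the spectral radius from below: $N=\bigl(\begin{smallmatrix}0&1\\0&0\end{smallmatrix}\bigr)$ with $v=w=(1,1)^\top$ gives quotient $1/2$, while $\rho(N)=0$. The Toeplitz fallback is an unchecked black box. Schmidt--Spitzer theory concerns banded symbols, whereas here the symbol $\beta z^{-1}+\bar LK_1z/(\rho(1-K_{\mathrm D}z))$ has infinitely many nonzero coefficients, so you would have to identify and verify a precise theorem for this class.

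The idea you set aside already works. Use subinvariance ($M_nv\ge\lambda v$ with $v>0$ implies $\rho(M_n)\ge\lambda$) with $v_j=(r^*)^{-j}\sin(j\theta)$ and $\theta=\pi/(n+1)$; this handles every row, boundary rows included.
\begin{itemize}
\item Set $c_{-1}=\beta/r^*$ and $c_m=u_{m+1}(r^*)^m$. The ratio in row $j$ is $[c_{-1}\sin((j+1)\theta)+\sum_{m=0}^{j-1}c_m\sin((j-m)\theta)]/\sin(j\theta)$. This also holds for $j=n$, because $\sin((n+1)\theta)=0$.
\item Extending the sum to all $m\ge0$ only adds terms that are nonpositive (for $-n\le j-m\le0$) or of total size $O(\gamma^n)$, where $\gamma=K_{\mathrm D}r^*<1$.
\item By angle addition, the extended numerator equals $R\sin(j\theta)+J\cos(j\theta)$, with $R=c_{-1}\cos\theta+\sum_mc_m\cos(m\theta)\ge F(r^*)-O(\theta^2)$ and $J=c_{-1}\sin\theta-\sum_mc_m\sin(m\theta)$.
\item The condition $F'(r^*)=0$ reads $c_{-1}=\sum_m mc_m$, which gives $|J|=O(\theta^3)$.
\item Since $\sin(j\theta)\ge2\theta/\pi$ for $1\le j\le n$, every row ratio is at least $F(r^*)-O(n^{-2})-O(n\gamma^n)$.
\end{itemize}
This gives the lower bound, with a rate.

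The paper avoids test vectors for the lower bound altogether. It bounds the consecutive ratios $v^{(n)}_i/v^{(n)}_{i+1}$ of the Perron vectors uniformly in $n$ from the eigen-equations, extracts a diagonal subsequence, and passes to the limit row by row. It then evaluates the limiting equations at indices where the ratios approach $\mathfrak q_{\min}:=\liminf_i\mathfrak q_i$, obtaining $\rho(\widetilde{\mathcal B}_\infty)\ge F(\mathfrak q_{\min})\ge\min F$. The repaired test-vector argument is quantitative; the paper's is softer but needs no expansion around the minimizer.
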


\begin{proof}
Embed $M_n$ into the top-left corner of $M_{n+1}$. Since all entries are nonnegative, we have
\(M_n\le M_{n+1}\) componentwise,
and therefore by monotonicity of spectral radius for nonnegative matrices, for all \(n\) it holds that
\( \rho(\widetilde{\mathcal B}_{n-1})=\rho(M_n)\le \rho(M_{n+1})=\rho(\widetilde{\mathcal B}_{n}). \)
Hence $\bigl(\rho(\widetilde{\mathcal B}_{n-1})\bigr)_{n\ge 1}$ is monotone increasing, an thus
\( \lim_{n\to\infty}\rho(\widetilde{\mathcal B}_{n-1})\in(0,+\infty)\cup\{\infty\} \) exists.

We now show $\lim_{n\to\infty}\rho(\widetilde{\mathcal B}_{n-1})< \infty$. Fix any $r\in(0,K_{\mathrm D}^{-1})$ and define
\( x^{(n)}:=(1,r,r^2,\dots,r^{n-1})^\top\in\mathbb R^n_{++}.\)
For $1\le i<n$, the $i$-th row of $M_n$ has the form
\(
(u_i,u_{i-1},\dots,u_1,\beta,0,\dots,0);
\)
hence
\begin{align*}
\frac{(M_nx^{(n)})_i}{x^{(n)}_i}
&=
\sum_{k=1}^i u_k r^{k-1}+\frac{\beta}{r}
\le
\sum_{k\ge 1}u_k r^{k-1}+\frac{\beta}{r}
=
F(r).
\end{align*}
For the last row ($i=n$), there is no $\beta$-term, and thus
\[
\frac{(M_nx^{(n)})_n}{x^{(n)}_n}
=
\sum_{k=1}^n u_k r^{k-1}
\le
\sum_{k\ge 1}u_k r^{k-1}
\le F(r).
\]
Thus,
\( M_nx^{(n)}\le F(r)x^{(n)}. \)
By the Collatz--Wielandt bound,
\(
\rho(\widetilde{\mathcal B}_{n-1})=\rho(M_n)\le F(r)\) for every \(n.\)
Since this holds for every \(r\in(0,K_{\mathrm D}^{-1})\), we get
\(
\sup_n \rho(\widetilde{\mathcal B}_{n-1})\le \inf_{0<r<K_{\mathrm D}^{-1}}F(r)<\infty.
\)
Therefore
\(
\rho(\widetilde{\mathcal B}_{\infty})
:=
\lim_{n\to\infty}\rho(\widetilde{\mathcal B}_{n-1})
\in(0,\infty).
\)

We claim that $F$ has a unique minimizer on $(0,K_{\mathrm D}^{-1})$.
Indeed,
\[
F(r)=\frac{\beta}{r}+\frac{\bar LK_1r}{\rho(1-K_{\mathrm D}r)},\qquad
F'(r)=-\frac{\beta}{r^2}+\frac{\bar LK_1}{\rho(1-K_{\mathrm D}r)^2},
\text{ and }
F''(r)=\frac{2\beta}{r^3}+\frac{2\frac{\bar LK_1}{\rho}K_{\mathrm D}}{(1-K_{\mathrm D}r)^3}>0.
\]
Hence $F$ is strictly convex on $(0,K_{\mathrm D}^{-1})$.
Moreover,
\( \lim_{r\downarrow 0}F(r)=+\infty,\) and \( \lim_{r\nearrow K_{\mathrm D}^{-1}}F(r)=+\infty\).
Therefore, $F$ attains its minimum at a unique point
\( t_*\in(0,K_{\mathrm D}^{-1}). \)
Set
\( m_*:=F(t_*)=\min_{0<r<K_{\mathrm D}^{-1}}F(r). \)

For each $n$, let $v^{(n)}=(v^{(n)}_1,\dots,v^{(n)}_n)^\top>0$ be a Perron vector of $M_n$,
normalized by
\(
v^{(n)}_n=1,\)
and \(
M_n v^{(n)}=\rho(\widetilde{\mathcal B}_{n-1})v^{(n)}.
\)
Because of the structure of \(M_n\), the eigenvalues are characterized by the following equations:

\smallskip
\noindent\emph{Row \(i\) (\(1\le i<n\)):}
\begin{equation}\label{eq:BT_evi}
\sum_{k=1}^{i}u_k\,v^{(n)}_{\,i-k+1} + \beta\,v^{(n)}_{\,i+1}
=
\rho(\widetilde{\mathcal B}_{n-1})\,v^{(n)}_{\,i}.
\end{equation}

\noindent\emph{Row \(n\):}
\begin{equation}\label{eq:BT_evn}
\sum_{k=1}^{n}u_k\,v^{(n)}_{\,n-k+1}
=
\rho(\widetilde{\mathcal B}_{n-1})\,v^{(n)}_{\,n}
=
\rho(\widetilde{\mathcal B}_{n-1}).
\end{equation}
Define the ratios
\(
\mathfrak q_i^{(n)}:=\frac{v^{(n)}_i}{v^{(n)}_{i+1}},
\)
\(
i=1,\dots,n-1.
\)
Next, we establish a uniform lower bound.
From \eqref{eq:BT_evi}, we obtain
\(
\rho(\widetilde{\mathcal B}_{n-1})v^{(n)}_i\ge \beta\,v^{(n)}_{i+1},
\)
and thus
\begin{equation}\label{eq:BT_lower_ratio}
\mathfrak q_i^{(n)}=\frac{v^{(n)}_i}{v^{(n)}_{i+1}}
\ge \frac{\beta}{\rho(\widetilde{\mathcal B}_{n-1})}
\ge \frac{\beta}{m_*}
=
\frac{\beta}{m_*},
\qquad
1\le i\le n-1.
\end{equation}

\smallskip
\noindent\emph{Uniform upper bound.}
Since \(u_2=\frac{\bar L K_1}{\rho}>0\), for \(1\le i\le n-2\), using \eqref{eq:BT_evi} with \(i+1\) in place of \(i\),
\[
\rho(\widetilde{\mathcal B}_{n-1})v^{(n)}_{i+1}
=
\sum_{k=1}^{i+1}u_k v^{(n)}_{i+2-k}+\beta v^{(n)}_{i+2}
\ge u_2 v^{(n)}_i
=
\frac{\bar L K_1}{\rho}\,v^{(n)}_i.
\]
Hence
\begin{equation}\label{eq:BT_upper_ratio_mid}
\mathfrak q_i^{(n)}
=
\frac{v^{(n)}_i}{v^{(n)}_{i+1}}
\le \frac{\rho}{\bar L K_1}\,\rho(\widetilde{\mathcal B}_{n-1})
\le \frac{\rho m_*}{\overline L K_1},
\qquad
1\le i\le n-2.
\end{equation}
For \(i=n-1\), \eqref{eq:BT_evn} gives
\[
\rho(\widetilde{\mathcal B}_{n-1})
=
\sum_{k=1}^{n}u_k v^{(n)}_{n-k+1}
\ge u_2 v^{(n)}_{n-1}
=
\frac{\bar L K_1}{\rho} v^{(n)}_{n-1} \implies \mathfrak q_{n-1}^{(n)}
=
\frac{v^{(n)}_{n-1}}{v^{(n)}_n}
=
v^{(n)}_{n-1}
\le \frac{\rho}{\bar L K_1}\,\rho(\widetilde{\mathcal B}_{n-1})
\le \frac{\overline L K_1m_*}{\rho}.
\]
Combining with \eqref{eq:BT_lower_ratio}, we have for all \(n\ge 2\) and \(1\le i\le n-1\),
\begin{equation}\label{eq:BT_ratio_bounds}
\beta(m_*)^{-1}
\le
\mathfrak q_i^{(n)}
\le
m_*\left(\frac{\bar L K_1}{\rho}\right)^{-1}.
\end{equation}

Fix \(i\ge 1\). By \eqref{eq:BT_ratio_bounds}, the sequence
\((\mathfrak q_i^{(n)})_{n\ge i+1}\) is bounded in \((0,\infty)\). Thus, by the Bolzano--Weierstrass Theorem,
it has a convergent subsequence. Applying a standard diagonal argument,
there exists a subsequence \((n_\ell)\) such that for every fixed \(i\ge 1\),
\begin{equation}\label{eq:BT_q_limit}
\mathfrak q_i^{(n_\ell)}\longrightarrow \mathfrak q_i\in(0,\infty)
\qquad (\ell\to\infty).
\end{equation}
Since \(\rho(\widetilde{\mathcal B}_{n-1})\to \rho(\widetilde{\mathcal B}_\infty)\), along the same subsequence we also have
\begin{equation}\label{eq:BT_p_subseq}
\lim_{\ell \to \infty} \rho(\widetilde{\mathcal B}_{n_\ell-1}) =\rho(\widetilde{\mathcal B}_\infty).
\end{equation}
Now fix \(i\ge 1\). For all sufficiently large \(\ell\), we have \(n_\ell>i\), and dividing
\eqref{eq:BT_evi} (for \(n=n_\ell\)) by \(v^{(n_\ell)}_i\) yields
\[
\rho(\widetilde{\mathcal B}_{n_\ell-1})
=
\sum_{k=1}^i u_k\,\frac{v^{(n_\ell)}_{i-k+1}}{v^{(n_\ell)}_i}
+\frac{\beta}{\mathfrak q_i^{(n_\ell)}}
=
\sum_{k=1}^i u_k \prod_{j=i-k+1}^{i-1}\mathfrak q_j^{(n_\ell)}
+\frac{\beta}{\mathfrak q_i^{(n_\ell)}}.
\]
Letting \(\ell\to\infty\) and using \eqref{eq:BT_q_limit}--\eqref{eq:BT_p_subseq}, we obtain
\begin{equation}\label{eq:BT_limit_eq_fixed_i}
\rho(\widetilde{\mathcal B}_\infty)
=
\sum_{k=1}^i u_k \prod_{j=i-k+1}^{i-1}\mathfrak q_j
+\frac{\beta}{\mathfrak q_i}.
\end{equation}

Define
\(
\mathfrak q_{\min}:=\liminf_{i\to\infty}\mathfrak q_i.
\)
By \eqref{eq:BT_ratio_bounds}, we have
\(
\beta(m_*)^{-1}\le \mathfrak q_i\le m_*\left( \frac{\overline LK_1}{\rho}\right)^{-1}
\)
for all \(i\),
and thus in particular
\(
\mathfrak q_{\min}\ge \beta(m_*)^{-1}>0.
\)

Fix \(\varepsilon>0\) and \(K\in\mathbb N\). By the definition of \(\liminf\), there exists \(N\) such that
\(
\mathfrak q_j\ge \mathfrak q_{\min}-\varepsilon\)
for all \(j\ge N,\)
and there are infinitely many indices \(i\) such that
\(
\mathfrak q_i\le \mathfrak q_{\min}+\varepsilon.
\)
Choose one such \(i\ge N+K\). Applying \eqref{eq:BT_limit_eq_fixed_i} at this index \(i\), and using the estimate
\(\mathfrak q_i\le \mathfrak q_{\min}+\varepsilon\), we get
\begin{equation}\label{eq:BT_d1}
\rho(\widetilde{\mathcal B}_\infty)
\ge
\frac{\beta}{\mathfrak q_{\min}+\varepsilon}
+\sum_{k=1}^K u_k \prod_{j=i-k+1}^{i-1}\mathfrak q_j.
\end{equation}
For \(1\le k\le K\), all indices \(j\in[i-k+1,i-1]\) lie in \([N,\infty)\); hence
\[
\mathfrak q_j\ge \mathfrak q_{\min}-\varepsilon
\quad\Longrightarrow\quad
\prod_{j=i-k+1}^{i-1}\mathfrak q_j
\ge
(\mathfrak q_{\min}-\varepsilon)^{k-1}.
\]
Substituting into \eqref{eq:BT_d1} yields
\[
\rho(\widetilde{\mathcal B}_\infty)
\ge
\frac{\beta}{\mathfrak q_{\min}+\varepsilon}
+\sum_{k=1}^K u_k(\mathfrak q_{\min}-\varepsilon)^{k-1}.
\]
Since this is true for every \(K\), we let \(K\to\infty\) and obtain
\begin{equation}\label{eq:BT_d2}
\rho(\widetilde{\mathcal B}_\infty)
\ge
\frac{\beta}{\mathfrak q_{\min}+\varepsilon}
+\sum_{k\ge 1}u_k(\mathfrak q_{\min}-\varepsilon)^{k-1}.
\end{equation}

We already know \(\mathfrak q_{\min}>0\). We now show that
\(
\mathfrak q_{\min}<K_{\mathrm D}^{-1}
\)
(with the convention that \(K_{\mathrm D}^{-1}=+\infty\) if \(\frac{\overline LK_1}{\rho}=0).\)
If \(\frac{\overline LK_1}{\rho}=0\), this consequence is vacuous. Thus, it remains to consider the case \(\frac{\overline LK_1}{\rho}>0\). For the sake of contraction, if \(\mathfrak q_{\min}\ge K_{\mathrm D}^{-1}\), then for sufficiently small \(\varepsilon>0\),
\(
\mathfrak q_{\min}-\varepsilon\ge K_{\mathrm D}^{-1}.
\)
But then
\[
\sum_{k\ge 1}u_k(\mathfrak q_{\min}-\varepsilon)^{k-1}
=
\sum_{k\ge 2} \frac{\overline LK_1}{\rho} {K_{\mathrm D}}^{k-2}(\mathfrak q_{\min}-\varepsilon)^{k-1}
=
\frac{\overline LK_1}{\rho}(\mathfrak q_{\min}-\varepsilon)\sum_{m\ge 0}\bigl(K_{\mathrm D}(\mathfrak q_{\min}-\varepsilon)\bigr)^m
\]
diverges (because \(K_{\mathrm D}(\mathfrak q_{\min}-\varepsilon)\ge 1\)), which contradicts \eqref{eq:BT_d2} together with
\(\rho(\widetilde{\mathcal B}_\infty)<\infty\). Therefore \(\mathfrak q_{\min}<K_{\mathrm D}^{-1}\).

Now let \(\varepsilon\downarrow 0\) in \eqref{eq:BT_d2}; by continuity of \(F\) on \((0,K_{\mathrm D}^{-1})\),
\begin{equation}\label{eq:BT_lower_liminf}
\rho(\widetilde{\mathcal B}_\infty)
\ge
\frac{\beta}{\mathfrak q_{\min}}+\sum_{k\ge 1}u_k \mathfrak q_{\min}^{k-1}
=
F(\mathfrak q_{\min}).
\end{equation}

Since \(\mathfrak q_{\min}\in(0,K_{\mathrm D}^{-1})\), by minimality of \(F\) under \(t_*\) we have
\(
m_*=\min_{0<r<K_{\mathrm D}^{-1}}F(r)\le F(\mathfrak q_{\min}).
\)
In particular, it holds that
\(
m_* \le F(\mathfrak q_{\min}) \le \rho(\widetilde{\mathcal B}_\infty) \le m_*.
\)
Therefore,
\(
\rho(\widetilde{\mathcal B}_\infty)=m_*=\min_{0<r<K_{\mathrm D}^{-1}}F(r).
\)
This completes the proof.
\end{proof}

Set
\(
M_n:=\widetilde{\mathcal B}_{n-1}^{\top}\in\mathbb R^{n\times n},
\)
where \(n=T+1\).
Recall that the limiting Perron root is characterized by
\[
\rho(\widetilde{\mathcal B}_\infty)
=
\min_{0<r<K_{\mathrm D}^{-1}}F(r),
\qquad
F(r):=\frac{\beta}{r}+\frac{\overline LK_1r}{\rho(1-K_{\mathrm D} r)}.
\]
Let \(t_*\in(0,K_{\mathrm D}^{-1})\) be the unique minimizer of \(F\), and define
\[
\bar t_*^{(n)}
:=
(t_*^{\,n-1},t_*^{\,n-2},\dots,1)^\top
\in
\mathbb R^n.
\]

We now use \(\bar t_*^{(n)}\) to construct a Lyapunov function that controls the convergence to the infinite-horizon problem. First, we apply Collatz--Wielandt-type bounds to obtain an upper bound on \(\rho(\widetilde{\mathcal B}_T)\) using geometric test functions, as in the proof of Lemma~\ref{lem:BT_variational}.

\begin{lemma}\label{lem:BT_weighted_super}
For every $T\in\mathbb N$, letting $n:=T+1$, we have
\[
\max_{1\le i\le n}
\frac{\bigl(\widetilde{\mathcal B}_{T}^{\top}\,\bar t_*^{(n)}\bigr)_i}{\bar t^{(n)}_{*,i}}
\le
F(t_*)
=
\rho(\widetilde{\mathcal B}_\infty).
\]
Equivalently, componentwise it holds that
\(
\widetilde{\mathcal B}_{T}^{\top}\,\bar t_*^{(n)}
\le
\rho(\widetilde{\mathcal B}_\infty)\,\bar t_*^{(n)}.
\)
\end{lemma}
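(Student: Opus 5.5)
The plan is a direct Collatz--Wielandt-type computation, essentially the one already carried out in the proof of Lemma~\ref{lem:BT_variational}. This time the geometric test vector is oriented so that it decays toward the last component, and it is evaluated at the specific point $r=t_*$.

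\textbf{Step 1: row structure.} Write $n=T+1$ and $M_n=\widetilde{\mathcal B}_T^{\top}$ as in Lemma~\ref{lem:BT_variational}. For $1\le i<n$, the $i$-th row of $M_n$ has $u_{i-j+1}$ in column $j\le i$, the entry $\beta$ in column $i+1$, and zeros elsewhere. The last row has no $\beta$-entry. Here $u_1=0$ and $u_k=\frac{\bar LK_1}{\rho}K_{\mathrm D}^{k-2}$ for $k\ge2$. I would verify this directly from the definition of $\widetilde{\mathcal B}_{T}$ in Lemma~\ref{lem:33333} by transposing. This is the step where orientation errors are most likely, so I expect it to be the main, if only bookkeeping, obstacle.

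\textbf{Step 2: interior rows.} Write $x_i:=\bar t^{(n)}_{*,i}=t_*^{\,n-i}$. For $1\le i<n$,
\[
(M_nx)_i=\sum_{k=1}^{i}u_k\,t_*^{\,n-i+k-1}+\beta\,t_*^{\,n-i-1}.
\]
Dividing by $x_i=t_*^{\,n-i}$ gives the ratio
\[
\sum_{k=1}^{i}u_k t_*^{k-1}+\frac{\beta}{t_*}.
\]
Since every $u_k\ge0$ and $t_*>0$, this partial sum is at most $\sum_{k\ge1}u_kt_*^{k-1}$. That series converges because $t_*\in(0,K_{\mathrm D}^{-1})$, and its value is $\frac{\bar LK_1t_*}{\rho(1-K_{\mathrm D}t_*)}$. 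Hence the ratio is at most $F(t_*)$.

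\textbf{Step 3: last row.} For $i=n$ we have $x_n=1$, and the ratio is $\sum_{k=1}^{n}u_kt_*^{k-1}$. This is bounded by the same convergent series, which is at most $F(t_*)$ because $\beta/t_*>0$.

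\textbf{Step 4: conclusion.} Taking the maximum over $i$ gives the stated inequality with constant $F(t_*)$. By Lemma~\ref{lem:BT_variational}, $t_*$ is the unique minimizer of $F$ on $(0,K_{\mathrm D}^{-1})$ and $F(t_*)=\min F=\rho(\widetilde{\mathcal B}_\infty)$, which yields the identification with $\rho(\widetilde{\mathcal B}_\infty)$. The componentwise form follows immediately because all components of $\bar t_*^{(n)}$ are positive. No estimate here depends on $T$, which is exactly the uniformity needed later for the Lyapunov argument.
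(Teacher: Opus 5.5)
Your proposal is correct and matches the paper's proof. The paper likewise computes $(\widetilde{\mathcal B}_T^{\top}\bar t_*^{(n)})_i/\bar t^{(n)}_{*,i}$ row by row with $\bar t^{(n)}_{*,i}=t_*^{\,n-i}$, obtaining $\sum_{k\le i}u_kt_*^{k-1}+\beta/t_*$ for $i<n$ and $\sum_{k\le n}u_kt_*^{k-1}$ for the last row, bounds each ratio by $F(t_*)$, and identifies $F(t_*)=\rho(\widetilde{\mathcal B}_\infty)$ via Lemma~\ref{lem:BT_variational}; the only cosmetic difference is that the paper treats the first row (where $u_1=0$ leaves $\beta/t_*$) as a separate case, while you absorb it into the interior-row computation.
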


\begin{proof}
We prove the bound row by row.

Write $\bar t:=\bar t_*^{(n)}$ to simplify notation, and
\(
\bar t_i=t_*^{\,n-i},\) for \(i=1,\dots,n.\)
Hence for any admissible set of indices,
\(
\bar t_{i+1}t_*=\bar t_i,
\)
\(
\bar t_{i+\ell}=t_*^{-\ell}\bar t_i,
\)
and
\(
\bar t_{i-\ell}=t_*^{\ell}\bar t_i.
\)

\smallskip
\noindent\emph{Case 1: first row ($i=1$).}
Since the first diagonal entry of $\widetilde{\mathcal B}_T^\top$ is $0$, the first row is
$\left (0, \beta , 0, 0,\dots,0 \right).$
Therefore
\(
\frac{(\widetilde{\mathcal B}_T^{\top}\bar t)_1}{\bar t_1}
\le
F(t_*)
\)
follows directly.

\smallskip
\noindent\emph{Case 2: rows $2\le i\le n-1$.}
Under \(M_n=\widetilde{\mathcal B}_T^\top\), where the vector $\bar t$ satisfies
the row identity used in the proof of Lemma~\ref{lem:BT_variational}:
\[
\frac{(M_n\bar t)_i}{\bar t_i}
=
\sum_{k=1}^i u_k t_*^{k-1}+\frac{\beta}{t_*}
\le
\sum_{k\ge 1}u_k t_*^{k-1}+\frac{\beta}{t_*}
=
F(t_*)
\qquad(1\le i\le n-1).
\]
Since \(M_n=\widetilde{\mathcal B}_T^\top\), this is exactly the estimate needed for the
left-weighted inequality in Lemma~\ref{lem:BT_weighted_inner} below.

So, for the purposes of the Lyapunov argument, the relevant componentwise estimate is
\( M_n\bar t\le F(t_*)\bar t.\)

\smallskip
\noindent\emph{Case 3: last row ($i=n$).}
Again using the row structure of \(M_n\),
\[
\frac{(M_n\bar t)_n}{\bar t_n}
=
\sum_{k=1}^n u_k t_*^{k-1}
\le
\sum_{k\ge 1}u_k t_*^{k-1}
\le
F(t_*).
\]
Combining the previous two bounds (for rows \(1,\dots,n-1\) and row \(n\)) gives
\(
M_n\bar t\le F(t_*)\bar t.
\)
Since \(F(t_*)=\rho(\widetilde{\mathcal B}_\infty)\) by Lemma~\ref{lem:BT_variational},
the proof is complete.
\end{proof}

Next, using the fact that we can use positive geometric test vectors, we construct our Lyapunov functions.

\begin{lemma}
\label{lem:BT_weighted_inner}
With the same notation as above, for every $T\in\mathbb N$ and every
positive vector $y\in\mathbb R^{T+1}_{+}$, we have
\[
\bigl\langle \bar t_*^{(T)},\, \widetilde{\mathcal B}_T y \bigr\rangle
\le
\rho(\widetilde{\mathcal B}_\infty)\,
\bigl\langle \bar t_*^{(T)},\, y \bigr\rangle
\qquad
\bigl(\bar t_*^{(T)}=(t_*^T,\dots,1)\bigr).
\]
\end{lemma}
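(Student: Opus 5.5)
The inequality is a duality statement: we move the matrix across the inner product and then apply the componentwise bound of Lemma~\ref{lem:BT_weighted_super}. For any $y\in\mathbb R^{T+1}_+$ we write
\[
\bigl\langle \bar t_*^{(T)},\,\widetilde{\mathcal B}_T y\bigr\rangle
=
\bigl\langle \widetilde{\mathcal B}_T^{\top}\bar t_*^{(T)},\, y\bigr\rangle .
\]
This identity is just the definition of the transpose under the Euclidean inner product on $\mathbb R^{T+1}$.

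Next we apply Lemma~\ref{lem:BT_weighted_super} with $n=T+1$. It gives the componentwise inequality $\widetilde{\mathcal B}_T^{\top}\bar t_*\le \rho(\widetilde{\mathcal B}_\infty)\bar t_*$. Since every entry of $y$ is nonnegative, pairing this inequality with $y$ preserves its direction:
\[
\bigl\langle \widetilde{\mathcal B}_T^{\top}\bar t_*,\,y\bigr\rangle
\le
\rho(\widetilde{\mathcal B}_\infty)\,\bigl\langle \bar t_*,\,y\bigr\rangle .
\]
Combining the two displays yields the claim.

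The only care needed is bookkeeping of indices. The vector in the statement is written $\bar t_*^{(T)}=(t_*^T,\dots,1)$, which has $T+1$ entries. It therefore coincides with $\bar t_*^{(n)}$ for $n=T+1$, as used in Lemma~\ref{lem:BT_weighted_super}, and its length matches the size of $\widetilde{\mathcal B}_T$.

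The main point to double-check is that Lemma~\ref{lem:BT_weighted_super} really covers all rows of $M_n=\widetilde{\mathcal B}_T^\top$ with the geometric vector ordered so that $\bar t_{i+1}t_*=\bar t_i$. I would verify this ordering against the row structure $(u_i,\dots,u_1,\beta,0,\dots)$ used in Lemma~\ref{lem:BT_variational}, where the test vector was $(1,r,\dots,r^{n-1})$. If the orientations disagree, I would reverse the indexing of $\bar t_*$ consistently in both lemmas; the row-wise Collatz--Wielandt estimate $\sum_k u_k t_*^{k-1}+\beta/t_*\le F(t_*)$ is unaffected by this relabeling. No positivity beyond $y\ge 0$ is required.
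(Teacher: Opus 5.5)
Your proposal is correct and is essentially the paper's proof: move $\widetilde{\mathcal B}_T$ across the Euclidean inner product, then pair the componentwise bound $\widetilde{\mathcal B}_T^{\top}\bar t_*\le\rho(\widetilde{\mathcal B}_\infty)\bar t_*$ from Lemma~\ref{lem:BT_weighted_super} with $y\ge 0$. On your orientation check: the decreasing vector $\bar t_i=t_*^{n-i}$ is the correct one for rows $(u_i,\dots,u_1,\beta,0,\dots)$, since it gives the row ratio $\sum_{k\le i}u_k t_*^{k-1}+\beta/t_*$; the reversed vector is the increasing test vector in Lemma~\ref{lem:BT_variational}, which only amounts to the reparametrization $r\mapsto 1/r$ and does not affect the lemma you invoke.
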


\begin{proof}
By Lemma~\ref{lem:BT_weighted_super} (in the transpose formulation), we have
\(
\widetilde{\mathcal B}_T^{\top}\bar t_*^{(n)}
\le
\rho(\widetilde{\mathcal B}_\infty)\,\bar t_*^{(n)}
\)
componentwise. 
Now let \(y\in\mathbb R^n_+\). Since \(y\ge 0\), taking the Euclidean inner product with \(y\)
preserves the inequality:
\(
\langle \widetilde{\mathcal B}_T^{\top}\bar t_*^{(T+1)},\, y\rangle
\le
\rho(\widetilde{\mathcal B}_\infty)\,\langle \bar t_*^{(T+1)},\, y\rangle.
\)
In particular, the desired left-weight contraction form is
\(
\langle \bar t_*^{(T+1)},\, \widetilde{\mathcal B}_{T} y\rangle
\le
\rho(\widetilde{\mathcal B}_\infty)\,\langle \bar t_*^{(T+1)},\, y\rangle,
\)
which is exactly the claimed inequality.
\end{proof}

\begin{remark}
The weights we use in the lemma above are different from those used in Lemma \ref{lem:aa}.
\end{remark}

Using the Lyapunov function constructed above, we now prove Theorem \ref{thrm:b} by approximating infinite-horizon TRMFEs via finite-horizon TRMFEs. Furthermore, we derive explicit convergence rates.

\begin{theorem}\label{thrm:conv}
    Let $(\pmb Q^T,\pmb\mu^T) \in \mathrm{MFE}$ and \((\pmb Q^{\infty},\pmb \mu^{\infty}) \in \mathrm{TRMFE}\). Suppose that there exists $t_o>1$ such that $F(t_o)<1$. Then, for all $t \ll T,$ we have
    \[
    \|Q^{\infty}_{-t}|_T-Q^T_{-t}\|_{\infty} \le O\left( \frac 1{t^{T-t}_o}\right).
    \]
    In particular, $|\mathrm{TRMFE}|=1$, i.e., there exists a unique TRMFE under any given terminal \(Q\)-function in \(\mathcal C_{\beta}\).
\end{theorem}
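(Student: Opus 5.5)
The plan is to compare the infinite-horizon TRMFE, truncated to the window $\{-T,\dots,0\}$, with the finite-horizon MFE that has the same terminal $Q$-function. The error vector will be controlled through the matrix inequality of Lemma~\ref{lem:33333} together with the geometric Lyapunov weights of Lemma~\ref{lem:BT_weighted_inner}.

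\textbf{Step 1: setup.} Fix $Q_{\mathrm{terminal}}\in\mathcal C_\beta$. Let $(\pmb Q^\infty,\pmb\mu^\infty)$ be a TRMFE and, for each $T$, let $(\pmb Q^T,\pmb\mu^T)$ be a finite-horizon MFE. Both have terminal $Q$-function $Q_{\mathrm{terminal}}$, and the finite-horizon one is started at $\mu^T_{-T}:=\mu^\infty_{-T}$. With this choice, the truncation $\pmb Q^\infty|_T=(Q^\infty_{-t})_{t=0}^T$ is a fixed point of the finite-horizon map $\pmb Q\mapsto Q^{\pmb\mu^{\pmb Q}}$ from Lemma~\ref{lem:33333} with initial measure $\mu^\infty_{-T}$. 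The same holds for $\pmb Q^T$. Define the error vector $e=(\|Q^\infty_{-t}-Q^T_{-t}\|_\infty)_{t=1}^T\ge 0$.

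\textbf{Step 2: propagate the error.} Apply Lemma~\ref{lem:33333} to these two fixed points. This gives $e\le \widetilde{\mathcal B}_{T-1}e$. Pair both sides with weights chosen from the one-parameter geometric family $(1,r,\dots,r^{T-1})$ of Lemmas~\ref{lem:BT_weighted_super}--\ref{lem:BT_weighted_inner}. The Collatz--Wielandt row computations there hold for every $r\in(0,K_{\mathrm D}^{-1})$, not only for $t_*$, and so yield componentwise $\widetilde{\mathcal B}^\top w\le F(r)w$. Choosing $r=t_o$ gives
\[
\langle w,e\rangle\le F(t_o)\langle w,e\rangle ,\qquad F(t_o)<1,
\]
and hence $\langle w,e\rangle=0$. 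Because all weights are positive, $e=0$, i.e. $Q^T_{-t}=Q^\infty_{-t}$ for all $t\le T$.

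\textbf{Step 3: the general rate.} For a finite-horizon MFE with an arbitrary initial measure $\mu^T_{-T}$, the mismatch $\|\mu^T_{-T}-\mu^\infty_{-T}\|_1\le 2$ enters only at the earliest index. Using Lemmas~\ref{lem:7} and \ref{lem:8}, it propagates into the error inequality as an inhomogeneous term $d$, so that $e\le\widetilde{\mathcal B}e+d$, where $d$ is supported at time $-T$ (its propagated effect decays geometrically). The geometric weights that grow toward the past, namely $w_t=t_o^{t}$ with $t_o>1$, then give
\[
t_o^{t}\,e_t\le\langle w,e\rangle\le \frac{\langle w,d\rangle}{1-F(t_o)}=O(t_o^{T}).
\]
This should yield the claimed $\|Q^\infty_{-t}-Q^T_{-t}\|_\infty=O(t_o^{-(T-t)})$, with the correct orientation of weights fixed so that $d$ sits on the smallest weight.

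\textbf{Step 4: uniqueness.} Let two TRMFEs be given. Take $T\to\infty$ and compare both with the same finite-horizon family via the triangle inequality, at fixed $t$. This forces $Q^{\infty,1}_{-t}=Q^{\infty,2}_{-t}$ for every $t$. The policies then coincide, since regularization gives unique minimizers. For the measures, apply Step 2 with $\mu^T_{-T}$ equal to each TRMFE's own measure: each truncation equals the corresponding finite-horizon solution. Hence the $Q$'s determine the $\mu$'s forward through $H_2$ once $\mu_{-T}$ agrees, and the $\mu$ differences at time $-t$ are bounded by $K_{\mathrm D}^{T-t}\cdot 2$ plus $Q$-terms that are already zero.

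\textbf{Main obstacle.} The delicate step is Step 4's measure part when $K_{\mathrm D}\ge1$. There I would instead use $\mu_{-t+1}$'s dependence on $Q_{-t}$ together with the fact that the limit in Step 3 is taken with $t$ fixed. I would also double-check that the orientation of the geometric weights ($t_o>1$ weighting the past) matches the transpose structure in Lemma~\ref{lem:BT_weighted_super}. This orientation is what produces decay in $T-t$ rather than growth.
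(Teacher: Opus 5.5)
Your overall route is the paper's: the error inequality of Lemma~\ref{lem:33333}, geometric Lyapunov weights as in Lemma~\ref{lem:BT_weighted_super}, an inhomogeneous term from the initial-measure mismatch, and uniqueness by letting $T\to\infty$ and invoking the regularizer. Putting $t_o$ directly into the weights is cleaner than the paper's ``perturb so that $t_o=t_*$'' step. But Step~3 as written does not work. With $w_t=t_o^{t}$ (weights growing toward the past) one computes $(\widetilde{\mathcal B}_{T-1}^{\top}w)_s/w_s=\beta t_o+\frac{\bar LK_1}{\rho}\sum_{m=1}^{s-1}K_{\mathrm D}^{m-1}t_o^{-m}$, which is bounded by $F(1/t_o)$, not $F(t_o)$. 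The first row ratio is exactly $\beta t_o$, which exceeds $1$ whenever $t_o>1/\beta$, so the Lyapunov inequality you need fails. Even granting it, your display gives $e_t=O(t_o^{T-t})$, i.e.\ growth rather than decay. Your parenthetical criterion (``$d$ sits on the smallest weight'') is the right one. However, it contradicts both $w_t=t_o^{t}$ and the ``$t_o>1$ weighting the past'' orientation you propose to check: since $d$ is concentrated near time $-T$, the weights must be largest near the terminal time. The family $(1,r,\dots,r^{T-1})$ in Step~2 has the same inversion (its row ratios are $F(1/r)$). That is harmless there, because $e\le\widetilde{\mathcal B}_{T-1}e$ together with $\rho(\widetilde{\mathcal B}_{T-1})\le F(t_o)<1$ already forces $e=0$.

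The fix is to take $w_t=t_o^{T-t}$ for $t=1,\dots,T$, which is the paper's $\bar t_*^{(n)}=(t_*^{n-1},\dots,1)$. Then $(\widetilde{\mathcal B}_{T-1}^{\top}w)_s/w_s=\beta/t_o+\frac{\bar LK_1}{\rho}\sum_{m=1}^{s-1}K_{\mathrm D}^{m-1}t_o^{m}\le F(t_o)$. The inhomogeneity is not supported only at $-T$: it is $d_t=\bar LK_{\mathrm D}^{T-t}\delta$ for every $t$, with $\delta=\|\mu^T_{-T}-\mu^\infty_{-T}\|_1\le 2$. Hence $\langle w,d\rangle=\bar L\delta\sum_{m=0}^{T-1}(t_oK_{\mathrm D})^m\le \bar L\delta/(1-t_oK_{\mathrm D})$, and therefore $t_o^{T-t}e_t\le\langle w,e\rangle\le 2\bar L/\bigl((1-F(t_o))(1-t_oK_{\mathrm D})\bigr)$ uniformly in $T$. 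The inequality $t_oK_{\mathrm D}<1$ is the ingredient missing from your sketch, and it is automatic because $t_o$ lies in the domain $(0,K_{\mathrm D}^{-1})$ of $F$. The same remark removes your ``main obstacle'': $t_o>1$ forces $K_{\mathrm D}<1$, so the case $K_{\mathrm D}\ge 1$ never arises. Once the $Q$-flows agree, $\|\mu^{1}_{-t}-\mu^{2}_{-t}\|_1\le 2K_{\mathrm D}^{T-t}\to 0$ closes Step~4. You can also drop the auxiliary finite-horizon family in Step~4 and apply the corrected Step~3 inequality directly to the two TRMFEs, with $\delta=\|\mu^{1}_{-T}-\mu^{2}_{-T}\|_1$.
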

\begin{proof}[Proof of Theorem \ref{thrm:conv}]
If $t_o \not = t_*$, then we can perturb $\mathcal {\tilde B}_T$ so that its spectral radius is $F(t_o)$. Thus, without loss of generality, we will suppose that $t_*>1$ for the remainder of this proof.

Let $(\pmb Q^T,\pmb \mu^T)=\left((Q^T_{-t})_t,(\mu^T_{-t})_{t=0}^{T-1}\right)$ be a finite-horizon MFE obtained under the horizon length starting from $\mu_0$ under the terminal \(Q\)-function \(Q_{0} \in \mathcal C_{\beta}\). Let \(\pmb Q^{\infty},\pmb \mu^{\infty}\) be the infinite-horizon TRMFE obtained under \(Q_0\). Then, under the Euclidean inner product $\langle \cdot, \cdot \rangle$, arguing as in Lemma \ref{lem:4}, and using Lemma \ref{lem:BT_weighted_super}, we obtain that
\begin{align*}
&\langle \hat t_*, \left(\|Q^{\infty}_{-t}|_T-Q^T_{-t}\|_{\infty}\right)_{t=1}^{T-1} \rangle \\&\le
\langle \hat t_*, {\mathcal{\tilde B_T}} \left(\| Q^{\infty}_{-t}|_T - Q^T_{-t}\|_{\infty}\right)_{t=1}^{T-1}\rangle +\langle \hat t_*, \bar{\mathcal B}\left( \underbrace{\|\mu^{\infty}_{-T}\|_{1},\|\mu^{\infty}_{-T}\|_{1},\cdots, \|\mu^{\infty}_{-T}\|_{1}}_{T-1 \text{ times }}\right)\rangle
\\&\le \rho(\mathcal B_{\infty}) \langle \hat t_*, \left(\| Q^{\infty}_{-t}|_T - Q^T_{-t}\|_{\infty}\right)_{t=1}^{T-1}\rangle 
 + \langle \hat t_*, \bar{\mathcal B}\left( \|\mu^{\infty}_{-T}\|_{1},\|\mu^{\infty}_{-T}\|_{1},\cdots, \|\mu^{\infty}_{-T}\|_{1}\right)\rangle,
\end{align*}
where $
    \bar{\mathcal B} 
    = 
    \begin{bmatrix}
    K_{\mathrm D}^T & 0 & \cdots & 0\\
    0 & K_{\mathrm D}^{T-1} & \cdots & 0\\
    \vdots & \vdots & \ddots & \vdots \\
    0 & 0 & \cdots & K_{\mathrm D} 
\end{bmatrix}.$
Here, the terms $\|\mu^{\infty}_{-T}\|_{\infty}$ are due to the fact that $\pmb \mu^{\infty}$ induces an infinite-horizon TRMFE, which can be directly obtained from Lemma \ref{lem:1} through recursive iterations as we did in the proof of Lemma \ref{lem:4}.

Since $\rho(\mathcal {\tilde B}_{\infty})<1$, we can rearrange the inequality above to obtain that 
\begin{align*}
    t^{T-t}_*\|Q^{\infty}_t|_T-Q^T_t\|_\infty &\le\langle \hat t_* , \left(\|Q^{\infty}_t|_T-Q^T_t\|_\infty\right)_{t=1}^{T-1} \rangle 
    \\&\le \frac 1{1-\rho(\mathcal {\tilde B}_{\infty})}\underbrace{\langle \hat t_*, \bar{\mathcal B}\left( \|\mu^{\infty}_T\|_{1},\|\mu^{\infty}_T\|_{1},\cdots, \|\mu^{\infty}_T\|_{1}\right)\rangle}_{=I}.
\end{align*}
It remains to control the term $I$. Note that, as $t_* K_{\mathrm D} < 1$, it holds that
\begin{align*}
I & \le  \max_i\|\mu^{\infty}_{-T}\|_{1} K_{\mathrm D} \sum_{j=1}^{T-1} K_{\mathrm D}^{T-j}t^{T-j}_* \le \max_i\|\mu^{\infty}_{-T}\|_{1}K_{\mathrm D}^2 t_* \frac{K_{\mathrm D}^{T-1} t_*^{T-1}-1}{K_{\mathrm D} t_*-1}.
\end{align*}
Consequently, for the differences at section $t$ throughout all the population groups, we obtain the inequality
\begin{align*}
    \|Q^{\infty}_{-t}|_T-Q^T_{-t}\|_\infty \le  \max_i\|\mu^{\infty}_T\|_{1}K_{\mathrm D}^2 t^{t+1}_* \frac{K_{\mathrm D}^{T-1}t_*-\frac1{t_*^{T-t}}}{K_{\mathrm D} t_* -1}.
\end{align*}
Since $K_{\mathrm D}<1$ and $t_* >1$, as $T \to \infty$, we obtain that $ \|Q^{\infty}_{-t}|_T-Q^T_{-t}\|_{\infty} \to 0$. Furthermore, since $t_*K_{\mathrm D} < 1$, the dominating term in the sum above is $\frac{1}{t^{T-t}_*}$, which gives us the desired error bound.

It remains to prove the uniqueness of the infinite-horizon non-stationary TRMFE under any given family of terminal \(Q\)-functions. For this purpose, we will demonstrate this for the terminal \(Q\)-function $Q_0$, without any loss of generality. For each $T$, there exists a unique finite-horizon TRMFE under \(Q_0\) as $\rho(\tilde{\mathcal B}_{\infty})<1$. Furthermore, the error bound above shows that state-measures obtained under this MFE converges to any infinite-horizon TRMFE. From the uniqueness of the limit, it follows that all the infinite-horizon non-stationary TRMFE are obtained under a single state-measure flow $\pmb \mu^{\infty}$. Lastly, one can use dynamical programming to induce a unique family of $Q$-functions under $\pmb \mu^{\infty}$. Due to the existence of the regularizer term in our setting, there is a unique policy induced by these $Q$-functions, which implies that there is a unique infinite-horizon TRMFE.
\end{proof}

Since the uniqueness criterion above relies on the value \(t_*\) that minimizes \(F\), we now provide a general result characterizing the existence of a point \(t_*>1\) such that \(F(t_*)<1\). Since the asymptotic behavior of \(\rho(\widetilde{\mathcal B}_T)\) agrees with \(\min_r F(r)\), this characterization allows us to describe the worst-case behavior of the system.

\begin{lemma}\label{lem:tstar-threshold}
Let
\[
F(r):=\frac{\beta}{r}+\frac{\bar L K_1r}{\rho(1-K_{\mathrm D}r)},\qquad r\in(0,K_{\mathrm D}^{-1}),
\]
where $\beta>0$, $\frac{\bar L K_1}{\rho}>0$, and $0\le K_{\mathrm D}<1$. Let $t_*$ be the unique minimizer of $F$ on
$(0,K_{\mathrm D}^{-1})$. Then:

\begin{enumerate}
    \item The minimizer is given by
    \[
    t_*=\frac{\sqrt{\beta}}{\sqrt \frac{\bar L K_1}{\rho}+K_{\mathrm D}\sqrt\beta}.
    \]
    \item We have
    \(
    t_*>1
    \) if, and only if,
    \(
    \frac{\bar L K_1}{\rho}<\beta(1-K_{\mathrm D})^2.
    \)
    Equivalently,
    \(
    \frac{\bar L K_1}{\rho}>\beta(1-K_{\mathrm D})^2
    \) if, and only if, \(
    t_*<1.
    \)
    \item The minimum value is
    \(
    F(t_*)=K_{\mathrm D}\beta+2\sqrt{\frac{\bar L K_1}{\rho}\beta},
    \)
    and therefore
    \(
    F(t_*)<1
    \) if, and only if, \(
    \frac{\bar L K_1}{\rho}<\frac{(1-K_{\mathrm D}\beta)^2}{4\beta}.
    \)
\end{enumerate}
\end{lemma}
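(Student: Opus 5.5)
The plan is a direct calculus computation. Throughout, write $a:=\frac{\bar L K_1}{\rho}>0$ and $K:=K_{\mathrm D}\in[0,1)$, so that $F(r)=\frac{\beta}{r}+\frac{ar}{1-Kr}$ on $(0,K^{-1})$. The proof of Lemma~\ref{lem:BT_variational} already shows that $F$ is strictly convex there and tends to $+\infty$ at both endpoints. Hence the unique minimizer $t_*$ is the unique zero of
\[
F'(r)=-\frac{\beta}{r^2}+\frac{a}{(1-Kr)^2}.
\]

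\emph{Part 1.} On $(0,K^{-1})$ both $r$ and $1-Kr$ are positive, so $F'(r)=0$ is equivalent to $\sqrt{\beta}\,(1-Kr)=\sqrt a\, r$. This gives $t_*=\sqrt\beta/(\sqrt a+K\sqrt\beta)$. We should also confirm that this point is admissible. Indeed $Kt_*=K\sqrt\beta/(\sqrt a+K\sqrt\beta)<1$ because $a>0$, so $t_*\in(0,K^{-1})$, and by strict convexity it is the minimizer.

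\emph{Part 2.} From the formula, $t_*>1$ holds iff $\sqrt\beta>\sqrt a+K\sqrt\beta$, i.e.\ iff $\sqrt\beta(1-K)>\sqrt a$. Since $K<1$, both sides are nonnegative, so squaring preserves the inequality and gives $a<\beta(1-K)^2$. The same chain with reversed inequalities gives $t_*<1$ iff $a>\beta(1-K)^2$. The boundary case $a=\beta(1-K)^2$ corresponds exactly to $t_*=1$, and I will note this explicitly so that the stated ``equivalently'' is read with strict inequalities on both sides.

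\emph{Part 3.} I evaluate the two terms of $F(t_*)$ separately. First, $\beta/t_*=\sqrt\beta(\sqrt a+K\sqrt\beta)=\sqrt{a\beta}+K\beta$. Next, $1-Kt_*=\sqrt a/(\sqrt a+K\sqrt\beta)$, so
\[
\frac{a t_*}{1-Kt_*}=\frac{a\sqrt\beta}{\sqrt a}=\sqrt{a\beta}.
\]
Adding the two terms gives $F(t_*)=K\beta+2\sqrt{a\beta}$. Because $K<1$ and $\beta<1$, we have $1-K\beta>0$. Therefore $F(t_*)<1$ holds iff $2\sqrt{a\beta}<1-K\beta$, and after squaring (both sides are nonnegative) iff $a<(1-K\beta)^2/(4\beta)$.

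There is no real obstacle in this argument. The only points that need care are the sign conditions that justify squaring, namely $1-K>0$ and $1-K\beta>0$, and the check that $t_*$ lies in the admissible interval $(0,K^{-1})$.
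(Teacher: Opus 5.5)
Your proposal is correct and follows the paper's proof essentially step for step: strict convexity with blow-up at both endpoints, solving $F'(t_*)=0$ for the explicit minimizer, and substituting the critical-point identity to get $F(t_*)=K_{\mathrm D}\beta+2\sqrt{\frac{\bar L K_1}{\rho}\beta}$. You are more careful than the paper on two points. You check that $t_*$ lies in $(0,K_{\mathrm D}^{-1})$, and you invoke $\beta<1$ to get $1-K_{\mathrm D}\beta>0$ before squaring. That assumption comes from the paper's standing discount-factor convention, not from the lemma's stated hypothesis $\beta>0$, and it is genuinely needed: if $K_{\mathrm D}\beta>1$, the equivalence in part 3 fails.
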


\begin{proof}
We compute
\[
F'(r)=-\frac{\beta}{r^2}+\frac{\bar L K_1}{\rho(1-K_{\mathrm D}r)^2},
\qquad
F''(r)=\frac{2\beta}{r^3}+\frac{2\frac{\bar L K_1}{\rho}K_{\mathrm D}}{(1-K_{\mathrm D}r)^3}>0
\quad\text{for }r\in(0,K_{\mathrm D}^{-1}).
\]
Thus $F$ is strictly convex on $(0,K_{\mathrm D}^{-1})$.

Moreover,
\(
\lim_{r\downarrow 0}F(r)=+\infty\), and 
\(
\lim_{r\nearrow K_{\mathrm D}^{-1}}F(r)=+\infty,
\)
and thus $F$ attains a unique minimum at some $t_*\in(0,K_{\mathrm D}^{-1})$.
Since $t_*$ is the unique minimizer, it satisfies $F'(t_*)=0$, i.e.
\[
-\frac{\beta}{t_*^2}+\frac{\bar L K_1}{\rho(1-K_{\mathrm D} t_*)^2}=0.
\]
After rearranging, we obtain
\begin{equation}\label{eq:exp}
t_*=\frac{\sqrt\beta}{\sqrt \frac{\bar L K_1}{\rho}+K_{\mathrm D}\sqrt\beta}.
\end{equation}
Notice that \eqref{eq:exp} implies that
\(
t_*>1\) if, and only if, \(
\frac{\bar L K_1}{\rho}<\beta(1-K_{\mathrm D})^2,\)
and that \(
\frac{\bar L K_1}{\rho}>\beta(1-K_{\mathrm D})^2\) if, and only if, \(t_*<1\)
follows immediately.
Using the identity from the critical point equation,
\(
\frac{\sqrt\beta}{t_*}=\frac{\sqrt \frac{\bar L K_1}{\rho}}{1-K_{\mathrm D} t_*},
\)
we get
\[
t_*=\frac{\sqrt\beta}{\sqrt \frac{\bar L K_1}{\rho}+ K_{\mathrm D}\sqrt\beta}.
\]
Therefore,
\[
F(t_*)
=
\frac{\beta}{t_*}+\frac{\frac{\bar L K_1}{\rho} t_*}{1-K_{\mathrm D} t_*}
=
(\sqrt{\frac{\bar L K_1}{\rho}\beta}+K_{\mathrm D}\beta)+\sqrt{\frac{\bar L K_1}{\rho}\beta}
=
K_{\mathrm D}\beta+2\sqrt{\frac{\bar L K_1}{\rho}\beta}.
\]

From the previous step,
\[
F(t_*)<1
\quad\Longleftrightarrow\quad
K_{\mathrm D}\beta+2\sqrt{\frac{\bar L K_1}{\rho}\beta}<1
\quad\Longleftrightarrow\quad
2\sqrt{\frac{\bar L K_1}{\rho}\beta}<1-K_{\mathrm D}\beta.
\]
Therefore,
\[
t_*>1\ \text{and}\ F(t_*)<1
\quad\Longleftrightarrow\quad
\frac{\bar L K_1}{\rho}<\min\!\left\{\beta(1-K_{\mathrm D})^2,\ \frac{(1-K_{\mathrm D}\beta)^2}{4\beta}\right\}.
\]
This completes the proof.
\end{proof}

\begin{proof}[Proof of Theorem \ref{thrm:b}]
    The theorem follows directly from Theorem \ref{thrm:conv} and Lemma \ref{lem:tstar-threshold}.
\end{proof}

\section{One-Shot Terminal Value Invariance for the Mean-Field Terminal Value Problem under Fixed Initial State-Measure}\label{sect:one-shot}

In this section, we prove Theorem \ref{thrm:3}.

\begin{figure}[H]
\begin{tikzcd}
Q_{-T} \arrow[dr] \arrow[dr] & Q_{-T+1} \arrow[dr] \arrow[ddl, bend right=25] &Q_{-T+2} \arrow[dr] \arrow[ddl, bend right=25] & \cdots \arrow[dr] \arrow[ddl, bend right=25] & Q_{-1} \arrow[dr] \arrow[ddl, bend right=25] & Q_{0} \arrow[ddl, bend right=25]\arrow[dd, bend left=35]& \mathrm{Input} \\
\mu_{-T}=\mu_{\mathrm{initial}} \arrow[r] \arrow[d] & \mu^{\pmb Q}_{-T+1} \arrow[r]\arrow[d] & \mu^{\pmb Q}_{-T+2} \arrow[r] \arrow[d] & \cdots \arrow[r] \arrow[d] & \mu^{\pmb Q}_{-1} \arrow[r] \arrow[d] & \mu^{\pmb Q}_0 \arrow[d] &\\
Q^{\mathrm{new}}_{-T} & Q^{\mathrm{new}}_{-T+1} & Q^{\mathrm{new}}_{-T+2} & \cdots & Q^{\mathrm{new}}_{-1} & Q^{\mathrm{new}}_0  &\mathrm{Output} \arrow[uu, bend right=35]
\end{tikzcd}
\caption{Depiction of one-step invariance forced on the terminal $Q$-functions throughout iterations in the finite-horizon case}
\label{fig:2}
\end{figure}

Below, in an algorithm environment, we provide a summary of the iteration process we use for the proof of Theorem \ref{thrm:3}.

\begin{algorithm}[H]\label{alg:1}
\caption{Value Iteration with Terminal Invariance}
    Fix initial state-measure $\mu_0 \in \mathcal P(X)$\;
    Initialize with $\pmb Q^T_0 = (Q_{0,t})_{t=0}^T \in \prod_{t=0}^T \mathcal C$\;
    \While{$\pmb Q^{T}_{n+1} \not = \pmb Q^{T}_n$}{
        \For{$t=0,\cdots,T,T+1$}{
            $\mu_{t+1}= H_2(Q_{n,t},\mu_t)$
        }
         \For{$t=T-1,T-2,\cdots,0$}{
            $Q_{n+1,t} = H_1(Q_{n,t+1},\mu_{t})$\;
        }
        $Q_{n+1,T} = H_1(Q_T,\mu_{T+1})$\;
        $\pmb Q^T_{n+1} = (Q_{n+1,t})_{t=0}^T$\;
    }
    Find an accumulation point $(\pmb \pi^*,(\mu^{*,0},\pmb \mu^*))$ obtained from the disintegration of weak-limit accumulation point of $\pmb \pi^{*,T}\otimes \pmb \mu^{*,T}$ as $T \to \infty$\;
\Return{stationary mean-field equilibrium $(\pmb \pi^*,\pmb \mu^*)$}
\end{algorithm}

Let $\mu_0 \in \mathcal P(X)$ be a fixed initial state-measure. The scheme summarized in Algorithm \ref{alg:1} depicted in Figure \ref{fig:2}, where the iterations satisfy the following vector inequality:
\begin{equation}\label{eq:123455}
\begin{bmatrix}
    \| Q^{\pmb \mu^{\pmb Q}}_0 - Q^{\pmb \mu^{\pmb {\hat Q}}}_0 \|_{\infty} \\
    \| Q^{\pmb \mu^{\pmb Q}}_{-1} - Q^{\pmb \mu^{\pmb {\hat Q}}}_{-1} \|_{\infty} \\
    \| Q^{\pmb \mu^{\pmb Q}}_{-2} - Q^{\pmb \mu^{\pmb {\hat Q}}}_{-2} \|_{\infty} \\
    \vdots \\
    \| Q^{\pmb \mu^{\pmb Q}}_{-T+1} - Q^{\pmb \mu^{\pmb {\hat Q}}}_{-T+1} \|_{\infty} \\
    \| Q^{\pmb \mu^{\pmb Q}}_{-T} - Q^{\pmb \mu^{\pmb {\hat Q}}}_{-T} \|_{\infty}
\end{bmatrix}
\leq
\underbrace{\begin{bmatrix}
    \beta & \frac{\bar L K_1}{\rho} & \frac{\bar L K_{\mathrm D} K_1}{\rho} & \cdots & \frac{\bar L K_{\mathrm D}^{T-2}K_1}{\rho} & \frac{\bar L K_{\mathrm D}^{T-1}K_1}{\rho} \\
    \beta & 0 & \frac{\bar L K_1}{\rho} & \cdots & \frac{\bar L K_{\mathrm D}^{T-3}K_1}{\rho} & \frac{\bar L K_{\mathrm D}^{T-2}K_1}{\rho} \\
    0 & \beta & 0 & \cdots & \frac{\bar L K_{\mathrm D}^{T-4}K_1}{\rho} & \frac{\bar L K_{\mathrm D}^{T-3}K_1}{\rho} \\
    \vdots & \vdots & \vdots & \ddots & \vdots & \vdots \\
    0 & 0 & 0 & \cdots & 0 & \frac{\bar L K_1}{\rho} \\
    0 & 0 & 0 & \cdots & \beta & 0
\end{bmatrix}}_{\mathcal B_T}
\begin{bmatrix}
    \|Q_0 - \hat Q_0\|_{\infty} \\
    \|Q_{-1} - \hat Q_{-1}\|_{\infty} \\
    \|Q_{-2} - \hat Q_{-2}\|_{\infty} \\
    \vdots \\
    \|Q_{-T+1} - \hat Q_{-T+1}\|_{\infty} \\
    \|Q_{-T} - \hat Q_{-T}\|_{\infty}
\end{bmatrix},
\end{equation}
where \(Q_t^{\pmb {\mu^Q}}\) are defined as in Lemma \ref{lem:33333}, with the exception that the terminal \(Q\)-functions vary. The appearance of the term \(\beta\) on the first row and column on \(\mathcal B_T\) is due to the fact that the terminal \(Q\)-function varies.

It turns out that, unlike $\tilde{\mathcal B}_T$, the spectral radius of $\mathcal B_T$ has two different asymptotics separated by a phase-shift condition, see Lemma \ref{lem:BT_variational}. In this section, our main result will be an identication of the exact phase-shift required for $\mathcal B_T$ and the limit of the spectral radius in each case.

\newcommand{\rB}{\rho(\mathcal B_T)}
We will write $\rho(\mathcal B_T):=\rho(\mathcal B_T)$.

\begin{lemma}\label{lem:BT_basic}
Assume $\frac{\bar L K_1}{\rho}>0$, $\beta>0$, and $K_{\mathrm D}\geq0$. Then, the following hold.

\begin{enumerate}
    \item $\mathcal B_T$ is irreducible and has nonnegative entries. Hence, by the Perron--Frobenius Theorem, $\rho(\mathcal B_T)>0$ is an eigenvalue of $\mathcal B_T$.
    \item The sequence $(\rho(\mathcal B_T))_{T\in\mathbb N}$ is strictly increasing.
    \item If $K_{\mathrm D}<1$, then, for every $T\in\mathbb N$,
    \[
    \beta<\rho(\mathcal B_T)
    \le \|\mathcal B_T\|_\infty
    =\beta+\frac{\bar LK_1}{\rho}\sum_{j=0}^{T-1}K_{\mathrm D}^j
    \le \beta+\frac{\bar LK_1}{\rho(1-K_{\mathrm D})}.
    \]
\end{enumerate}
\end{lemma}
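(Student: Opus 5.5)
The plan is to read everything off the explicit sparsity pattern of $\mathcal B_T$ and then apply standard Perron--Frobenius facts. Write $c:=\frac{\bar LK_1}{\rho}>0$. Index rows and columns by $0,1,\dots,T$, matching $\|Q_{-t}-\hat Q_{-t}\|_\infty$, so that $\mathcal B_T$ is $(T+1)\times(T+1)$. From \eqref{eq:123455} the pattern is as follows. Row $0$ is $(\beta,c,cK_{\mathrm D},\dots,cK_{\mathrm D}^{T-1})$. For $1\le i\le T$, row $i$ has $\beta$ in column $i-1$, a zero on the diagonal, and $cK_{\mathrm D}^{\,j-i-1}$ in columns $j>i$. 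We use the convention $K_{\mathrm D}^0=1$, so the superdiagonal entries equal $c>0$ even when $K_{\mathrm D}=0$. Nonnegativity is immediate.

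\emph{Part 1 (irreducibility).} I will exhibit the directed graph of $\mathcal B_T$. Every superdiagonal entry $(i,i+1)$ equals $c>0$, and every subdiagonal entry $(i,i-1)$ equals $\beta>0$. Hence the graph contains the path $0\to1\to\cdots\to T$ together with the reverse path, so it is strongly connected. This gives irreducibility, and Perron--Frobenius then yields $\rho(\mathcal B_T)>0$ as an eigenvalue with a positive eigenvector. (For $T=0$ the matrix is just $[\beta]$, which is trivially irreducible.)

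\emph{Part 2 (strict monotonicity).} The key observation is that $\mathcal B_T$ is exactly the leading principal $(T+1)\times(T+1)$ submatrix of $\mathcal B_{T+1}$. Passing from $T$ to $T+1$ appends the entry $cK_{\mathrm D}^{T}$ to row $0$ and appends one new column and one new row, while leaving all existing entries unchanged; I will check this entrywise from the formula above. Let $\mathcal B'$ denote $\mathcal B_T$ padded with a zero row and a zero column. Then $0\le\mathcal B'\le\mathcal B_{T+1}$ with $\mathcal B'\neq\mathcal B_{T+1}$; indeed the entry $(T+1,T)$ equals $\beta>0$ in $\mathcal B_{T+1}$ but $0$ in $\mathcal B'$. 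Since $\mathcal B_{T+1}$ is irreducible by Part 1, the strict monotonicity clause of Perron--Frobenius gives $\rho(\mathcal B_T)=\rho(\mathcal B')<\rho(\mathcal B_{T+1})$. If I want to avoid quoting that clause, I will argue directly: take the positive left Perron vector $w$ of $\mathcal B_{T+1}$ and a nonnegative right Perron vector $v'$ of $\mathcal B'$. Then $\rho(\mathcal B_{T+1})\langle w,v'\rangle=\langle w,\mathcal B_{T+1}v'\rangle\ge \rho(\mathcal B')\langle w,v'\rangle$. Equality would force $(\mathcal B_{T+1}-\mathcal B')v'=0$, and irreducibility of $\mathcal B_{T+1}$ rules this out. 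This strictness step is the only delicate point in the lemma.

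\emph{Part 3 (bounds).} For the upper bound I use $\rho\le\|\cdot\|_\infty$ and compute the row sums. Row $0$ sums to $\beta+c\sum_{j=0}^{T-1}K_{\mathrm D}^j$. Row $i\ge1$ sums to $\beta+c\sum_{j=0}^{T-1-i}K_{\mathrm D}^j$, which is no larger. Therefore $\|\mathcal B_T\|_\infty$ equals the row-$0$ sum, and when $K_{\mathrm D}<1$ the geometric series gives the final bound $\beta+\frac{c}{1-K_{\mathrm D}}$. For the strict lower bound with $T\ge1$, I compare $\mathcal B_T$ with the matrix $\beta e_0e_0^\top$, whose spectral radius is $\beta$. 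We have $\beta e_0e_0^\top\le\mathcal B_T$ and the two matrices differ, so the same strict-comparison argument as in Part 2 gives $\rho(\mathcal B_T)>\beta$. Alternatively, the sequence is strictly increasing by Part 2 and $\rho(\mathcal B_0)=\beta$, which yields the same conclusion. In writing this up I will note that $T\ge1$ is needed, since for $T=0$ one has equality $\rho(\mathcal B_0)=\beta$.
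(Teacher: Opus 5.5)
Your proposal is correct and follows the paper's proof essentially step for step: strong connectivity of the sub/superdiagonal graph, strict growth of the Perron root when $\mathcal B_T$ sits as a proper leading principal submatrix of the irreducible $\mathcal B_{T+1}$, the row-$0$ sum for $\|\mathcal B_T\|_\infty$, and a strict Perron--Frobenius comparison for $\rho(\mathcal B_T)>\beta$. Your comparison with $\beta e_0e_0^\top\le\mathcal B_T$ justifies the lower bound more cleanly than the paper's appeal to $\beta I$, since $\mathcal B_T\not\ge\beta I$. Your observation that the strict inequality requires $T\ge1$, because $\mathcal B_0=[\beta]$, is a correct refinement of the statement.
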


\begin{proof}
All entries of $\mathcal B_T$ are clearly nonnegative. It is also straightforward to see that $\mathcal B_T$ is irreducible. The Perron--Frobenius Theorem now implies that $\rho(\mathcal B_T)>0$ is an eigenvalue of $\mathcal B_T$.

The matrix $\mathcal B_T$ is a proper leading principal submatrix of $\mathcal B_{T+1}$, and both are nonnegative. Since $\mathcal B_{T+1}$ is irreducible, the Perron root strictly increases when passing from a proper principal submatrix to the full matrix. Hence
\(
\rho(\mathcal B_T)<\rho(\mathcal B_{T+1})\)
for all \(T\).

For the third assertion, suppose that $K_{\mathrm D}<1$.  The upper bound follows from the operator norm estimate
\(
\rho(\mathcal B_T)\le \|\mathcal B_T\|_\infty
=\max_{1\le i\le T+1}\sum_{j=1}^{T+1} (\mathcal B_T)_{ij}.
\)
The first row has the largest row sum, namely
\[
\beta+\frac{\bar LK_1}{\rho}+\frac{\bar LK_1}{\rho}K_{\mathrm D}+\cdots+\frac{\bar LK_1}{\rho}K_{\mathrm D}^{T-1}
=\beta+\frac{\bar LK_1}{\rho}\sum_{j=0}^{T-1}K_{\mathrm D}^j,
\]
which yields
\[
\rho(\mathcal B_T)\le \beta+\frac{\bar LK_1}{\rho}\sum_{j=0}^{T-1}K_{\mathrm D}^j\le \beta+\frac{\bar LK_1}{\rho(1-K_{\mathrm D})}.
\]

For the strict lower bound, note that the vector $e_1=(1,0,\dots,0)^\top$ satisfies
\(
\mathcal B_T e_1=\beta e_1+\beta e_2,
\)
and thus $\mathcal B_T$ strictly dominates the scalar action $\beta I$ on a nonzero direction. Since $\mathcal B_T$ is irreducible and not equal to $\beta I$, the Perron--Frobenius Theorem implies $\rho(\mathcal B_T)>\beta$.
\end{proof}
\begin{lemma}\label{lem:BT_recurrence}
For \(n\ge 1\), consider the characteristic determinant
\(\det(\lambda I_n-\mathcal B_{n-1})\), with the zeroth-order determinant taken to be \(1\). Then,
\(
\det(\lambda I_1-\mathcal B_0)=\lambda-\beta,
\) and
\(
\det(\lambda I_2-\mathcal B_1)=\lambda(\lambda-\beta)-\frac{\bar L K_1}{\rho}\beta,
\)
and for all $n\ge 3$,
\begin{equation}\label{eq:BT_det_recurrence}
\det(\lambda I_n-\mathcal B_{n-1})
=
(\lambda+K_{\mathrm D}\beta)\,\det(\lambda I_{n-1}-\mathcal B_{n-2})
-\beta\left (\frac{\bar L K_1}{\rho}+K_{\mathrm D}\lambda\right )\,\det(\lambda I_{n-2}-\mathcal B_{n-3}).
\end{equation}
\end{lemma}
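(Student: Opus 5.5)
We prove the recurrence by a single determinant-preserving column operation that removes the geometric tail from the last column of \(\lambda I_n-\mathcal B_{n-1}\), followed by a cofactor expansion along that column. Write \(\ell:=\frac{\bar LK_1}{\rho}\), \(K:=K_{\mathrm D}\), and \(D_n(\lambda):=\det(\lambda I_n-\mathcal B_{n-1})\) with \(D_0:=1\).

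The base cases are direct. \(\mathcal B_0=(\beta)\), so \(D_1=\lambda-\beta\). \(\mathcal B_1\) has first row \((\beta,\ell)\) and second row \((\beta,0)\), so \(D_2=(\lambda-\beta)\lambda-\ell\beta\).

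For \(n\ge3\) we first record the entries of \(\lambda I_n-\mathcal B_{n-1}\), read off from \eqref{eq:123455}. Row \(1\) is \((\lambda-\beta,-\ell,-\ell K,\dots,-\ell K^{n-2})\). For \(2\le i\le n\), row \(i\) has \(-\beta\) in column \(i-1\), \(\lambda\) on the diagonal, and \(-\ell K^{j-i-1}\) in column \(j>i\). In particular, for every row \(i\le n-2\), the last two columns satisfy
\[
\text{entry}(i,n)=K\cdot\text{entry}(i,n-1);
\]
for \(i=1\) this reads \(-\ell K^{n-2}=K(-\ell K^{n-3})\), and for \(2\le i\le n-2\) it reads \(-\ell K^{n-i-1}=K(-\ell K^{n-i-2})\).

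We now replace column \(n\) by \(\text{column }n-K\cdot\text{column }(n-1)\), which does not change the determinant. By the relation above, rows \(1,\dots,n-2\) of the new last column vanish. Row \(n-1\) had \(\lambda\) in column \(n-1\) and \(-\ell\) in column \(n\), so it becomes \(-\ell-K\lambda\). Row \(n\) had \(-\beta\) in column \(n-1\) and \(\lambda\) in column \(n\), so it becomes \(\lambda+K\beta\). The first \(n-1\) columns are untouched.

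Expanding along the new last column leaves two cofactors. The \((n,n)\) cofactor is the leading \((n-1)\times(n-1)\) block, which is exactly \(\lambda I_{n-1}-\mathcal B_{n-2}\). This holds because \(\mathcal B_{n-2}\) is the leading principal submatrix of \(\mathcal B_{n-1}\), as noted in Lemma~\ref{lem:BT_basic}. This term contributes \((\lambda+K\beta)D_{n-1}\).

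The \((n-1,n)\) cofactor carries sign \((-1)^{2n-1}=-1\). Its minor \(M\) keeps rows \(1,\dots,n-2,n\) and columns \(1,\dots,n-1\). Within \(M\), the original row \(n\) is \((0,\dots,0,-\beta)\), so we expand \(M\) along it. What remains is the leading \((n-2)\times(n-2)\) block \(\lambda I_{n-2}-\mathcal B_{n-3}\), with sign \(+1\), so \(M=-\beta D_{n-2}\). The contribution is therefore \(-(-\ell-K\lambda)\cdot M=-\beta(\ell+K\lambda)D_{n-2}\).

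Adding the two contributions gives \eqref{eq:BT_det_recurrence}. The only delicate point is the sign bookkeeping in the off-diagonal cofactor, and the case \(n=3\). For \(n=3\), the vanishing step concerns only row \(1\), where \(-\ell K=K(-\ell)\) holds, and \(D_1=\lambda-\beta\) plays the role of \(D_{n-2}\). The argument therefore goes through unchanged.
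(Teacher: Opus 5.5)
Your proof is correct, and it is essentially the paper's argument in localized form: right-multiplying $\lambda I_n-\mathcal B_{n-1}$ by $I_n-K_{\mathrm D}U_n$, as the paper does, performs your operation $\mathrm{col}_j\mapsto\mathrm{col}_j-K_{\mathrm D}\,\mathrm{col}_{j-1}$ on every column at once, producing a tridiagonal matrix to which the continuant recurrence applies. You apply the operation only to the last column and expand twice, using that the leading principal blocks of $\lambda I_n-\mathcal B_{n-1}$ are $\lambda I_k-\mathcal B_{k-1}$; this gives the same recurrence with less machinery, and your signs and the $n=3$ case check out.
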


\begin{proof}
We derive a tridiagonal representation for the characteristic polynomial.

Let $U_n$ denote the $n\times n$ upper-shift matrix (i.e., $(U_n)_{i,i+1}=1$ and all other entries are zero), and also let $L_n:=U_n^\top$ be the lower-shift matrix, and let $E_{11}$ be the matrix with a $1$ in the $(1,1)$ entry and zeros elsewhere. Then, one checks directly from the definition of $\mathcal B_{n-1}$ that
\[
\mathcal B_{n-1}
=
\beta L_n+\beta E_{11}+\frac{\bar L K_1}{\rho}\,U_n(I_n-K_{\mathrm D} U_n)^{-1}.
\]
Indeed, $U_n(I_n-K_{\mathrm D}U_n)^{-1}=U_n+K_{\mathrm D}U_n^2+\cdots+{K_{\mathrm D}}^{n-1}U_n^n$ produces exactly the geometric superdiagonals in $\mathcal B_{n-1}$.

Therefore,
\[
\lambda I_n-\mathcal B_{n-1}
=
\lambda I_n-\beta L_n-\beta E_{11}-\frac{\bar L K_1}{\rho}\,U_n(I_n-K_{\mathrm D}U_n)^{-1}.
\]
Multiplying the right-hand side by \(I_n-K_{\mathrm D} U_n\) and using $\det(I_n-K_{\mathrm D} U_n)=1$ (it is upper triangular with unit diagonal), we obtain
\begin{align*}
\det(\lambda I_n-\mathcal B_{n-1})=\det \underbrace{\Big((\lambda I_n-\beta L_n-\beta E_{11})(I_n-K_{\mathrm D}U_n)-\frac{\bar L K_1}{\rho}U_n\Big)}_{=T_n(\lambda)}.
\end{align*}
A direct computation shows that $T_n(\lambda)$ is tridiagonal, with entries:
\begin{itemize}
    \item diagonal:
    \(
    d_1=\lambda-\beta,\) and \(d_i=\lambda+K_{\mathrm D}\beta\), where \(2\le i\le n,
    \)
    \item subdiagonal (all rows): \(\ell_i=-\beta\) for \(1\le i\le n-1\),
    \item superdiagonal:
    \[
    u_1=-\left(\frac{\bar L K_1}{\rho}+K_{\mathrm D}(\lambda-\beta)\right),
    \qquad
    u_i=-\left(\frac{\bar L K_1}{\rho}+K_{\mathrm D}\lambda\right)\quad (2\le i\le n-1).
    \]
\end{itemize}

Since \(\det(\lambda I_n-\mathcal B_{n-1})=\det T_n(\lambda)\), the standard continuant recurrence for tridiagonal determinants gives
\[
\det(\lambda I_n-\mathcal B_{n-1})
=d_n\det(\lambda I_{n-1}-\mathcal B_{n-2})
-\ell_{n-1}u_{n-1}\det(\lambda I_{n-2}-\mathcal B_{n-3}),
\qquad n\ge 3.
\]
Substituting the values above yields
\[
\det(\lambda I_n-\mathcal B_{n-1})
=
(\lambda+K_{\mathrm D}\beta)\det(\lambda I_{n-1}-\mathcal B_{n-2})
-\beta\left (\frac{\bar L K_1}{\rho}+K_{\mathrm D}\lambda \right)\det(\lambda I_{n-2}-\mathcal B_{n-3}),
\]
which is exactly \eqref{eq:BT_det_recurrence}, where the first- and second-order determinants are defined through the matrices
\[
\mathcal B_0=[\beta],
\qquad
\mathcal B_1=\begin{bmatrix}\beta&\frac{\bar L K_1}{\rho}\\ \beta&0\end{bmatrix}
\]
so that the recursion has a well-defined starting point.
\end{proof}


Lastly, we calculate the asymptotes of \(\rho(\mathcal B_T)\), from which Theorem \ref{thrm:3} follows after one uses \eqref{eq:123455} (see Lemma \ref{lem:aa}).
\begin{theorem}\label{thm:BT_asymptotics}
Equivalently, in the original constants,
\[
\rho(\mathcal B_T)\nearrow
\begin{cases}
\displaystyle
\beta+\frac{\bar L K_1}{\rho(1-K_{\mathrm D})},
& \displaystyle 0\leq K_{\mathrm D}<1
\text{ and }
\frac{\bar L K_1}{\rho}<\beta(1-K_{\mathrm D})^2,\\[3mm]
\displaystyle
\beta K_{\mathrm D}+2\sqrt{\beta\frac{\bar L K_1}{\rho}},
& \displaystyle K_{\mathrm D}\geq1
\text{ or }
\frac{\bar L K_1}{\rho}>\beta(1-K_{\mathrm D})^2.
\end{cases}
\]
At the boundary $0\leq K_{\mathrm D}<1$ and
$\frac{\bar L K_1}{\rho}=\beta(1-K_{\mathrm D})^2$, the two displayed values agree.
\end{theorem}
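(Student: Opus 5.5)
The plan is to read the asymptotics off the three-term recurrence of Lemma~\ref{lem:BT_recurrence}, and to cross-check them with Collatz--Wielandt bounds as in Lemma~\ref{lem:BT_variational}. Write $\ell:=\frac{\bar LK_1}{\rho}$ and $D_n(\lambda):=\det(\lambda I_n-\mathcal B_{n-1})$. For fixed $\lambda$, \eqref{eq:BT_det_recurrence} is a constant-coefficient linear recurrence with characteristic equation
\[
z^2-(\lambda+K_{\mathrm D}\beta)z+\beta(\ell+K_{\mathrm D}\lambda)=0 .
\]
Its discriminant simplifies to $(\lambda-K_{\mathrm D}\beta)^2-4\beta\ell$. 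The discriminant therefore vanishes exactly at the branch point $\lambda_c:=\beta K_{\mathrm D}+2\sqrt{\beta\ell}$. The second value in the statement is precisely $\lambda_c$, and by Lemma~\ref{lem:tstar-threshold} it also equals $\min F$. By Lemma~\ref{lem:BT_basic}, $\rho(\mathcal B_T)$ is the largest real zero of $D_{T+1}$ and is strictly increasing, so it suffices to identify its limit.

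\textbf{Step 1: the regime $\lambda<\lambda_c$.} Here the roots $z_{1,2}=re^{\pm i\theta}$ are complex conjugate with $\theta\in(0,\pi)$. Hence $D_n(\lambda)=r^n\,\mathrm{Re}(c\,e^{in\theta})$ for some $c=c(\lambda)$ fixed by $D_0=1$ and $D_1=\lambda-\beta$. As $n$ grows, $D_n$ changes sign on any subinterval $(\lambda_c-\delta,\lambda_c)$, because $\theta$ varies continuously there and $n\theta$ sweeps through multiples of $\pi$. This gives $\liminf_T\rho(\mathcal B_T)\ge\lambda_c$ in all cases.

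\textbf{Step 2: the regime $\lambda>\lambda_c$.} Here the roots are real with $z_1>z_2>0$. Solving for the coefficients gives
\[
D_n=A z_1^n+Bz_2^n,\qquad A(\lambda)=\frac{D_1-z_2D_0}{z_1-z_2}=\frac{\lambda-\beta-z_2(\lambda)}{z_1-z_2}.
\]
The key dichotomy is whether $A$ has a zero in $(\lambda_c,\infty)$.
\begin{itemize}
\item If $A>0$ on the whole half-line, then $D_n>0$ there for all large $n$, uniformly on compacts. Monotonicity then pins the limit at $\lambda_c$, which is the second case of the theorem.
\item If $A(\lambda_0)=0$ for some $\lambda_0>\lambda_c$, then $A$ changes sign at $\lambda_0$. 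The dominant term $Az_1^n$ produces a zero of $D_n$ converging to $\lambda_0$, and no zeros occur beyond it. So the limit is $\lambda_0$.
\end{itemize}
The condition $A(\lambda_0)=0$ means that $\lambda_0-\beta$ is a root of the quadratic with the correct branch selected. Substituting gives a linear equation in $\lambda_0$, which I expect to reduce to $\lambda_0=\beta+\frac{\ell}{1-K_{\mathrm D}}$. The matching with the first-row structure of $\mathcal B_T$ (equivalently, $D_1=\lambda-\beta$) should be rechecked carefully, possibly with the index shift in \eqref{eq:BT_det_recurrence}. I would then verify that this root lies above $\lambda_c$ and on the admissible branch exactly when $K_{\mathrm D}<1$ and $\ell<\beta(1-K_{\mathrm D})^2$. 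This is an algebraic inequality: with $s=\sqrt{\ell/\beta}$ it should reduce to $(1-K_{\mathrm D}-s)^2>0$ together with $s<1-K_{\mathrm D}$.

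\textbf{Cross-check and boundary.} As an independent check, I would use geometric test vectors $x=(1,r,r^2,\dots)$ exactly as in Lemma~\ref{lem:BT_variational}. In the first regime the Perron vector decays like $r=1/z$ with the first row contributing $\beta+\ell\sum_j K_{\mathrm D}^j$, consistent with the row-sum bound of Lemma~\ref{lem:BT_basic}(3). This yields the upper bound $\rho(\mathcal B_T)\le\beta+\frac{\ell}{1-K_{\mathrm D}}$ directly, and the Collatz--Wielandt bound yields $\rho(\mathcal B_T)\le\min F=\lambda_c$ whenever the geometric vector is admissible. At the boundary $\ell=\beta(1-K_{\mathrm D})^2$, setting $s=1-K_{\mathrm D}$ gives
\[
\beta K_{\mathrm D}+2\beta s=\beta+\beta s\cdot\frac{s}{1-K_{\mathrm D}},
\]
so the two displayed values agree, which confirms the phase transition.

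\textbf{Main obstacle.} The hardest step is Step 2: correctly identifying which root $z_2$ or $z_1$ enters $A$ via the initial data, and proving that the sign change of $A$ really transfers to a zero of $D_n$ converging to $\lambda_0$. The latter needs uniform control of $Bz_2^n/z_1^n$ near $\lambda_0$. I would attack it with Rouché-type or intermediate-value arguments on $D_n/z_1^n$.
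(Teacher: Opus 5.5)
Your route is the paper's: the three-term recurrence of Lemma~\ref{lem:BT_recurrence}, the branch point $\lambda_c$ of the discriminant, oscillation of $D_n$ below $\lambda_c$ for the lower bound, and the sign of the dominant coefficient above $\lambda_c$. Step~1 is fine, since it only needs the coefficient to be nonzero. Step~2, however, fails as written.

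The recurrence \eqref{eq:BT_det_recurrence} holds only for $n\ge 3$. In the tridiagonal matrix $T_n(\lambda)$ the first superdiagonal entry is $-(\ell+K_{\mathrm D}(\lambda-\beta))$, not $-(\ell+K_{\mathrm D}\lambda)$. So the constant-coefficient recurrence cannot be started from $D_0=1$: doing so gives $D_2=\lambda(\lambda-\beta)-\beta\ell-K_{\mathrm D}\beta^2$, which is wrong whenever $K_{\mathrm D}>0$.

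With your $A=(\lambda-\beta-z_2)/(z_1-z_2)$, substituting $z_2=\lambda-\beta$ into the characteristic equation gives $\lambda_0=(1+K_{\mathrm D})\beta+\ell$, not $\beta+\ell/(1-K_{\mathrm D})$. In the first regime this exceeds the row-sum bound of Lemma~\ref{lem:BT_basic} by $K_{\mathrm D}\bigl(\beta-\ell/(1-K_{\mathrm D})\bigr)>0$, so your own cross-check would flag it.

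The fix is to take $D_1=\lambda-\beta$ and $D_2=\lambda(\lambda-\beta)-\beta\ell$ as initial data. Then $D_n=Az_1^{n-1}+Bz_2^{n-1}$ with $A=(D_2-z_2D_1)/(z_1-z_2)$, and $A=0$ iff $z_2=\lambda-\beta\ell/(\lambda-\beta)$. Substituting this into the characteristic equation gives exactly $(1-K_{\mathrm D})(\lambda-\beta)=\ell$, i.e.\ $\lambda_0=\lambda_{\mathrm{bd}}:=\beta+\ell/(1-K_{\mathrm D})$. At $\lambda_{\mathrm{bd}}$ the roots are $\beta$ and $K_{\mathrm D}\beta+\ell/(1-K_{\mathrm D})$, and the latter is the smaller root $z_2$ exactly when $\ell<\beta(1-K_{\mathrm D})^2$, as you predicted.

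Two further points.

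First, ``$A$ changes sign at $\lambda_0$'' does not follow from uniqueness of the zero together with $A>0$ at infinity; you need a sign on the left. For example, as $\lambda\downarrow\lambda_c$ the numerator $(\lambda-\beta)(\lambda-z_2)-\beta\ell$ tends to $\sqrt{\beta\ell}\,(K_{\mathrm D}\beta+\sqrt{\beta\ell}-\beta)$. This is negative precisely in the first regime, so $A<0$ on $(\lambda_c,\lambda_{\mathrm{bd}})$. The paper's proof also skips this check.

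Second, the claim that no zeros of $D_n$ lie beyond $\lambda_0$ should not rest on dominance of $Az_1^{n-1}$, which is not uniform where $A$ is small. Use the row-sum bound, as the paper does.

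In fact your Collatz--Wielandt idea is more than a cross-check. Take $x=(1,r,\dots,r^T)$ with $rK_{\mathrm D}<1$:
\begin{itemize}
\item rows $2,\dots,T+1$ of $\mathcal B_T$ give ratios at most $F(r)$;
\item row~1 gives at most $\beta+\ell r/(1-K_{\mathrm D}r)$, which is $\le F(r)$ iff $r\le 1$.
\end{itemize}
Hence $\rho(\mathcal B_T)\le\min\{F(r):0<r\le 1,\ rK_{\mathrm D}<1\}$ for every $T$. This minimum equals $F(t_*)=\lambda_c$ when $t_*\le 1$, which is the second regime and includes every $K_{\mathrm D}\ge 1$. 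Otherwise it equals $F(1)=\lambda_{\mathrm{bd}}$.

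This gives both upper bounds non-asymptotically. It is more direct than the paper's argument, which infers $\rho(\mathcal B_{n-1})<\lambda$ from positivity of $D_n$ at a single $\lambda$.
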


\begin{proof}
Let \(n=T+1\). By Lemma~\ref{lem:BT_recurrence}, the characteristic determinant
\(\det(\lambda I_n-\mathcal B_{n-1})\)
satisfies the following second-order recurrence for any \(n \ge 3\):
\begin{equation}\label{eq:poly1}
\det(\lambda I_n-\mathcal B_{n-1})
=(\lambda+K_{\mathrm D}\beta)\det(\lambda I_{n-1}-\mathcal B_{n-2})
-\beta\left(\frac{\bar L K_1}{\rho}+K_{\mathrm D}\lambda\right)
\det(\lambda I_{n-2}-\mathcal B_{n-3}).
\end{equation}
For fixed \(\lambda\), the associated characteristic equation to the recursion \eqref{eq:poly1} is
\begin{equation}\label{eq:BT_char_eq}
r^2-(\lambda+K_{\mathrm D}\beta)r+\beta(\frac{\bar L K_1}{\rho}+K_{\mathrm D}\lambda)=0.
\end{equation}
The discriminant that corresponds to \eqref{eq:BT_char_eq} is
\[
\Delta(\lambda):=(\lambda+K_{\mathrm D}\beta)^2-4\beta(\frac{\bar L K_1}{\rho}+K_{\mathrm D}\lambda)
=(\lambda-K_{\mathrm D}\beta)^2-4\frac{\bar L K_1}{\rho}\beta;
\]
and thus,
\(
\Delta(\lambda)=0
\) if, and only if, \(
\lambda=K_{\mathrm D}\beta\pm 2\sqrt{\frac{\bar L K_1}{\rho}\beta}.
\)
Since \(\lambda>0\) is the relevant regime for the Perron root, the critical value is
\(
\lambda_{\mathrm{ac}}=K_{\mathrm D}\beta+2\sqrt{\frac{\bar L K_1}{\rho}\beta}.
\)

Fix \(\lambda>\lambda_{\mathrm{ac}}\), so that \(\Delta(\lambda)>0\). By
\[
r_\pm(\lambda)
:=
\frac{\lambda+K_{\mathrm D}\beta\pm \sqrt{\Delta(\lambda)}}{2},
\qquad
r_+(\lambda)>r_-(\lambda)>0,
\]
denote the corresponding roots.
Then, for \(n \ge 1\), the recurrence \eqref{eq:BT_det_recurrence} yields
\begin{equation}\label{eq:BT_pn_realroots}
\det(\lambda I_n-\mathcal B_{n-1})
=c_+(\lambda)\,r_+(\lambda)^{n-1}+c_-(\lambda)\,r_-(\lambda)^{n-1},
\end{equation}
where the coefficients are determined by \(\det(\lambda I_1-\mathcal B_0)=\lambda-\beta\) and \(\det(\lambda I_2-\mathcal B_1)=\lambda(\lambda-\beta)-\frac{\bar L K_1}{\rho}\beta\), namely
\begin{equation}\label{eq:BT_cpm}
c_+(\lambda)=\frac{\lambda(\lambda-\beta)-\frac{\bar L K_1}{\rho}\beta-r_-(\lambda)(\lambda-\beta)}{r_+(\lambda)-r_-(\lambda)},
\qquad
c_-(\lambda)=\frac{r_+(\lambda)(\lambda-\beta)-\lambda(\lambda-\beta)+\frac{\bar L K_1}{\rho}\beta}{r_+(\lambda)-r_-(\lambda)}.
\end{equation}

We first treat the case $K_{\mathrm D}\geq1$, which is not covered by the
geometric-series estimate in Lemma~\ref{lem:BT_basic}.  We claim that in this
case
\begin{equation}\label{eq:BT-large-K-limit}
\rho(\mathcal B_T)\nearrow
\lambda_{\mathrm{ac}}
:=K_{\mathrm D}\beta+2\sqrt{\frac{\bar L K_1}{\rho}\beta}.
\end{equation}
For $\lambda>\lambda_{\mathrm{ac}}$, the coefficient $c_+(\lambda)$ in
\eqref{eq:BT_pn_realroots} can vanish only if
\begin{equation}\label{eq:BT-boundary-zero}
(1-K_{\mathrm D})(\lambda-\beta)=\frac{\bar L K_1}{\rho}.
\end{equation}
Indeed, this follows by substituting the expression for $r_-(\lambda)$ forced
by $c_+(\lambda)=0$ into the characteristic equation
\eqref{eq:BT_char_eq}.  If $K_{\mathrm D}=1$, equation
\eqref{eq:BT-boundary-zero} does not hold because
$\frac{\bar L K_1}{\rho}>0$.  If $K_{\mathrm D}>1$, its only solution is
\[
\lambda=\beta-\frac{\bar L K_1}{\rho(K_{\mathrm D}-1)}
<\beta<\lambda_{\mathrm{ac}}.
\]
Consequently, $c_+$ has no zero on
$(\lambda_{\mathrm{ac}},\infty)$.  Since \eqref{eq:BT_cpm} gives
$c_+(\lambda)>0$ for all sufficiently large $\lambda$, continuity yields
$c_+(\lambda)>0$ throughout $(\lambda_{\mathrm{ac}},\infty)$.  Thus, for
every fixed $\lambda>\lambda_{\mathrm{ac}}$, the dominant term in
\eqref{eq:BT_pn_realroots} is positive for all sufficiently large $n$.
In fact, for any fixed $\lambda_0>\lambda_{\mathrm{ac}}$, this dominance is
uniform for $\lambda\geq\lambda_0$: on compact $\lambda$-intervals this
follows from continuity and $r_-/r_+<1$, while for large $\lambda$ it follows
directly from \eqref{eq:BT_cpm} and $r_-/r_+\to0$.  Hence, for all sufficiently
large $n$, the characteristic polynomial is positive on
$[\lambda_0,\infty)$ and has no real root there.  In particular, its Perron
root is smaller than $\lambda_0$.  Therefore $\limsup_{T\to\infty}\rho(\mathcal B_T)\leq\lambda_{\mathrm{ac}}.$

For the reverse inequality, fix $\max\left\{0,K_{\mathrm D}\beta-2\sqrt{\frac{\bar L K_1}{\rho}\beta}\right\} <\lambda<\lambda_{\mathrm{ac}}. $
Then $\Delta(\lambda)<0$, and the two roots of
\eqref{eq:BT_char_eq} are complex conjugates.  Hence
\[
\det(\lambda I_n-\mathcal B_{n-1})
=m(\lambda)^{n-1}
\left(A(\lambda)\cos((n-1)\theta(\lambda))
+B(\lambda)\sin((n-1)\theta(\lambda))\right),
\]
where $m(\lambda)>0$, $\theta(\lambda)\in(0,\pi)$, and
$(A(\lambda),B(\lambda))\neq(0,0)$.  The trigonometric factor is negative
for infinitely many $n$.  For those $n$, the characteristic polynomial has
a real root larger than $\lambda$, and therefore
$\rho(\mathcal B_{n-1})>\lambda$.  Since the Perron roots increase with $T$
by Lemma~\ref{lem:BT_basic}, letting
$\lambda\nearrow\lambda_{\mathrm{ac}}$ gives
\[
\liminf_{T\to\infty}\rho(\mathcal B_T)\geq\lambda_{\mathrm{ac}}.
\]
This proves \eqref{eq:BT-large-K-limit}.  For the remainder of the proof, we
may therefore suppose that $0\leq K_{\mathrm D}<1$.

Define
\(
\lambda_{\mathrm{bd}}:=\beta+\frac{\bar L K_1}{\rho(1-K_{\mathrm D})}.
\)
We claim that \(\lambda_{\mathrm{ac}}\le \lambda_{\mathrm{bd}}\). Indeed,
\begin{align*}
\lambda_{\mathrm{bd}}-\lambda_{\mathrm{ac}}
=
\beta+\frac{\bar L K_1}{\rho(1-K_{\mathrm D})}-K_{\mathrm D}\beta-2\sqrt{\frac{\bar L K_1}{\rho}\beta}
&=
\beta(1-K_{\mathrm D})+\frac{\bar L K_1}{\rho(1-K_{\mathrm D})}-2\sqrt{\frac{\bar L K_1}{\rho}\beta}\\
&=
\left(\sqrt{\beta(1-K_{\mathrm D})}-\sqrt{\frac{\bar L K_1}{\rho(1-K_{\mathrm D})}}\right)^2
\ge 0.
\end{align*}
Clearly, equality holds if, and only if, \(\frac{\bar L K_1}{\rho}=\beta(1-K_{\mathrm D})^2\).

Notice that
\[
\frac{\lambda_{\mathrm{bd}}(\lambda_{\mathrm{bd}}-\beta)-\frac{\bar L K_1}{\rho}\beta}
{\lambda_{\mathrm{bd}}-\beta}
=
\lambda_{\mathrm{bd}}-\frac{\frac{\bar L K_1}{\rho}\beta}{\lambda_{\mathrm{bd}}-\beta}
=
\lambda_{\mathrm{bd}}-\beta(1-K_{\mathrm D})
=K_{\mathrm D}\beta+\frac{\bar L K_1}{\rho(1-K_{\mathrm D})}.
\]
Moreover, substituting \(\lambda=\lambda_{\mathrm{bd}}\) into \eqref{eq:BT_char_eq},
one checks directly that the two roots are precisely \(\beta\) and \(K_{\mathrm D}\beta+\frac{\bar L K_1}{\rho(1-K_{\mathrm D})}\):
\[
r^2-(\lambda_{\mathrm{bd}}+K_{\mathrm D}\beta)r+\beta(\frac{\bar L K_1}{\rho}+K_{\mathrm D}\lambda_{\mathrm{bd}})
=(r-\beta)\left(r-K_{\mathrm D}\beta-\frac{\bar L K_1}{\rho(1-K_{\mathrm D})}\right).
\]
Hence
\(
\{r_+(\lambda_{\mathrm{bd}}),\,r_-(\lambda_{\mathrm{bd}})\}
=\left\{\beta,K_{\mathrm D}\beta+\frac{\bar L K_1}{\rho(1-K_{\mathrm D})}\right\}.
\)

Now note:
\[
K_{\mathrm D}\beta+\frac{\bar L K_1}{\rho(1-K_{\mathrm D})}>\beta
\iff
K_{\mathrm D}\beta+\frac{\bar L K_1}{\rho(1-K_{\mathrm D})}>\beta
\iff
\frac{\bar L K_1}{\rho}>\beta(1-K_{\mathrm D})^2.
\]
Thus we have obtained that:
\begin{enumerate}
    \item if \(\frac{\bar L K_1}{\rho}<\beta(1-K_{\mathrm D})^2\), then \(K_{\mathrm D}\beta+\frac{\bar L K_1}{\rho(1-K_{\mathrm D})}<\beta\), so \(r_+(\lambda_{\mathrm{bd}})=\beta\) and \(r_-(\lambda_{\mathrm{bd}})=K_{\mathrm D}\beta+\frac{\bar L K_1}{\rho(1-K_{\mathrm D})}\);
    \item if \(\frac{\bar L K_1}{\rho}>\beta(1-K_{\mathrm D})^2\), then \(K_{\mathrm D}\beta+\frac{\bar L K_1}{\rho(1-K_{\mathrm D})}>\beta\), so \(r_+(\lambda_{\mathrm{bd}})=K_{\mathrm D}\beta+\frac{\bar L K_1}{\rho(1-K_{\mathrm D})}\) and \(r_-(\lambda_{\mathrm{bd}})=\beta\).
\end{enumerate}

Assume that \(\frac{\bar L K_1}{\rho}<\beta(1-K_{\mathrm D})^2\). Then \(K_{\mathrm D}\beta+\frac{\bar L K_1}{\rho(1-K_{\mathrm D})}<\beta\), so at \(\lambda=\lambda_{\mathrm{bd}}\) we have
\[
r_+(\lambda_{\mathrm{bd}})=\beta,\qquad r_-(\lambda_{\mathrm{bd}})=K_{\mathrm D}\beta+\frac{\bar L K_1}{\rho(1-K_{\mathrm D})},
\qquad
\frac{\lambda_{\mathrm{bd}}(\lambda_{\mathrm{bd}}-\beta)-\frac{\bar L K_1}{\rho}\beta}
{\lambda_{\mathrm{bd}}-\beta}
=K_{\mathrm D}\beta+\frac{\bar L K_1}{\rho(1-K_{\mathrm D})}
=r_-(\lambda_{\mathrm{bd}}).
\]
Therefore, by \eqref{eq:BT_cpm},
\( c_+(\lambda_{\mathrm{bd}})=0. \)
Also, \(c_+\) is continuous on \((\lambda_{\mathrm{ac}},\infty)\), and if \(c_+(\lambda)=0\) for some \(\lambda>\lambda_{\mathrm{ac}}\), then
\( \lambda(\lambda-\beta)-\frac{\bar L K_1}{\rho}\beta=r_-(\lambda)(\lambda-\beta).\)
Since \(r_-(\lambda)\) satisfies \eqref{eq:BT_char_eq}, substituting
\[
r_-(\lambda)
=\frac{\lambda(\lambda-\beta)-\frac{\bar L K_1}{\rho}\beta}{\lambda-\beta}
\]
into \eqref{eq:BT_char_eq} forces
\(\lambda=\lambda_{\mathrm{bd}}.\)
Hence \(c_+\) has a unique zero on \((\lambda_{\mathrm{ac}},\infty)\), namely \(\lambda_{\mathrm{bd}}\). Since \(c_+(\lambda)>0\) for all sufficiently large \(\lambda\) (this is immediate from \eqref{eq:BT_cpm}, because \(\lambda(\lambda-\beta)-\frac{\bar L K_1}{\rho}\beta\sim\lambda^2\), \(\lambda-\beta\sim\lambda\), and \(r_-(\lambda)=O(1)\)), it follows that
\(
c_+(\lambda)<0\)
for all \(\lambda\in(\lambda_{\mathrm{ac}},\lambda_{\mathrm{bd}}).\)

Fix any \(\lambda\in(\lambda_{\mathrm{ac}},\lambda_{\mathrm{bd}})\). Since \(r_+(\lambda)>r_-(\lambda)>0\), the first term in \eqref{eq:BT_pn_realroots} dominates for large \(n\). Because \(c_+(\lambda)<0\), we obtain
\(
\det(\lambda I_n-\mathcal B_{n-1})<0
\) for all sufficiently large $n$.
Since the largest real root of \(\det(\lambda I_n-\mathcal B_{n-1})\) is \(\rho(\mathcal B_{n-1})\), the inequality \(\det(\lambda I_n-\mathcal B_{n-1})<0\) implies
\(
\rho(\mathcal B_{n-1})>\lambda\)
 for all sufficiently large $n$.
Equivalently,
\(
\liminf_{T\to\infty}\rho(\mathcal B_T)\ge \lambda.
\)
Letting \(\lambda\nearrow\lambda_{\mathrm{bd}}\), we get
\(
\liminf_{T\to\infty}\rho(\mathcal B_T)\ge \lambda_{\mathrm{bd}}.
\)

On the other hand, Lemma~\ref{lem:BT_basic} gives
\[
\rho(\mathcal B_T)\le \beta+\frac{\bar LK_1}{\rho}\sum_{j=0}^{T-1}K_{\mathrm D}^j
\nearrow \beta+\frac{\bar L K_1}{\rho(1-K_{\mathrm D})}
=\lambda_{\mathrm{bd}}.
\]
Therefore
\(
\limsup_{T\to\infty}\rho(\mathcal B_T)\le \lambda_{\mathrm{bd}}.
\)
Combining the two bounds yields
\(
\rho(\mathcal B_T)\nearrow \lambda_{\mathrm{bd}}.
\)

\medskip
\noindent
\textbf{Asymptotics in the regime \(\frac{\bar L K_1}{\rho}>\beta(1- K_{\mathrm D})^2\).}
In this case we have \(K_{\mathrm D}\beta+\frac{\bar L K_1}{\rho(1-K_{\mathrm D})}>\beta\), so at \(\lambda=\lambda_{\mathrm{bd}}\),
\[
r_+(\lambda_{\mathrm{bd}})=K_{\mathrm D}\beta+\frac{\bar L K_1}{\rho(1-K_{\mathrm D})},\qquad r_-(\lambda_{\mathrm{bd}})=\beta,
\qquad
\frac{\lambda_{\mathrm{bd}}(\lambda_{\mathrm{bd}}-\beta)-\frac{\bar L K_1}{\rho}\beta}
{\lambda_{\mathrm{bd}}-\beta}
=K_{\mathrm D}\beta+\frac{\bar L K_1}{\rho(1-K_{\mathrm D})}
=r_+(\lambda_{\mathrm{bd}}).
\]
Hence, by \eqref{eq:BT_cpm},
$
c_-(\lambda_{\mathrm{bd}})=0,$
$
c_+(\lambda_{\mathrm{bd}})\neq 0.
$
As before, $c_+$ is continuous on \((\lambda_{\mathrm{ac}},\infty)\), and \(c_+(\lambda)=0\) would force \(\lambda=\lambda_{\mathrm{bd}}\), which is impossible here because at \(\lambda_{\mathrm{bd}}\) the cancellation occurs in \(c_-\), not in \(c_+\). Hence
\(
c_+(\lambda)\neq 0 \)
for all \(\lambda>\lambda_{\mathrm{ac}}.\)
Since \(c_+(\lambda)>0\) for all sufficiently large \(\lambda\), continuity implies
\(
c_+(\lambda)>0\) for all \(\lambda>\lambda_{\mathrm{ac}}.\)

Fix \(\lambda>\lambda_{\mathrm{ac}}\). Then \(\Delta(\lambda)>0\), thus \eqref{eq:BT_pn_realroots} applies and the dominant term has positive coefficient \(c_+(\lambda)>0\). Therefore
\(
\det(\lambda I_n-\mathcal B_{n-1})>0\)
for all sufficiently large \(n\).
This implies
\(
\rho(\mathcal B_{n-1})<\lambda\)
for all sufficiently large \(n,\) and hence
\(
\limsup_{T\to\infty}\rho(\mathcal B_T)\le \lambda.
\)
Letting \(\lambda\downarrow\lambda_{\mathrm{ac}}\), we conclude that
\(
\limsup_{T\to\infty}\rho(\mathcal B_T)\le \lambda_{\mathrm{ac}}.
\)

It remains to prove the matching lower bound. Fix any \(\lambda<\lambda_{\mathrm{ac}}\). Then, \(\Delta(\lambda)<0\), and thus the two roots of \eqref{eq:BT_char_eq} are complex conjugates:
\(
r_\pm(\lambda)=m(\lambda)e^{\pm i\theta(\lambda)},\) where \(
m(\lambda)>0,\) and \(\theta(\lambda)\in(0,\pi).\)
Therefore, the real sequence \(\det(\lambda I_n-\mathcal B_{n-1})\) admits the representation
\begin{equation}\label{eq:BT_trig_form}
\det(\lambda I_n-\mathcal B_{n-1})
=
m(\lambda)^{n-1}
\Big(A(\lambda)\cos((n-1)\theta(\lambda))
+B(\lambda)\sin((n-1)\theta(\lambda))\Big),
\end{equation}
for suitable real coefficients \(A(\lambda),B(\lambda)\) determined by \(\lambda-\beta\) and \(\lambda(\lambda-\beta)-\frac{\bar L K_1}{\rho}\beta\).
The coefficients pair \((A(\lambda),B(\lambda))\) cannot be \((0,0)\), otherwise \(\det(\lambda I_n-\mathcal B_{n-1})\equiv 0\) for all \(n\), which is impossible for a fixed \(\lambda\).

Hence the trigonometric factor in \eqref{eq:BT_trig_form} is a nontrivial sinusoid sampled along an arithmetic progression, and therefore takes negative values infinitely often. Consequently,
\(
\det(\lambda I_n-\mathcal B_{n-1})<0\)
for infinitely many \(n\).
For those \(n\), we have
$\rho(\mathcal B_{n-1})>\lambda.$
Since the sequence \((\rho(\mathcal B_T))_T\) is increasing (Lemma~\ref{lem:BT_basic}), it follows that in fact
$
\liminf_{T\to\infty}\rho(\mathcal B_T)\ge \lambda.
$
Letting \(\lambda\nearrow\lambda_{\mathrm{ac}}\), we obtain
$
\liminf_{T\to\infty}\rho(\mathcal B_T)\ge \lambda_{\mathrm{ac}}.
$
Combining with the upper bound gives
$
\rho(\mathcal B_T)\nearrow \lambda_{\mathrm{ac}}.
$

\medskip
\noindent
\textbf{The critical case \(\frac{\bar L K_1}{\rho}=\beta(1-K_{\mathrm D})^2\).}
In this case, it holds that
\(
\lambda_{\mathrm{ac}}=\lambda_{\mathrm{bd}},
\)
and the conclusion follows by continuity from either side of the phase threshold. This completes the proof.
\end{proof}

\section{Discounted Terminal Value Invariance under Minorization}\label{sect:average-cost-contraction}

\begin{proof}[Proof of Theorem~\ref{thrm:avg}]
Fix $T\in\mathbb N$ and $\mu_{-T}\in\mathcal P(X)$. For
$\pmb Q=(Q_{-T},\ldots,Q_0)\in\mathcal C_{\alpha\beta}^{T+1}$, define
$\mu^{\pmb Q}_{-t+1}=H_2(Q_{-t},\mu^{\pmb Q}_{-t}),$
for $t=T,\ldots,1,$
with $\mu^{\pmb Q}_{-T}=\mu_{-T}$, and set
\begin{align*}
Q^{\pmb\mu^{\pmb Q}}_{-t}
&=H_1^{\mathrm{avg}}(Q_{-t+1},\mu^{\pmb Q}_{-t}),
&&t=T,\ldots,1,
\\
Q^{\pmb\mu^{\pmb Q}}_0
&=H_1^{\mathrm{avg}}(Q_0,\mu^{\pmb Q}_0).
\end{align*}
Let $\pmb Q,\widehat{\pmb Q}\in\mathcal C_{\alpha\beta}^{T+1}$. Since
$\mu^{\pmb Q}_{-T}=\mu^{\widehat{\pmb Q}}_{-T}$, repeated application of
\eqref{eq:quaso1} gives, for $t=T-1,\ldots,0$,
\begin{align}
\|\mu^{\pmb Q}_{-t}-\mu^{\widehat{\pmb Q}}_{-t}\|_1
&\leq
\frac{K_1}{\rho}W
\sum_{j=t+1}^{T}K_{\mathrm D}^{j-t-1}
\|Q_{-j}-\widehat Q_{-j}\|_\omega.
\label{eq:average-cost-forward-proof}
\end{align}
Hence, by \eqref{eq:quaso2} and \eqref{eq:average-cost-forward-proof},
\begin{align}
\|Q^{\pmb\mu^{\pmb Q}}_0
-Q^{\pmb\mu^{\widehat{\pmb Q}}}_0\|_\omega
&\leq
\alpha\beta\|Q_0-\widehat Q_0\|_\omega
+\frac{\bar L K_1}{\rho}W
\sum_{j=1}^{T}K_{\mathrm D}^{j-1}
\|Q_{-j}-\widehat Q_{-j}\|_\omega,
\label{eq:average-cost-terminal-row}
\\
\|Q^{\pmb\mu^{\pmb Q}}_{-t}
-Q^{\pmb\mu^{\widehat{\pmb Q}}}_{-t}\|_\omega
&\leq
\alpha\beta\|Q_{-t+1}-\widehat Q_{-t+1}\|_\omega
+\frac{\bar L K_1}{\rho}W
\sum_{j=t+1}^{T}K_{\mathrm D}^{j-t-1}
\|Q_{-j}-\widehat Q_{-j}\|_\omega
\label{eq:average-cost-interior-row}
\end{align}
for $t=1,\ldots,T$. Thus, after repeating Lemma \ref{lem:33333} in this setting, we obtain
\begin{equation}\label{eq:average-cost-matrix-proof}
\begin{bmatrix}
\|Q^{\pmb\mu^{\pmb Q}}_0-Q^{\pmb\mu^{\widehat{\pmb Q}}}_0\|_\omega\\
\|Q^{\pmb\mu^{\pmb Q}}_{-1}-Q^{\pmb\mu^{\widehat{\pmb Q}}}_{-1}\|_\omega\\
\vdots\\
\|Q^{\pmb\mu^{\pmb Q}}_{-T}-Q^{\pmb\mu^{\widehat{\pmb Q}}}_{-T}\|_\omega
\end{bmatrix}
\leq
\begin{bmatrix}
\alpha\beta & \frac{\bar L K_1}{\rho}W
& \frac{\bar LK_{\mathrm D} K_1}{\rho}W
& \cdots
& \frac{\bar LK_{\mathrm D}^{T-1}K_1}{\rho}W
\\
\alpha\beta & 0
& \frac{\bar L K_1}{\rho}W
& \cdots
& \frac{\bar LK_{\mathrm D}^{T-2}K_1}{\rho}W
\\
0&\alpha\beta&0&\cdots&\frac{\bar LK_{\mathrm D}^{T-3}K_1}{\rho}W
\\
\vdots&\vdots&\vdots&\ddots&\vdots
\\
0&0&0&\alpha\beta&0
\end{bmatrix}
\begin{bmatrix}
\|Q_0-\widehat Q_0\|_\omega\\
\|Q_{-1}-\widehat Q_{-1}\|_\omega\\
\vdots\\
\|Q_{-T}-\widehat Q_{-T}\|_\omega
\end{bmatrix}.
\end{equation}

The matrix in \eqref{eq:average-cost-matrix-proof} is obtained from the matrix
$\mathcal B_T$ in \eqref{eq:123455} by replacing
$\left(\beta, \frac{\bar L K_1}{\rho}\right)$ with $\left( \alpha \beta , \frac{\bar L K_1}{\rho}W\right)$.
The first condition in Theorem~\ref{thrm:avg} is equivalent to
\begin{equation}\label{eq:average-cost-square-condition}
\alpha\beta\left(
\sqrt{K_{\mathrm D}+\frac{\bar L K_1}{\rho}W}
+\sqrt{\frac{\bar L K_1}{\rho}W}
\right)^2<1.
\end{equation}
Moreover,
\begin{align*}
\alpha\beta K_{\mathrm D}+2\sqrt{\alpha\beta\frac{\bar L K_1}{\rho}W}
&=\alpha\beta\left(
\sqrt{K_{\mathrm D}+\frac{\bar L K_1}{\rho}W}
+\sqrt{\frac{\bar L K_1}{\rho}W}
\right)^2
\\&\quad
+2\sqrt{\alpha\beta\frac{\bar L K_1}{\rho}W}
\left[1-\sqrt{\alpha\beta}\left(
\sqrt{K_{\mathrm D}+\frac{\bar L K_1}{\rho}W}
+\sqrt{\frac{\bar L K_1}{\rho}W}
\right)\right]
\\
&\leq
\sqrt{\alpha\beta}\left(
\sqrt{K_{\mathrm D}+\frac{\bar L K_1}{\rho}W}
+\sqrt{\frac{\bar L K_1}{\rho}W}
\right)<1,
\end{align*}
where the inequality follows from
\[
2\sqrt{\alpha\beta\frac{\bar L K_1}{\rho}W}
\leq
\sqrt{\alpha\beta}\left(
\sqrt{K_{\mathrm D}+\frac{\bar L K_1}{\rho}W}
+\sqrt{\frac{\bar L K_1}{\rho}W}
\right).
\]
After the substitutions above, the strict phase condition places the matrix
in the second branch of Theorem~\ref{thm:BT_asymptotics}, regardless of whether
$K_{\mathrm D}<1$ or $K_{\mathrm D}\geq1$.  Therefore Theorem \ref{thm:BT_asymptotics} gives
\begin{equation}\label{eq:average-cost-perron-proof}
\rho(\mathcal B_T)\nearrow
\alpha\beta K_{\mathrm D}+2\sqrt{\alpha\beta\frac{\bar L K_1}{\rho}W}<1
\end{equation}
under
\[
\frac{\bar L K_1}{\rho}W>\alpha\beta(1-K_{\mathrm D})^2.
\]

By the Perron--Frobenius Theorem, there exists a vector with strictly positive
entries such that the transpose of the matrix in
\eqref{eq:average-cost-matrix-proof}, multiplied by this vector, equals
$\rho(\mathcal B_T)$ times the same vector. The scalar product with this vector
therefore defines a weighted norm under which the mapping
$\pmb Q\mapsto\pmb Q^{\pmb\mu^{\pmb Q}}$ is contractive. Since
$\mathcal C_{\alpha\beta}^{T+1}$ is invariant under this mapping, the Banach fixed-point
theorem yields a unique fixed point. Its forward state-measure flow and the
policies induced by its $Q$-functions give the unique finite-horizon MFE satisfying
$Q_0=H_1^{\mathrm{avg}}(Q_0,\mu_0)$. The iterates converge to this fixed point,
which proves the result.
\end{proof}

\end{document}